\crefname{hypothesis}{Hypothesis}{Hypotheses}
\newcommand{\beq}{\begin{equation}}
\newcommand{\eeq}{\end{equation}}
\newcommand{\ep}{{\epsilon}}
\newcommand{\del}{\delta}
\newcommand{\Om}{\Omega}
\def\cI{\mathcal{I}}
\def\cL{\mathcal{L}}
\def\cI{\mathcal{I}}
\def\cS{\mathcal{S}}
\def\R{\mathbb{R}}
\def\Z{\mathbb{Z}}
\def\nd{{\textnormal{d}}}
\begin{document}

\headers{RK Collocation for NONLOCAL DIFFUSION}{Yu Leng, Xiaochuan Tian, Nathaniel Trask, and John.T. Foster}

\title{Asymptotically compatible reproducing kernel collocation and meshfree integration for nonlocal diffusion 
}

\author{Yu Leng 
\thanks{Department of Petroleum and Geosystems Engineering, The University of Texas at Austin, Austin, TX 78712 (\email{yu-leng@utexas.edu}, \email{john.foster@utexas.edu}). The work of these authors is supported in part by the AFOSR MURI Center for Material Failure Prediction through Peridynamics (AFOSR Grant NO. FA9550-14-1-0073) and the SNL:LDRD academic alliance program.}
\and Xiaochuan Tian
\thanks{Department of Mathematics, University of California, San Diego, CA 92093
(\email{xctian@ucsd.edu}). The work of this author is supported in part by NSF  grant DMS-1819233 and DMS-2044945.}
\and Nathaniel Trask
\thanks{Center for Computing Research, Sandia National Laboratories, Albuquerque, NM
(\email{natrask@sandia.gov}). Sandia National Laboratories is a multimission laboratory managed and operated by National Technology and Engineering Solutions of Sandia, LLC., a wholly owned subsidiary of Honeywell International, Inc., for the U.S. Department of Energy’s National Nuclear Security Administration under contract DE-NA-0003525. This paper describes objective technical results and analysis. Any subjective views or opinions that might be expressed in the paper do not necessarily represent the views of the U.S. Department of Energy or the United States Government. }
\and John.T. Foster \footnotemark[1] }

\maketitle

\begin{abstract}
Reproducing kernel (RK) approximations are meshfree methods that construct shape functions from sets of scattered data. We present an asymptotically compatible (AC) RK collocation method for nonlocal diffusion models with Dirichlet boundary condition. 
The numerical scheme is shown to be convergent to both nonlocal diffusion and its corresponding local limit as nonlocal interaction vanishes. 
The analysis is carried out on a special family of rectilinear Cartesian grids for linear RK method with designed kernel support. 
The key idea for the stability of the RK collocation scheme is to compare the collocation scheme with the standard Galerkin scheme which is stable. 
In addition, assembling the stiffness matrix of the nonlocal problem requires costly computational resources because 
high order Gaussian quadrature is necessary to evaluate the integral. We thus provide a remedy to the problem by introducing a quasi-discrete nonlocal diffusion operator for which no numerical quadrature is further needed after applying the RK collocation scheme. The quasi-discrete nonlocal diffusion operator combined with RK collocation is shown to be convergent to the correct local diffusion problem by taking the limits of nonlocal interaction and spatial resolution simultaneously. The theoretical results are then validated with numerical experiments. We additionally illustrate a connection between the proposed technique and an existing optimization based approach based on generalized moving least squares (GMLS).

\end{abstract}

\begin{keywords}
  nonlocal diffusion, RK collocation, convergence analysis, stability, quasi-discrete nonlocal operator, meshfree integration, asymptotically compatible schemes
\end{keywords}

\begin{AMS}
82C21, 65R20, 65M70, 46N20, 45A05
\end{AMS}


\section{Introduction}

This work is motivated by the study of numerical solutions to linear nonlocal models and their local limits. Peridynamics (PD) is a nonlocal theory of continuum mechanics  \cite{Silling2000}. Unlike the classical theory, PD models are formulated using spatial integration instead of differentiation, making them well-suited for describing discontinuities such as fracture, material separation and failure. PD has been applied to hydraulic-fracture propagation problems \cite{Ouchi2017}, crack branching \cite{Bobaru2012}, damage progression in multi-layered glass \cite{Ha2010} and others. Linear PD models also share similarities with nonlocal diffusion model \cite{Du2012}. 
Rigorous mathematical analysis and a variety of numerical methods have been developed for PD and nonlocal diffusion models \cite{bobaru2016handbook,XChen2011,du2019nonlocal,Du2012,Du2013nonlocal,du2013posteriori,mengesha2014nonlocal,Seleson2016C,Silling2005,Tian2014,Trask2019}.
Nonlocal models introduce a length scale $\delta$, called the horizon, which takes into account nonlocal interactions. As $\delta$ goes to zero nonlocal interactions vanish and nonlocal models recover their local equivalents, provided that the limit is well-defined.  It is a common practice to couple $\delta$ with the mesh size $h$ in engineering applications, but some standard numerical methods may converge to wrong local limits \cite{Tian2013}. The investigation of local limits of numerical schemes is of fundamental importance, because it encodes the robustness of the numerical methods for nonlocal models with a changing parameter $\delta$. 

A mathematical framework of convergence is established for PD and nonlocal diffusion models in \cite{Du2012,Mengesha2014,mengesha2014nonlocal} and asymptotically compatible (AC) discretization is introduced in \cite{Tian2013,Tian2014}. The AC scheme allows the numerical solution of nonlocal equations to converge to both the nonlocal solutions for a fixed $\delta$ and also their local limits as $\delta$ goes to zero, independent of the mesh size $h$.
The study of AC schemes  has  since then been developed for various numerical methods and model problems \cite{chen2017asymptotically,chen2015selecting,Du2016,Yang2018,lee2019asymptotically,tian2017conservative,Trask2019,zhang2018accurate}. Finite element methods (FEM) for nonlocal equations are studied in \cite{Tian2013,Tian2014} and FEM with subspaces containing piecewise linear functions are shown to be AC. However, applying FEM to nonlocal problems is computationally prohibitive because the variational formulation of nonlocal equations involves  a double integral and costly  geometrically mesh intersection calculation  \cite{XChen2011,Xu2016b}. Further, the nonlocal kernels in PD models are often singular, which adds more complexity to the computation. Finite difference methods (FDM) do not need the evaluation of a double integral but require uniform grids to obtain both AC and discrete maximum principle at the same time \cite{Yang2018}. 
A meshfree discretization \cite{Silling2005} of PD equations is widely used in engineering applications due to its simplicity. This meshfree method uses a set of particles in the domain, each with a known volume, and it assumes constant fields in each nodal element. This method, however, suffers from large integration error leading to low order of convergence and it is not robust under the change of the horizon parameter. Later, more works \cite{Seleson2016,Yu2011,Trask2019} have been devoted to improve the integration error, but rigorous numerical analysis falls behind.  A reproducing kernel (RK) collocation approach is proposed and numerically studied in \cite{Pasetto2018}. However, the convergence and robustness of the method needs further investigation. 

The first motivation of this work is to provide a convergence analysis of RK collocation method for nonlocal diffusion models. Stability of collocation methods on integral equations is not a trivial task, due to the lack of a discrete maximum principle.
A helpful view is to compare collocation schemes with Galerkin schemes \cite{arnold1984asymptotic,arnold1983asymptotic,Costabel1992, hu2011perturbation, hu2011error}, for which stability comes naturally. In this work, we use the Fourier approach \cite{Costabel1992} and demonstrate that the Fourier symbol of the RK collocation scheme with linear interpolation order and suitable choice of $\delta$ for nonlocal diffusion can be bounded below by that of the standard Galerkin scheme on Cartesian grids. Consequently, we show that the collocation scheme is stable because the standard Galerkin approximation is uniformly stable (i.e. the stability constant does not depends on $\delta$). The consistency of the scheme is established using the approximation properties of the RK approximation, and it is shown that the truncation error is independent of the model parameter $\delta$. 
Therefore the proposed RK collocation method on nonlocal diffusion is AC.

Although the collocation scheme requires only a single integration to be performed for the evaluation of each nonzero entry in the stiffness matrix, it is still quite expensive, particularly for models with singular kernels. In practice, high-order Gauss quadrature rules are used to evaluate the integral \cite{XChen2011,Pasetto2018}. 
Therefore, the second goal of this work is to develop a practical numerical method for nonlocal models. To this end, we introduce a quasi-discrete nonlocal diffusion operator which replaces the integral with a finite summation of quadrature points inside the horizon. We utilize the RK technique to calculate the quadrature weights.  The quasi-discrete nonlocal diffusion operator discretized with the collocation scheme saves computational cost and it could potentially be useful for fracture problems involving bond breaking \cite{Ha2010}. 
A similar technique has been proposed in \cite{Trask2019} utilizing an optimization construction which admits interpretation as a generalized moving least squares (MLS) process. It is well known that RK and MLS shape functions are equivalent, up to a rescaling of the weighting function and for particular reproducing spaces \cite{chen2017meshfree}. We will show that the construction of quadrature weights using the RK technique is similarly equivalent under certain conditions to this generalized MLS approach \cite{Trask2019}, and therefore the stability proof provided here applies equally to this second class of schemes which currently lack a proof of stability. This unifies existing work in the literature using both RK \cite{Pasetto2018} and MLS \cite{Trask2019} as a framework to develop AC particle-based schemes.

This paper is organized as follows. In \cref{sec:NonlocalEquation}, we introduce the nonlocal diffusion model equations with Dirichlet boundary conditions. In \cref{sec:RKCollocation}, we present the RK collocation method  and work on linear interpolation order with special choices of RK support sizes.  \Cref{sec:CAnalysis} discusses the convergence of the 
RK collocation method 
to both the nonlocal diffusion equation for a fixed $\delta$ and the local diffusion equation as $\delta$ goes to zero. As a result, the RK collocation scheme is AC. Then, a quasi-discrete nonlocal diffusion operator is developed in \cref{sec:QuasiDiscrete} and its convergence analysis is presented in \cref{sec:DCAnalysis}.  \Cref{sec:NumericalExample} gives numerical examples to complement our theoretical analysis. Finally, conclusions are made in \cref{sec:Conclusion}.

\section{Nonlocal diffusion operator and model equation} \label{sec:NonlocalEquation}
We use the following notation throughout the paper. The spatial dimension is denoted as d, which is a positive integer. A generic point $\bm{x} \in \mathbb{R}^{\nd}$ is expressed as $\bm{x}=(x_1, \ldots, x_{\nd})$. A multi-index is a collection of d nonnegative integers, $\bm{\alpha} = (\alpha_1, \ldots, \alpha_{\nd})$ and its length is $|\bm{\alpha}| = \sum_{i=1}^{\nd}\alpha_i$. For a given $\bm{\alpha}$, we write $\bm{x}^{\bm{\alpha}}=x_1^{\alpha_1}\ldots x_{\nd}^{\alpha_{\nd}}$. Let $\Omega \subset \mathbb{R}^{\nd}$ be a bounded, open domain. The corresponding interaction domain is then defined as  
\[
\Omega_{\cI} = \{ \bm{x} \in \mathbb{R}^{\nd} \backslash \Omega :  \text{dist}(\bm{x}, \Omega) \leq \delta\}\, ,
\] 
and let $\Omega_{\delta} = \Omega \cup \Omega_{\cI}$. Following the same notations as in \cite{Du2012}, we define the nonlocal diffusion operator  $\mathcal{L}_{\delta}$,
for a given $u(\bm{x}): \Omega_{\delta} \rightarrow \mathbb{R}$, as 
\begin{equation}  \label{eqn:NonlocalOper}
\mathcal{L}_{\delta} u(\bm{x}) = \int_{\Omega_\delta}  \rho_{\delta}(\bm{x},\bm{y})(u(\bm{y})-u(\bm{x}))d\bm{y}, \quad \forall \, \bm{x} \in \Omega,
\end{equation}
where $\delta$ is the nonlocal length and $\rho_{\delta} (\bm{x},\bm{y})$ is the nonlocal diffusion kernel which is nonnegative and symmetric, i.e., $\rho_{\delta}(\bm{x}, \bm{y}) = \rho_{\delta}(\bm{y}, \bm{x})$. Let us consider a nonlocal diffusion problem with homogeneous Dirichlet volumetric constraint, 

\begin{equation}  \label{eqn:NonlocalEqn}
\begin{cases}
-\mathcal{L}_{\delta}u = {f_\delta}, & \textnormal{in } \Omega, \\
 \quad \quad  u = 0, &  \textnormal{on } \Omega_{\cI}.
\end{cases}
\end{equation}
{Notice that we allow the source data to be dependent on $\delta$.}

In this work, we study the kernels of radial type, i.e., $\rho_{\delta}(\bm{x},\bm{y}) = \rho_{\delta}(|\bm{x}-\bm{y}|)$. In addition, we assume
\begin{equation} \label{eqn:NonlocalKernelScaling}
\rho_{\delta}(|\bm{s}|)=\frac{1}{\delta^{{\nd}+2}}\rho \left(\frac{|\bm{s}|}{\delta} \right),
\end{equation}
where $\rho (|\bm{s}|)$ is compactly supported in $B_{1}(\bm{0})$ (the unit ball about $\bm{0}$).  We further assume $\rho (|\bm{s}|)$ is a non-increasing function  and  has a bounded second-order moment, i.e.
\begin{equation}  \label{eqn:boundedmoment}
\int_{B_{\delta}(\bm{0})}\rho_{\delta}(|\bm{s}|)|\bm{s}|^2 d\bm{s} = \int_{B_{1}(\bm{0})}\rho(|\bm{s}|)|\bm{s}|^2 d\bm{s} =2 {\nd}.
\end{equation}
The local limit of $\mathcal{L}_{\delta}$ is denoted as $\mathcal{L}_0$ when $\delta \rightarrow 0$. We are interested in particular cases where $\mathcal{L}_0 = \Delta$, such that \cref{eqn:NonlocalEqn}  goes to  
\begin{equation}  \label{eqn:LocalEqn}
\begin{cases}
-\mathcal{L}_{0}u = {f_0}, & \textnormal{in } \Omega, \\
 \quad \quad u = 0, & \textnormal{on } \partial \Omega.
\end{cases}
\end{equation}
In order for \cref{eqn:NonlocalEqn} to be convergent to \cref{eqn:LocalEqn} as $\del\to0$, we need the consistency of the source data. Here and in the rest of the paper, we assume that $f_\delta$ converges to $f_0$ uniformly in second order, {\it i.e.}, 
\beq \label{eqn:2ndorderassump}
\max_{\bm{x}\in\Om} |f_\delta(\bm{x}) - f_0(\bm{x})| = O(\delta^2) \,.
\eeq

We proceed to define some functional spaces. The natural energy space and the 
constrained energy space are defined as 
\begin{equation} \notag
\mathcal{S}_{\delta} :=\left \{ u \in L^2(\R^\nd): \int_{\R^\nd} \int_{\R^\nd}\rho_{\delta}(|\bm{y}-\bm{x}|)|u(\bm{y})-u(\bm{x})|^2d\bm{y} d\bm{x} < \infty \right \} 
\end{equation}
and 
\[
\cS_{c,\del}: =\{ u\in \cS_\del: u(\bm{x})=0,\, \forall \bm{x} \in \R^\nd \backslash\Om\} 
\]
respectively. 
The nonlocal diffusion problem  \cref{eqn:NonlocalEqn} is well-posed with weak solutions in the constrained energy space $\cS_{c,\del}$. The well-posedness of the problem is a result of Lax-Milgram theorem and the nonlocal Poincar\'e inequality established in \cite{Du2013nonlocal, Mengesha2014}. 
The following uniform stability is a result of the uniform nonlocal Poincar\'e inequality shown in \cite{Mengesha2014}.
\begin{lemma} \label{thm:nonlocalmapping}
\textbf{(Uniform stability)} Assume that $\widetilde{\Om}\subset \R^\nd$ is an open bounded and connected domain and $\delta \in (0, \delta_0]$ for some $\delta_0 > 0$. The bilinear form $(-\mathcal{L}_{\delta}u, u)$ is an inner product and  for any $u\in \mathcal{S}_{\delta}$ with $u|_{\R^\nd\backslash\widetilde{\Om}} = 0$, we have 
\begin{equation} \notag 
|(-\mathcal{L}_{\delta}u, u)| \geq C \| u \|^2_{L^2(\R^\nd)}\,,
\end{equation} 
where $C$ is a constant that only depends on $\widetilde{\Om}$ and $\delta_0$.
\end{lemma}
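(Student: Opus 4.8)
The plan is to reduce the estimate to the uniform nonlocal Poincar\'e inequality of \cite{Mengesha2014} by first recasting the bilinear form as a nonlocal Dirichlet energy. I extend $u$ by zero outside $\widetilde{\Om}$, so that, thanks to the compact support of $\rho_\del$ in $B_\del(\bm{0})$, the operator and the $L^2$ inner product may be written over all of $\R^\nd$ without changing their values. Applying Fubini's theorem and exploiting the symmetry $\rho_\del(|\bm{x}-\bm{y}|)=\rho_\del(|\bm{y}-\bm{x}|)$ --- concretely, by swapping the roles of $\bm{x}$ and $\bm{y}$ in one copy of the double integral and averaging --- I would obtain the polarization identity
\begin{equation} \notag
(-\mathcal{L}_\del u, v) = \frac{1}{2}\int_{\R^\nd}\int_{\R^\nd} \rho_\del(|\bm{y}-\bm{x}|)\,\big(u(\bm{y})-u(\bm{x})\big)\big(v(\bm{y})-v(\bm{x})\big)\, d\bm{y}\, d\bm{x}\,,
\end{equation}
and in particular, taking $v=u$, the quadratic form equals the (nonnegative) nonlocal energy seminorm.

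From the polarized form the bilinearity and symmetry of $(-\mathcal{L}_\del u, v)$ are immediate, so it remains only to verify positive definiteness to conclude it is an inner product; this will follow at once from the lower bound, since $(-\mathcal{L}_\del u,u)\geq C\|u\|^2_{L^2(\R^\nd)}$ forces $u\equiv 0$ whenever the form vanishes.

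For the quantitative bound I would invoke the uniform nonlocal Poincar\'e inequality of \cite{Mengesha2014}, which provides a constant $C_P = C_P(\widetilde{\Om},\delta_0)$, independent of $\del\in(0,\delta_0]$, such that
\begin{equation} \notag
\|u\|^2_{L^2(\R^\nd)} \leq C_P \int_{\R^\nd}\int_{\R^\nd} \rho_\del(|\bm{y}-\bm{x}|)\,|u(\bm{y})-u(\bm{x})|^2\, d\bm{y}\, d\bm{x}
\end{equation}
for all $u\in\mathcal{S}_\del$ vanishing outside $\widetilde{\Om}$. Combining this with the energy identity gives $(-\mathcal{L}_\del u,u)\geq (2C_P)^{-1}\|u\|^2_{L^2(\R^\nd)}$, establishing the claim with $C=(2C_P)^{-1}$.

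The genuinely hard ingredient is the uniformity of $C_P$ with respect to $\del$ --- an ordinary Poincar\'e constant could in principle degenerate as $\del\to 0$ --- and this is exactly the content of the cited result, whose proof rests on a compactness argument for the family of nonlocal energies together with their convergence to the local Dirichlet energy. In the present write-up I would simply cite it; the remaining work is the symmetrization identity and the algebraic rearrangement, for which the only care needed is the domain bookkeeping that legitimizes passing from the integration set $\Omega_\del$ to all of $\R^\nd$ using the kernel's compact support and the zero extension of $u$.
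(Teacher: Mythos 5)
Your proposal is correct and matches the paper's intent: the paper states this lemma without a written proof, attributing it directly to the uniform nonlocal Poincar\'e inequality of \cite{Mengesha2014}, which is exactly the ingredient you invoke after the standard symmetrization of the bilinear form into the nonlocal energy seminorm. The polarization identity and the domain bookkeeping you supply are routine and correct, so there is nothing to add.
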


At last, we remark that we write \cref{eqn:NonlocalEqn} and  \cref{eqn:LocalEqn} as homogeneous Dirichlet boundary problems for the convenience of exposition. 
In fact, non-homogeneous Dirichlet problems can be easily cast into homogeneous ones given by \cref{eqn:NonlocalEqn} and  \cref{eqn:LocalEqn}. Indeed, assume that we have the following non-homogeneous Dirichlet problems
\beq \label{eqn:nonhomogeneuos}
\begin{cases}
-\mathcal{L}_{\delta}u ={f_\delta} & \textnormal{in } \Omega \\
 \quad \quad  u = g &  \textnormal{on } \Omega_{\cI}
\end{cases}
\qquad \text{and } \qquad 
\begin{cases}
-\mathcal{L}_{0}u = f_0 & \textnormal{in } \Omega \\
 \quad \quad u = g & \textnormal{on } \partial \Omega.
\end{cases}
\eeq
Assume that the boundary data $g$ can be smoothly extended to the domain $\Om\cup \Om_{\cI}$, {\it i.e.}, there exists $w \in C^4(\overline{\Om\cup \Om_{\cI}})$ such that $w|_{{\Om_{\cI}}}=g$. By letting $v=u-w$, we can rewrite \cref{eqn:nonhomogeneuos} into homogeneous Dirichlet problems
\beq \label{eqn:nonhomogeneuous_trans}
\begin{cases}
-\mathcal{L}_{\delta}v = {f_\delta} + \mathcal{L}_{\delta} w & \textnormal{in } \Omega \\
 \quad \quad  v = 0 &  \textnormal{on } \Omega_{\cI}
\end{cases}
\qquad \text{and } \qquad 
\begin{cases}
-\mathcal{L}_{0}v = f_0 +\mathcal{L}_{0}w & \textnormal{in } \Omega \\
 \quad \quad v = 0 & \textnormal{on } \partial \Omega.
\end{cases}
\eeq
Notice that the source data in \cref{eqn:nonhomogeneuous_trans} still satisfies the uniform second order convergence assumption, since for $w\in C^4(\overline{\Om\cup \Om_{\cI}})$, we have 
$|\mathcal{L}_{\delta} w(\bm{x})- \mathcal{L}_0 w(\bm{x})|=O(\del^2)$. 
Therefore in the rest of the paper, we assume the homogeneous boundary  conditions in \cref{eqn:NonlocalEqn} and  \cref{eqn:LocalEqn} together with the consistency assumption \cref{eqn:2ndorderassump} of the source data.

\section{RK collocation method} \label{sec:RKCollocation}

We first introduce some notations.  
Define  $\square$ to be a rectilinear Cartesian grid on $\mathbb{R}^{\nd}$, namely 
\begin{equation}  \notag
\square := \{ \bm{x_k} :=  \bm{k} \odot \bm{h} \mid \bm{k} \in \mathbb{Z}^{\nd} \},
\end{equation}
where $ \bm{k}=(k_1,k_2,\ldots,k_\nd)$ and $\bm{h}=(h_1, h_2, \ldots, h_{\nd}) $ consists of discretization parameters in each dimension and $\odot$ denotes component-wise multiplication, i.e.,
\begin{equation} \notag 
\bm{k} \odot \bm{h} = (k_1h_1, k_2h_2, \ldots, k_{\nd}h_{\nd}).
\end{equation}
Sometime we also write the $j$-th component of $\bm{x_k}$ by $x_{k_j} $, which is equal to $k_j h_j$ by definition. 
We  introduce a component-wise division symbol $\oslash$: 
\begin{equation} \notag 
\bm{k} \oslash \bm{h} = \left(\frac{k_1}{h_1}, \frac{k_2}{h_2}, \ldots, \frac{k_{\nd}}{h_{\nd}} \right).
\end{equation}
Note that the grid size $h_j$ can be different for different $j$. For instance, in two dimension, rectangular grids are allowed. Nonetheless, we assume the grid $\square$ is quasi-uniform such that $\bm{h}$ can also be written as
\begin{equation} \label{eqn:GridVector} 
\bm{h} =  h_{\max} \bm{\hat{h}}\,,
\end{equation}
with $\bm{\hat{h}}$ being a fixed vector with the maximum component to be 1. 
For any continuous function $u(\bm{x})$, define the restriction to $\square$ by
\begin{equation} 
r^hu : = (u(\bm{x_k}))_{\bm{k}\in \mathbb{Z}^{\nd}},
\end{equation}
and the restriction to ($\square \cap \Omega$) as 
\begin{equation} \label{restriction_omg}
r^h_{\Omega} u  := (u(\bm{x_k})), \quad \bm{x}_{\bm{k}} \in (\square \cap \Omega),
\end{equation}
where $\square \cap \Omega$ is the collection of grid points that only reside in $\Omega$.
For any sequence $(u_{\bm{k}})_{\bm{k}\in\mathbb{Z}^{\nd}}$ on $\mathbb{R}$, the RK interpolant operator is given as 
\begin{equation} \notag
i^h(u_{\bm{k}}) : = \sum_{\bm{k} \in \mathbb{Z}^{\nd}}  \Psi_{\bm{k}}(\bm{x}) u_{\bm{k}},
\end{equation}
where $\Psi_{\bm{k}}(\bm{x})$ is the RK basis function to be introduced shortly. Denote by $S(\square)$ the trial space equipped with the RK basis  $\Psi_{\bm{k}}(\bm{x})$ on $\square$, i.e., $S(\square) =\textnormal{span}\{\Psi_{\bm{k}}(\bm{x}) \mid \bm{k} \in \mathbb{Z}^{\nd}\} $. 
Let
\begin{equation} \notag 
\Pi^h := i^hr^h
\end{equation}
be the interpolation projector from the space of continuous functions on $\mathbb{R}^\nd$ to the trial space $S(\square)$.

We proceed to recall the construction of the RK basis function. The RK approximation \cite{MP:Liu1995} of $u(\bm{x}) : \mathbb{R}^{\nd} \to \mathbb{R}$ on $\square$ is formulated as:
\begin{equation} \label{eqn:DiscUh}
\Pi^h u(\bm{x}) =  \sum\limits_{\bm{k} \in \mathbb{Z}^{\nd}} C(\bm{x},\bm{x}-\bm{x_k}) \bm{\phi_a}(\bm{x}-\bm{x_k}) u(\bm{x_k}),
\end{equation}
where $C(\bm{x}; \bm{x}-\bm{y})$ is the correction function, $u(\bm{x_k})$ is the nodal coefficient, and $\bm{\phi_a}(\bm{x}-\bm{y})$ is the kernel function defined as the tensor product of kernel functions in each dimension with support $\bm{a}$, i.e.
\begin{equation} \label{eqn:CartesianWindow}
\bm{\phi_a}(\bm{x}-\bm{y}) \equiv \prod^{\nd}_{j=1}\phi_{a_j}(x_j-y_j)=\prod^{\nd}_{j=1} \phi \left( \frac{|x_j-y_j|}{a_j} \right),
\end{equation}
where $\phi_{a_j}(x_j) $ is the kernel function in the $j$-th dimension, $a_j$ is the support size for $\phi_{a_j}(x_j)$ and $\phi(x)$ is called the window function. In this work, we use the cubic B-spline function as the window function, i.e.,
\begin{equation}  \label{eqn:CubicSpline}
\phi(x) = 
\begin{cases}
\frac{2}{3}-4x^2+4x^3 , \quad & 0 \leq x \leq \frac{1}{2}, \\
\frac{4}{3}(1-x)^3, & \frac{1}{2} \leq x \leq1, \\
0, & \textnormal{otherwise}.
\end{cases}
\end{equation}
The correction function $C(\bm{x}; \bm{x}-\bm{y})$ in \cref{eqn:DiscUh} is defined as
\begin{equation} \label{eqn:Correction}
C(\bm{x};\bm{x-y})=\bm{H}^T(\bm{x}-\bm{y}) \bm{b}(\bm{x}),
\end{equation}
where the vector $\bm{H}^T(\bm{x}-\bm{y})$ consists of the set of monomial basis functions of order $p$,
\begin{equation} \label{eqn:monomial}
\bm{H}^T(\bm{x}-\bm{y})=[\{(\bm{x}-\bm{y})^{\bm{\alpha}} \}_{|\bm{\alpha}| \leq p}],
\end{equation}
$\bm{b}(\bm{x})$ is a vector containing correction function coefficients and can be obtained by satisfying the  $p$-th order polynomial reproduction condition,
\begin{equation}  \label{eqn:DiscRepCond}
\sum\limits_{\bm{k} \in \mathbb{Z}^{\nd}} C(\bm{x};\bm{x}-\bm{x_k}) \bm{\phi_{a}}(\bm{x}-\bm{x_k})  \bm{x_k^{\alpha}} = \bm{x^{\alpha}}, \quad |\bm{\alpha}| \leq p.
\end{equation}
Substitute \cref{eqn:Correction} into \cref{eqn:DiscRepCond} and obtain
\begin{equation}  \notag
\sum\limits_{\bm{k} \in \mathbb{Z}^{\nd}} C(\bm{x};\bm{x}-\bm{x_k}) \bm{\phi_{a}}(\bm{x}-\bm{x_k}) \bm{H}(\bm{x}-\bm{x_k})=\bm{H}(\bm{0}).
\end{equation}
Equivalently, 
\begin{equation}  \label{eqn:SOEcorrection}
\bm{M}(\bm{x}) \bm{b}(\bm{x})=\bm{H}(\bm{0}),
\end{equation}
where $\bm{M}(\bm{x})$ is the moment matrix and is formulated as
\begin{equation}  \label{eqn:eq:DiscMomentMatrix}
\bm{M}(\bm{x}) = \sum\limits_{\bm{k} \in \mathbb{Z}^{\nd}} \bm{H}(\bm{x}-\bm{x_k}) \bm{\phi_{a}}(\bm{x}-\bm{x_k})  \bm{H}^T(\bm{x}-\bm{x_k}). 
\end{equation}
Each entry of the matrix is a moment given by
\begin{equation} \label{eqn:Moment}
\bm{m_{\alpha}}(\bm{x}) = \sum_{\bm{k} \in \mathbb{Z}^{\nd}}\bm{\phi_a}(\bm{x}-\bm{x_k})(\bm{x}-\bm{x_k})^{\bm{\alpha}} =\prod^{\nd}_{j=1}m_{\alpha_j}(x_j).
\end{equation}
where $\bm{x}= (x_1,x_2,\ldots,x_\nd)$, $\bm{x}_{\bm{k}}= (x_{k_1},x_{k_2},\ldots,x_{k_\nd})$, and $m_{\alpha_j}(x_j)$ is the $\alpha_j$-th discrete moment in the $j$-th dimension given as
\begin{equation}  \label{eqn:eq:DisMoment}
m_{\alpha_j}(x_j)= \sum\limits_{k_j \in \mathbb{Z}} \phi_{a_j}(x_j- x_{k_j})(x_j-x_{k_j})^{\alpha_j} . 
\end{equation}
Solve the system of equations as in \cref{eqn:SOEcorrection} and obtain the correction function coefficients as
\begin{equation}  \label{eqn:DiscB}
\bm{b}(\bm{x})= (\bm{M}(\bm{x}))^{-1}  \bm{H}(\bm{0}).
\end{equation}
Please refer to \cite{Han2001} for the necessary conditions of the solvability of the system \cref{eqn:SOEcorrection}. 
Finally, by substituting \cref{eqn:DiscB} and \cref{eqn:Correction} into  \cref{eqn:DiscUh}, the RK approximation of $u(\bm{x})$ is obtained as
\begin{equation} \notag 
\Pi^h u(\bm{x}) = \sum\limits_{\bm{k} \in \mathbb{Z}^{\nd}} \Psi_{\bm{k}}(\bm{x})u(\bm{x_k}),
\end{equation}
where $\Psi_{\bm{k}}(\bm{x}) $ is the RK basis function, 
\begin{equation} 
\Psi_{\bm{k}}(\bm{x})=C(\bm{x};\bm{x}-\bm{x_k})\bm{\phi_{a}}(\bm{x}-\bm{x_k}) =\bm{H}^T(\bm{x}-\bm{x_k}) (\bm{M}(\bm{x}))^{-1} \bm{H}(\bm{0}) \bm{\phi_{a}}(\bm{x}-\bm{x_k}).
\end{equation}

In the rest of the work, we assume the reproducing condition \eqref{eqn:DiscRepCond} is satisfied with $p=1$,
with which we call our method the linear RK approximation and the RK basis function is referred to as the linear RK basis. 
Let $\bm{a}=2 \bm{h}$, then it can be shown (see {\it e.g.},\cite{Leng2019a}) that the correction function $C(\bm{x};\bm{x}-\bm{x_k})\equiv 1$ and the linear RK basis function is reduced to
\begin{equation}   \label{eqn:RKShape}
\Psi_{\bm{k}}(\bm{x})=\bm{\phi_a}(\bm{x}-\bm{x_k})=\prod^{\nd}_{j=1}\phi_{a_j}(x_j- x_{k_j})\,.
\end{equation}
Another consequence of this choice of support size is that the one-dimensional moments up to the third order are independent of $x_j$, and
more precisely  
\begin{equation} \label{eqn:MomentVal}
m_0(x_j) = 1, \quad 
m_1(x_j) = 0, \quad 
m_2(x_j) = \frac{h_j^2}{3}, \quad 
m_3(x_j) = 0\,, 
\end{equation}
for $j=1, \ldots, {\nd}$.
From the one-dimensional moment, we can derive useful properties of the multi-dimensional moment which are summarized in the following lemma.
\begin{lemma} \label{lem:MultiMoment}
Let $\bm{a}=2\bm{h}$, then the multi-dimensional moments satisfy the following properties,
\begin{enumerate}[label=(\roman*)]
\item $\bm{m_0} =1$ and $\bm{m}_{\bm{\alpha}}=0$ for $|\bm{\alpha}|=1 \textnormal{ or } 3 $,
\item $\bm{m_{\alpha}} = 0 \textnormal{ or } m_2(x_j) $ for $|\bm{\alpha}|=2$ and $ j =1, \ldots, {\nd} $, 
\end{enumerate}
\end{lemma}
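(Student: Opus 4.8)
The plan is to exploit the tensor-product structure already recorded in the second equality of \cref{eqn:Moment}, namely the factorization $\bm{m}_{\bm{\alpha}}(\bm{x}) = \prod_{j=1}^{\nd} m_{\alpha_j}(x_j)$, and then simply to read each factor off the one-dimensional values listed in \cref{eqn:MomentVal}. The decisive feature of those values is that the odd-order moments vanish, $m_1(x_j) = m_3(x_j) = 0$, whereas $m_0(x_j) = 1$ and $m_2(x_j) = h_j^2/3$. Consequently the whole statement reduces to a short combinatorial case analysis on how the total degree $|\bm{\alpha}|$ can be distributed among the components $\alpha_1, \dots, \alpha_{\nd}$.

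First I would justify the factorization, which is immediate: since $\bm{\phi_a}(\bm{x}-\bm{x_k}) = \prod_{j} \phi_{a_j}(x_j - x_{k_j})$ by \cref{eqn:CartesianWindow} and $(\bm{x}-\bm{x_k})^{\bm{\alpha}} = \prod_j (x_j - x_{k_j})^{\alpha_j}$, the sum over $\bm{k}\in\Z^{\nd}$ in \cref{eqn:Moment} separates into a product of independent one-dimensional sums, each of which is exactly the discrete moment $m_{\alpha_j}(x_j)$ of \cref{eqn:eq:DisMoment}.

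With this in hand, part (i) is dispatched by cases on $|\bm{\alpha}|$. For $|\bm{\alpha}|=0$ every component is zero, so $\bm{m}_{\bm{0}} = \prod_j m_0(x_j) = 1$. For $|\bm{\alpha}|=1$ exactly one component equals $1$ and the rest vanish, producing a factor $m_1(x_j) = 0$. For $|\bm{\alpha}|=3$ the nonzero components must realize one of the degree patterns $(3)$, $(2,1)$ or $(1,1,1)$; in each an odd-order factor ($m_3$ or $m_1$) occurs, so the product is $0$. Part (ii) follows from the same bookkeeping: when $|\bm{\alpha}|=2$ the nonzero components realize either the pattern $(2)$ or $(1,1)$; in the first case a single factor $m_2(x_j)$ survives while every other factor equals $m_0 = 1$, giving $\bm{m}_{\bm{\alpha}} = m_2(x_j)$, and in the second case two factors $m_1 = 0$ appear and $\bm{m}_{\bm{\alpha}} = 0$.

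I do not expect a genuine obstacle, since the lemma is a direct corollary of the vanishing of the odd one-dimensional moments. The only point demanding care is completeness of the case analysis: I must be sure to include the ``mixed'' degree patterns such as $(2,1)$ and $(1,1,1)$ for $|\bm{\alpha}|=3$ and $(1,1)$ for $|\bm{\alpha}|=2$, rather than treating only the pure patterns in which a single component carries the entire degree.
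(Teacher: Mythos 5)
Your proposal is correct and follows exactly the route the paper intends: factor $\bm{m}_{\bm{\alpha}}(\bm{x})=\prod_j m_{\alpha_j}(x_j)$ as in \cref{eqn:Moment}, substitute the one-dimensional values from \cref{eqn:MomentVal}, and enumerate the degree patterns of $\bm{\alpha}$. The paper's own proof is just a one-line version of this same argument, so your write-up is simply a more explicit rendering of it.
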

\begin{proof}
By writing out the multi-index $\bm{\alpha}$ and from \cref{eqn:Moment} and \cref{eqn:MomentVal}, the desired properties follow.
\end{proof} 

\begin{remark}\label{rem:RKsupport}
In general, we can choose the RK support as $\bm{a}=2r_0\bm{h}$ for $r_0\in \mathbb{N}$. In this case it is shown in \cite[Lemma 4.4]{Leng2019a} that $\bm{\phi_a}$ satisfies the Strang-Fix condition (\cite{strang2011fourier}), and therefore it can be shown that the moments are constants and they satisfy the same properties in \cref{lem:MultiMoment} (\cite{Leng2019a}). In the case $p=1$, it also implies that the correction function $C(\bm{x};\bm{x}-\bm{x_k})\equiv C$ for some constant $C$. These properties are sufficient to guarantee a special synchronized convergence property (\cite{Li1996}) of RK approximation that is crucial for the consistency analysis in \cref{subsec:Convergence}. For the simplicity of presentation, we assume $r_0=1$ in this paper but the analysis also works for any $r_0 \in\mathbb{N}$.
\end{remark}

Now we use the above discussed RK approximation and collocate the nonlocal diffusion equation on the grid $\square$.
 The RK collocation scheme is formulated as follows.
Find a function $u \in S \left(\square \cap {\Omega}  \right) $ such that
\begin{equation} \label{eqn:CollocationScheme}
-\mathcal{L}_{\delta} u(\bm{x_k}) =  {f_\del}(\bm{x_k}), \, \quad \bm{x}_{\bm{k}} \in (\square \cap \Omega),
\end{equation}
where $S \left(\square \cap {\Omega} \right) $ is defined as
\begin{equation} \notag 
S \left(\square \cap {\Omega}  \right) := \left \{ u = \sum_{\bm{k} \in \mathbb{Z}^{\nd}}  \Psi_{\bm{k}} u_{\bm{k}} \, \big| \, u_{\bm{k}} = 0 \textnormal{ for such } \bm{k}  \textnormal{ that }  \bm{x_k} \notin \left(\square \cap {\Omega} \right)  \right\}.
\end{equation} 
Alternatively, \cref{eqn:CollocationScheme} can also be written as 
\begin{equation} 
- r^h_{\Omega}\mathcal{L}_{\delta} u = r^h_{\Omega} {f_\del} \, ,
\end{equation}
where $r^h_{\Omega}$ is the restriction operator given in \cref{restriction_omg}.

It is worth nothing that 
with the assumption $\bm{a}=2\bm{h}$, the RK basis has support size of $2h_j$ in the $j$-th dimension. So the support of $ u \in S \left(\square \cap {\Omega} \right)$ is not fully contained in ${\Omega}$ but in
a larger domain given as
 \[\widehat{\Omega} = (-2h_1, 1+2h_1) \times (-2h_2, 1+2h_2) \times \cdots \times (-2h_{\nd}, 1+2h_{\nd}). \]


\section{Convergence analysis of the RK collocation method} \label{sec:CAnalysis}
In this section, we will show the convergence of the RK collocation scheme \cref{eqn:CollocationScheme},
which is also the method used in \cite{Pasetto2018} without a convergence proof. 
The concern for convergence is that the numerical scheme should converge to the nonlocal problem for a fixed $\delta$,
and to the correct local problem as $\delta$ and grid size both go to zero. 
 So the proposed RK collocation scheme is an AC scheme (\cite{Tian2014}).

\subsection{Stability of the RK collocation method} \label{subsec:Stability}
 In this subsection, we provide the stability proof of our method.
 The key idea is to compare the RK collocation scheme with the Galerkin scheme using Fourier analysis. 
Similar strategies have been developed in \cite{Costabel1992}.

First, define a norm in the space of sequences by 
\begin{equation} \label{eqn:l2norm}
|(u_{\bm{k}})_{\bm{k} \in {\mathbb{Z}^{\nd}}}|_h := \| i^h(u_{\bm{k}})  \|_{L^2(\mathbb{R}^{\nd})} \,\, .
\end{equation}
If a sequence $(u_{\bm{k}})$ is only defined for $\bm{k}$ in a subset of $\mathbb{Z}^{\nd}$, then one can always use zero extension for $(u_{\bm{k}})$ so that it is defined  for all $\bm{k} \in \mathbb{Z}^{\nd}$. 
Then without further explanation,
$|(u_{\bm{k}}) |_h $  is always understood as \eqref{eqn:l2norm} 
with the zero extension being used. 
 The main theorem in this subsection is now given as follows. 
\begin{theorem} \label{thm:stability}
\textbf{(Stability I)} For any $\delta \in(0, \delta_0]$ and $u \in S(\square \cap {\Omega})$, we have
\begin{equation} \notag
\left|r^h_{\Omega}(-\mathcal{L}_{\delta} u)\right|_{h} \geq C \| u\| _{L^2(\mathbb{R}^{\nd})}\,,
\end{equation}
where $C$ is a constant that only depends on $\Omega$ and $\delta_0$.
\end{theorem}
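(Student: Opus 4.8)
The plan is to reduce the claimed lower bound to the coercivity of the continuous bilinear form supplied by \cref{thm:nonlocalmapping}, using a Fourier symbol comparison to bridge the collocation residual and the Galerkin form. Writing $u=\sum_{\bm k}u_{\bm k}\Psi_{\bm k}=i^h(u_{\bm k})$ so that $\|u\|_{L^2(\R^\nd)}=|(u_{\bm k})|_h$, it suffices to prove the Cauchy--Schwarz--type estimate
\begin{equation}\notag
(-\mathcal{L}_{\delta}u,\,u)\ \le\ C_1\,\bigl|r^h_{\Omega}(-\mathcal{L}_{\delta}u)\bigr|_h\,\|u\|_{L^2(\R^\nd)}\,,
\end{equation}
for then \cref{thm:nonlocalmapping} (applied with $\widetilde\Om\supset\widehat\Om$) gives $C\|u\|_{L^2}^2\le(-\mathcal{L}_\delta u,u)\le C_1|r^h_\Omega(-\mathcal{L}_\delta u)|_h\|u\|_{L^2}$, and dividing by $\|u\|_{L^2}$ finishes the proof with constant $C/C_1$.

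To obtain this estimate I would pass to the semidiscrete Fourier transform on $\square$. There the Gram operator $G$ (defined by $|(v_{\bm k})|_h^2=\langle Gv,v\rangle$ for the $\ell^2$ pairing $\langle\cdot,\cdot\rangle$), the Galerkin operator $\cA_G:(u_{\bm k})\mapsto\bigl((-\mathcal{L}_\delta u,\Psi_{\bm j})\bigr)_{\bm j}$, and the collocation operator $\cA_C:(u_{\bm k})\mapsto\bigl((-\mathcal{L}_\delta u)(\bm x_{\bm j})\bigr)_{\bm j}$ are all convolutions, hence simultaneously diagonalized. By Poisson summation their symbols $p(\bm\theta),a_G(\bm\theta),a_C(\bm\theta)$ are the $\bm h$--periodizations of $\widehat{\bm\phi}_{\bm a}^{\,2}$, of $\lambda_\delta\,\widehat{\bm\phi}_{\bm a}^{\,2}$, and of $\lambda_\delta\,\widehat{\bm\phi}_{\bm a}$, where $\lambda_\delta(\bm\xi)=\int_{\R^\nd}\rho_\delta(|\bm z|)(1-\cos(\bm\xi\cdot\bm z))\,d\bm z\ge0$ is the symbol of $-\mathcal{L}_\delta$ and $\widehat{\bm\phi}_{\bm a}$ is the transform of the RK kernel \cref{eqn:RKShape}. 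The heart of the matter is the pointwise inequality $a_G(\bm\theta)\le C_2\,a_C(\bm\theta)\,p(\bm\theta)$, uniform in $\bm h$ and $\delta$. I would derive it in two clean steps using properties of the cubic B--spline: first $\widehat{\bm\phi}_{\bm a}\ge0$ with $\widehat{\bm\phi}_{\bm a}(\bm\xi)\le\widehat{\bm\phi}_{\bm a}(\bm0)$ (its transform is a nonnegative product of $\operatorname{sinc}^4$ factors), which gives $a_G\le\widehat{\bm\phi}_{\bm a}(\bm0)\,a_C$; and second the Riesz lower bound $p(\bm\theta)\ge c\,\widehat{\bm\phi}_{\bm a}(\bm0)$ (classical for B--splines, together with $\widehat{\bm\phi}_{\bm a}(\bm0)$ being comparable to $\prod_j h_j$), which upgrades this to $a_G\le C_2\,a_C\,p$.

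Since $a_G,a_C,p$ are real and nonnegative, this symbol inequality is exactly the operator inequality $\cA_G\preceq C_2\,\cA_C\,G$ on $\ell^2(\Z^\nd)$; being a quadratic--form inequality on the full grid, it persists on the subspace of sequences supported in $\square\cap\Om$. Testing against such a $u$ gives $(-\mathcal{L}_\delta u,u)=\langle \cA_G u,u\rangle\le C_2\,\langle G\,\cA_C u,u\rangle$. Finally, expanding $\langle G\,\cA_C u,u\rangle=\sum_{\bm k}(-\mathcal{L}_\delta u)(\bm x_{\bm k})\,(Gu)_{\bm k}$ and applying Cauchy--Schwarz in the $G$--inner product yields a bound by $\bigl|\cA_C u\bigr|_h\,|u|_h$, where $|u|_h=\|u\|_{L^2}$. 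This already proves the full--grid version of $(\dagger)$, i.e. with $r^h$ in place of $r^h_\Omega$.

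The step I expect to be the main obstacle is reconciling this translation--invariant computation with the interior restriction $r^h_\Omega$ in the statement. The finite bandwidth of $G$ (the kernel $\bm\phi_{\bm a}$ has support $2\bm h$, so $(Gu)_{\bm k}=0$ unless $\bm x_{\bm k}$ lies within $O(\bm h)$ of $\mathrm{supp}\,u\subset\Om$) confines the sum $\sum_{\bm k}(-\mathcal{L}_\delta u)(\bm x_{\bm k})(Gu)_{\bm k}$ to $\Om$ together with a thin exterior collar in $\Om_{\cI}$. The genuinely delicate point is controlling the collar contribution, on which $u$ vanishes but $(-\mathcal{L}_\delta u)(\bm x_{\bm k})$ need not: one must either absorb it into $|r^h_\Omega(-\mathcal{L}_\delta u)|_h\,\|u\|_{L^2}$ or bound it using the decay of $-\mathcal{L}_\delta u$ away from $\Om$, and this must be done uniformly in $\delta$. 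I expect the special choice $\bm a=2\bm h$ and the moment identities of \cref{lem:MultiMoment} to be what makes this boundary--layer estimate quantitative.
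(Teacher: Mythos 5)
Your overall strategy --- bound the continuous Galerkin form by the collocation residual via a Fourier-symbol comparison and then invoke the coercivity of \cref{thm:nonlocalmapping} on a fixed enlarged domain --- is exactly the paper's, and your symbol computations (nonnegativity of $\lambda_\delta$, $0\le\widehat{\phi}\le\widehat{\phi}(\bm 0)$ for the B-spline, the Riesz lower bound on the periodized Gram symbol) are sound. The genuine gap is the one you flag yourself at the end, and it is not a technicality that the special support $\bm a=2\bm h$ or \cref{lem:MultiMoment} will rescue: by writing the intermediate quantity as $\langle G\mathcal{A}_C u,u\rangle=\sum_{\bm k}(-\mathcal{L}_\delta u)(\bm x_{\bm k})(Gu)_{\bm k}$ and applying Cauchy--Schwarz in the $G$-inner product, you end up bounding $(-\mathcal{L}_\delta u,u)$ by $|r^h(-\mathcal{L}_\delta u)|_h\,\|u\|_{L^2}$, i.e.\ by the residual sampled on the \emph{full} grid. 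Since $(Gu)_{\bm k}$ does not vanish for $\bm x_{\bm k}$ in an $O(\bm h)$ collar outside $\Omega$ (the Gram matrix couples $\bm x_{\bm k}$ to nodes within $4h_j$ in each direction) while $(-\mathcal{L}_\delta u)(\bm x_{\bm k})$ has no smallness there, the collar terms cannot be absorbed into $|r^h_\Omega(-\mathcal{L}_\delta u)|_h\,\|u\|_{L^2}$, and $|r^h(-\mathcal{L}_\delta u)|_h\ge|r^h_\Omega(-\mathcal{L}_\delta u)|_h$ is the wrong direction. So the proof as proposed does not close.

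The paper avoids the collar entirely by choosing a different duality pairing: it applies Cauchy--Schwarz to the plain $l^2$ pairing, $|((u_{\bm k}),\,r^h_\Omega(-\mathcal{L}_\delta u))_{l^2}|\le C\,|(u_{\bm k})|_h\,|r^h_\Omega(-\mathcal{L}_\delta u)|_h$, the conversion from $l^2$ norms to $|\cdot|_h$ norms being \cref{lem:L2l2}. Because $u\in S(\square\cap\Omega)$ means $u_{\bm k}=0$ whenever $\bm x_{\bm k}\notin\Omega$, this pairing is \emph{identically equal} to the full-grid pairing $((u_{\bm k}),\,r^h(-\mathcal{L}_\delta i^h(u_{\bm k})))_{l^2}$ --- the exterior samples of the residual are multiplied by zero rather than by $(Gu)_{\bm k}$ --- and the latter is precisely the collocation quadratic form, which dominates the Galerkin form $(i^h(u_{\bm k}),-\mathcal{L}_\delta i^h(u_{\bm k}))$ by \cref{lem:GCF}\ref{FGCEquivalent} (the inequality $\lambda_C\ge C\lambda_G$ is immediate from $\lambda_\delta\ge0$ and $|\sin x/x|\le1$; no Gram symbol $p$ is needed), which in turn dominates $\|u\|_{L^2(\R^\nd)}^2$ by \cref{thm:nonlocalmapping}. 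If you replace your $G$-weighted pairing by this unweighted one, the rest of your argument goes through verbatim; as it stands, the boundary-layer estimate you defer is the missing --- and, by the paper's route, unnecessary --- ingredient.
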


The proof \cref{thm:stability} is shown at the end of this subsection before two more lemmas are introduced.
 Let $(\cdot \, , \cdot)_{l^2}$ be the $l^2$ norm associated inner product, namely 
\begin{equation} \notag
((u_{\bm{k}}), (v_{\bm{k}}))_{l^2} := \prod_{j=1}^{\nd}h_j \sum_{\bm{k} \in \mathbb{Z}^{\nd}}u_{\bm{k}} {v_{\bm{k}}}.
\end{equation}

\noindent
For any sequence  $(u_{\bm{k}}) \in l^2(\mathbb{Z}^{\nd})$, we define the Fourier series on $\bm{Q}=(-\pi, \pi)^{\nd}$, 
\begin{equation} \label{eqn:FourierSeries}
\widetilde{u}(\bm{\xi}) := \sum_{\bm{k} \in \mathbb{Z}^{\nd}} e^{-i\bm{k} \cdot \bm{\xi}}u_{\bm{k}},
\end{equation}
where
\begin{equation} \notag
u_{\bm{k}} = (2\pi)^{-{\nd}} \int_{\bm{Q}} e^{i\bm{k} \cdot \bm{\xi}}\widetilde{u}(\bm{\xi})d\bm{\xi}.
\end{equation}
In general $\widetilde{u}(\bm{\xi})$ is a complex-valued function and we use $\overline{\widetilde{u}(\bm{\xi})}$ to denote the complex conjugate of $\widetilde{u}(\bm{\xi})$.
The nonlocal operator $\mathcal{L}_{\delta}$ defines two discrete bilinear forms:
\begin{equation} \label{eqn:IGalerkin}
(i^h(u_{\bm{k}}), -\mathcal{L}_{\delta}i^h(v_{\bm{k}})) = \sum_{\bm{k}, \bm{k'} \in \mathbb{Z}^{\nd}}u_{\bm{k}}(\Psi_{\bm{k}}, -\mathcal{L}_{\delta}\Psi_{\bm{k'}}) {v_{\bm{k'}}},
\end{equation}
and
\begin{equation} \label{eqn:ICollocation}
((u_{\bm{k}}), -r^h\mathcal{L}_{\delta}i^h(v_{\bm{k}}))_{l^2} = \prod_{j=1}^{\nd}h_j \sum_{\bm{k}, \bm{k'} \in \mathbb{Z}^{\nd}}u_{\bm{k}}(-\mathcal{L}_{\delta}\Psi_{\bm{k'}})(\bm{x_{k}}){v_{\bm{k'}}}.
\end{equation}

\noindent 
The inner product $(\cdot \, , \cdot)$ in  \cref{eqn:IGalerkin} is the standard $L^2$ inner product.
\Cref{eqn:IGalerkin} defines a quadratic form corresponding to the Galerkin method, meanwhile, the quadratic form \cref{eqn:ICollocation} corresponds to the collocation method. Moreover, the stiffness matrix for the collocation scheme \cref{eqn:CollocationScheme} can be considered as a finite section of the infinite Toeplitz matrix induced by \cref{eqn:ICollocation}. Before applying the Fourier analysis to \cref{eqn:IGalerkin,eqn:ICollocation}, we study the Fourier symbol of the nonlocal diffusion operator $\mathcal{L}_{\delta}$ first. Take $u \in \mathcal{S}_{\delta} $, $\widehat{u}(\bm{\xi})$ is the Fourier transform of $u(\bm{x})$ defined by
\begin{equation} \notag 
\widehat{u}(\bm{\xi}) := \int_{\mathbb{R}^{\nd}} e^{-i \bm{x} \cdot \bm{\xi}} u(\bm{x}) d\bm{x}.
\end{equation}
The Fourier transform of the nonlocal diffusion operator $\mathcal{L}_{\delta}$ is given as
\begin{equation} \label{eqn:FT_diffusion}
\begin{aligned}
-\widehat{\mathcal{L}_{\delta} u}(\bm{\xi}) &=-\int_{\mathbb{R}^{\nd}} e^{-i\bm{x} \cdot \bm{\xi}} \int_{B_{\delta}(\bm{0})}\rho_{\delta}(|\bm{s}|)(u(\bm{x}+\bm{s})-u(\bm{x}))d\bm{s} d\bm{x} , \\
  &=-\int_{B_{\delta}(\bm{0})} \int_{\mathbb{R}^{\nd}} \rho_{\delta}(|\bm{s}|)(u(\bm{x}+\bm{s})-u(\bm{x})) e^{-i\bm{x} \cdot \bm{\xi}}d\bm{x} d\bm{s} , \\ 
  &=\int_{B_{\delta}(\bm{0})} \rho_{\delta}(|\bm{s}|) (1-e^{i\bm{s}\cdot\bm{\xi}}) \widehat{u}(\bm{\xi}) d\bm{s}, \\ 
  &= \lambda_{\delta}(\bm{\xi}) \widehat{u}(\bm{\xi}),
\end{aligned}
\end{equation}
where $\lambda_{\delta}(\bm{\xi})$ is the Fourier symbol of $\mathcal{L}_{\delta}$,
\begin{equation} \label{eqn:NonlocalEigen} 
\lambda_{\delta}(\bm{\xi}) =\int_{B_{\delta}(\bm{0})} \rho_{\delta}(|\bm{s}|) (1-e^{i\bm{s} \cdot \bm{\xi}})d\bm{s} =\int_{B_{\delta}(\bm{0})} \rho_{\delta}(|\bm{s}|) (1-\textnormal{cos}(\bm{s} \cdot \bm{\xi}))d\bm{s}. \\
\end{equation}
More discussions on the spectral analysis of the nonlocal diffusion operator can be found in \cite{Du2016}. From \cref{eqn:NonlocalEigen}, it is obvious that $\lambda_{\delta}(\bm{\xi})$ is real and non-negative. Now, we give a comparison of the 
two quadratic forms  \cref{eqn:IGalerkin,eqn:ICollocation} using Fourier analysis. 

\begin{lemma} \label{lem:GCF}
Let $\widetilde{u}(\bm{\xi})$ and $\widetilde{v}(\bm{\xi})$ be the Fourier series of the sequences $(u_{\bm{k}}), (v_{\bm{k}}) \in l^2(\mathbb{Z}^{\nd})$ respectively. Then
\begin{enumerate}[label=(\roman*)]
\item $(i^h(u_{\bm{k}}), -\mathcal{L}_{\delta}i^h(v_{\bm{k}})) = (2\pi)^{-{\nd}} \mathlarger{\int}_{\bm{Q}} \widetilde{u}(\bm{\xi}) \overline{\widetilde{v}(\bm{\xi})} \lambda_G(\delta, \bm{h}, \bm{\xi}) d\bm{\xi} $, \\[.05cm] \label{FGalerkin}
\item $((u_{\bm{k}}), -r^h\mathcal{L}_{\delta}i^h(v_{\bm{k}}))_{l^2} = (2\pi)^{-{\nd}} \mathlarger{\int}_{\bm{Q}} \widetilde{u}(\bm{\xi}) \overline{\widetilde{v}(\bm{\xi})} \lambda_C(\delta, \bm{h}, \bm{\xi}) d\bm{\xi} $, \\[.05cm] \label{FCollocation}
\item $\lambda_C(\delta, \bm{h}, \bm{\xi}) \geq  C \lambda_G(\delta, \bm{h}, \bm{\xi})$, for $C$ independent of $\delta, \bm{h}$ and $\bm{\xi}$, \label{FGCEquivalent}
\end{enumerate}
and $\lambda_G$ and $\lambda_C$ are given by
\begin{equation} \label{eqn:lambdaG}
\lambda_G(\delta, \bm{h}, \bm{\xi}) =2^{8{\nd}}  \sum_{\bm{r} \in \mathbb{Z}^{\nd}} \lambda_{\delta} \left((\bm{\xi} + 2 \pi \bm{r}) \oslash{\bm{h}} \right) \prod_{j=1}^{\nd} h_j \left(\frac{\textnormal{sin}(\xi_j /2)}{\xi_j + 2 \pi r_j} \right)^{8}, 
\end{equation}
\begin{equation} \label{eqn:lambdaC}
\lambda_C(\delta, \bm{h}, \bm{\xi}) = 2^{4{\nd}} \sum_{\bm{r} \in \mathbb{Z}^{\nd}} \lambda_{\delta}\left((\bm{\xi} + 2 \pi \bm{r}) \oslash{\bm{h}} \right) \prod_{j=1}^{\nd} h_j \left(\frac{\textnormal{sin}(\xi_j /2)}{\xi_j + 2 \pi r_j} \right)^{4}. 
\end{equation}
\end{lemma}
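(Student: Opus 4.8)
The plan is to reduce all three statements to the continuous Fourier transform of a single spline window and then to exploit the non-negativity of the symbol $\lambda_\delta$. The preliminary computation I would do first is the Fourier transform of the RK basis. Since $\Psi_{\bm k}(\bm x)=\bm{\phi_{\bm a}}(\bm x-\bm x_{\bm k})$ with $\bm a=2\bm h$ factorizes as a tensor product, and the cubic B-spline in \cref{eqn:CubicSpline} is a rescaled fourfold convolution of an indicator, its transform is $\widehat\phi(\mu)=128\,\sin^4(\mu/4)/\mu^4\ge 0$. Hence $\widehat{\bm{\phi_{\bm a}}}(\bm\zeta)=\prod_j\widehat{\phi_{a_j}}(\zeta_j)=\prod_j 2h_j\,\widehat\phi(2h_j\zeta_j)$, and evaluating at $\bm\zeta=(\bm\xi+2\pi\bm r)\oslash\bm h$ yields, per coordinate, the identity $\widehat{\phi_{a_j}}((\xi_j+2\pi r_j)/h_j)=2^4 h_j\big(\sin(\xi_j/2)/(\xi_j+2\pi r_j)\big)^4$, using $\sin^4((\xi_j+2\pi r_j)/2)=\sin^4(\xi_j/2)$. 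The non-negativity of $\widehat{\bm{\phi_{\bm a}}}$ will be the engine of part (iii).

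For (i) I would apply Plancherel to the $L^2$ form, write $\widehat{i^h u}(\bm\zeta)=\widehat{\bm{\phi_{\bm a}}}(\bm\zeta)\,\widetilde u(\bm h\odot\bm\zeta)$ (a direct computation from the definition of $i^h$), and use $\widehat{-\mathcal{L}_\delta i^h v}(\bm\zeta)=\lambda_\delta(\bm\zeta)\widehat{i^h v}(\bm\zeta)$ from \cref{eqn:FT_diffusion}. Since $\lambda_\delta$ and $\widehat{\bm{\phi_{\bm a}}}$ are real, the form becomes $(2\pi)^{-{\nd}}\int_{\mathbb{R}^{\nd}}\lambda_\delta(\bm\zeta)\,\widehat{\bm{\phi_{\bm a}}}(\bm\zeta)^2\,\widetilde u(\bm h\odot\bm\zeta)\overline{\widetilde v(\bm h\odot\bm\zeta)}\,d\bm\zeta$. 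The substitution $\bm\xi=\bm h\odot\bm\zeta$ (Jacobian $\prod_j h_j^{-1}$), followed by folding the $\mathbb{R}^{\nd}$-integral onto $\bm{Q}$ using the $2\pi$-periodicity of $\widetilde u,\widetilde v$, collapses the integral to $\bm{Q}$ with the resulting sum over $\bm r$ becoming $\lambda_G$; inserting the per-coordinate spline identity squared (producing the exponent $8$ and the factor $2^{8{\nd}}$) reproduces \cref{eqn:lambdaG}.

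Part (ii) follows the same template but tracks point evaluation instead of an $L^2$ pairing. I would express the grid values $w_{\bm k}:=(-\mathcal{L}_\delta i^h v)(\bm x_{\bm k})$ through the inverse transform, and after the same change of variables and periodization read off the Fourier series $\widetilde w(\bm\xi)=\prod_j h_j^{-1}\big[\sum_{\bm r}\lambda_\delta((\bm\xi+2\pi\bm r)\oslash\bm h)\,\widehat{\bm{\phi_{\bm a}}}((\bm\xi+2\pi\bm r)\oslash\bm h)\big]\widetilde v(\bm\xi)$. Parseval for Fourier series then gives $((u_{\bm k}),(w_{\bm k}))_{l^2}=\prod_j h_j\,(2\pi)^{-{\nd}}\int_{\bm{Q}}\widetilde u\,\overline{\widetilde w}\,d\bm\xi$; the prefactor $\prod_j h_j$ cancels the $\prod_j h_j^{-1}$ in $\widetilde w$, leaving a single power of $\widehat{\bm{\phi_{\bm a}}}$ and hence exponent $4$ with factor $2^{4{\nd}}$, matching \cref{eqn:lambdaC}. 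Throughout I would note that the interchanges of summation and integration are legitimate because the spline factors decay like $\prod_j(\xi_j+2\pi r_j)^{-4}$ while $\lambda_\delta((\bm\xi+2\pi\bm r)\oslash\bm h)$ grows at most quadratically in $\bm r$ (from $1-\cos t\le t^2/2$ and the bounded second moment \cref{eqn:boundedmoment}), so the per-coordinate sums converge.

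Finally, for (iii) I would compare the two symbols term by term in $\bm r$. Because $\lambda_\delta\ge 0$ and $\widehat{\bm{\phi_{\bm a}}}\ge 0$, it suffices to show $\prod_j h_j^{-1}\widehat{\phi_{a_j}}((\xi_j+2\pi r_j)/h_j)\le 1$ for every $\bm r$ and every $\bm\xi\in\bm{Q}$, for then the $\bm r$-term of $\lambda_G$, namely $\big(\prod_j h_j^{-1}\widehat{\phi_{a_j}}\big)\cdot\widehat{\bm{\phi_{\bm a}}}$, is dominated by the $\bm r$-term of $\lambda_C$. By the spline identity this per-coordinate factor equals $16\big(\sin(\xi_j/2)/(\xi_j+2\pi r_j)\big)^4$; for $r_j=0$ it is $(\sin(\xi_j/2)/(\xi_j/2))^4\le 1$, and for $r_j\ne 0$ one has $|\xi_j+2\pi r_j|\ge\pi$ on $\bm{Q}$, so the factor is at most $16/\pi^4<1$. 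Hence each factor is $\le 1$, the product is $\le 1$, and summing over $\bm r$ gives $\lambda_G\le\lambda_C$, so \cref{FGCEquivalent} holds even with $C=1$. I expect the main obstacle to be the bookkeeping of constants and the change of variables/periodization in (i)--(ii); the inequality (iii) is then essentially immediate once the uniform per-coordinate bound is in hand, its only genuine content being the elementary estimate on $\sin^4(\xi_j/2)/(\xi_j+2\pi r_j)^4$ together with the non-negativity of $\lambda_\delta$ and $\widehat{\bm{\phi_{\bm a}}}$ that permits a termwise comparison.
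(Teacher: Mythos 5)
Your proposal is correct and follows essentially the same route as the paper: compute the Fourier transform of the tensor-product B-spline window, use Plancherel/Parseval and the symbol $\lambda_\delta$ from \cref{eqn:FT_diffusion}, change variables and periodize onto $\bm{Q}$ to obtain \cref{eqn:lambdaG,eqn:lambdaC}, and compare the two symbols termwise in $\bm{r}$ using $\lambda_\delta\ge 0$ and $|\sin(x)/x|\le 1$. The only cosmetic difference is that you work directly with $\widehat{i^h u}=\widehat{\bm{\phi_{\bm a}}}\cdot\widetilde u(\bm h\odot\cdot)$ rather than first computing the individual matrix entries $(\Psi_{\bm k},-\mathcal{L}_\delta\Psi_{\bm{k'}})$ and $-\mathcal{L}_\delta\Psi_{\bm{k'}}(\bm{x_k})$, and your part (iii) spells out the per-coordinate bound (yielding $C=1$) that the paper leaves as a one-line remark.
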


\noindent 
\begin{proof} 
Using \cref{eqn:FT_diffusion} and Parseval's identity, we arrive at
\begin{equation} \notag
\begin{aligned}
&(\Psi_{\bm{k}}, -\mathcal{L}_{\delta}\Psi_{\bm{k'}}) = (2\pi)^{-{\nd}} \int_{\mathbb{R}^{\nd}} \widehat{\Psi_{\bm{k}}}(\bm{\xi}) \overline{\lambda_{\delta}(\bm{\xi})} \overline{\widehat{\Psi_{\bm{k'}}}(\bm{\xi})} d\bm{\xi}, \\
  &= (2\pi)^{-{\nd}} \int_{\mathbb{R}^{\nd}} e^{i(\bm{x_{k'}}-\bm{x_k}) \cdot \bm{\xi}} \lambda_{\delta}(\bm{\xi}) |\widehat{\Psi_{\bm{0}}}|^2 (\bm{\xi}) d\bm{\xi},\\
  &= (2\pi)^{-{\nd}} \sum_{\bm{r}\in \Z^\nd}\int_{\bm{Q}} \left( e^{i(\bm{k'}-\bm{k})\cdot \bm{\xi}} \lambda_\del((\bm{\xi} + 2 \pi \bm{r}) \oslash \bm{h}) |\widehat{\Psi_{\bm{0}}}|^2 ((\bm{\xi} + 2 \pi \bm{r}) \oslash \bm{h}) d\bm{\xi} \right)/ \prod_{j=1}^{\nd} h_j\,. 
\end{aligned}
\end{equation}
Using \cref{eqn:RKShape} and the Fourier transform of the cubic B-spline function \cref{eqn:CubicSpline} given by 
\begin{equation} \notag
\widehat{\phi}(\xi) = \frac{1}{2}\left(\frac{\textnormal{sin}(\xi/4)}{\xi/4}\right)^4,
\end{equation}
we have the Fourier transform of the RK basis function
\begin{equation} \notag
\widehat{\Psi_{\bm{0}}}(\bm{\xi})= \prod_{j=1}^{\nd} \widehat{\phi \left(\frac{x_j}{2h_j}\right)}(\xi_j) = \prod_{j=1}^{\nd} h_j\left(\frac{\textnormal{sin}(h_j\xi_j/2)}{(h_j\xi_j/2)}\right)^4.
\end{equation}
\noindent
Therefore,
\[
(\Psi_{\bm{k}}, -\mathcal{L}_{\delta}\Psi_{\bm{k'}})= 
(2\pi)^{-{\nd}}\int_{\bm{Q}} e^{i(\bm{k'}-\bm{k})\cdot \bm{\xi}} \lambda_G(\delta, \bm{h}, \bm{\xi}) d\bm{\xi}\,,
\]
where $\lambda_G$ is given by \cref{eqn:lambdaG}. 
Combing the above equation with \cref{eqn:FourierSeries} and \cref{eqn:IGalerkin}, we obtain {\it (i)},
\begin{equation} \notag
\begin{aligned}
(i^h(u_{\bm{k}}), -\mathcal{L}_{\delta}i^h(v_{\bm{k}})) &= (2\pi)^{-{\nd}}\sum_{\bm{k}, \bm{k'} \in \mathbb{Z}^{\nd}}u_{\bm{k}}{v_{\bm{k'}}} \int_{\bm{Q}} e^{i(\bm{k'}-\bm{k}) \cdot \bm{\xi}} \lambda_G(\delta, \bm{h}, \bm{\xi}) d\bm{\xi}, \\
                &= (2\pi)^{-{\nd}}\int_{\bm{Q}} \widetilde{u}(\bm{\xi}) \overline{\widetilde{v}(\bm{\xi})} \lambda_G(\delta, \bm{h}, \bm{\xi}) d\bm{\xi}\,. \\
\end{aligned}
\end{equation}

Next, following the same procedure, we arrive at the collocation matrix expressed as
\begin{equation} \notag
\begin{aligned}
-\mathcal{L}_{\delta}\Psi_{\bm{k'}}(\bm{x_k}) &= (2\pi)^{-{\nd}} \int_{\mathbb{R}^d} e^{i \bm{x_k} \cdot \bm{\xi}}\lambda_{\delta}(\bm{\xi}) \widehat{\Psi_{\bm{k'}}}(\bm{\xi}) d\bm{\xi}, \\
  &= (2\pi)^{-{\nd}} \int_{\mathbb{R}^{\nd}} e^{i (\bm{x_k}-\bm{x_{k'}}) \cdot \bm{\xi}} \lambda_{\delta}(\bm{\xi}) \widehat{\Psi_{\bm{0}}}(\bm{\xi}) d\bm{\xi},\\
  &=  (2\pi)^{-{\nd}}\int_{\bm{Q}} e^{i(\bm{k}-\bm{k'}) \cdot \bm{\xi}} \lambda_C(\delta, \bm{h}, \bm{\xi}) d\bm{\xi}\,. \\
\end{aligned}
\end{equation}
Therefore, the collocation form \cref{eqn:ICollocation} is written as
\begin{equation} \notag
\begin{aligned}
((u_{\bm{k}}), -r^h\mathcal{L}_{\delta}i^h(v_{\bm{k}}))_{l^2} &= (2\pi)^{-{\nd}}\sum_{\bm{k}, \bm{k'} \in \mathbb{Z}^{\nd}}u_{\bm{k}}{v_{\bm{k'}}} \int_{\bm{Q}} e^{i(\bm{k'}-\bm{k})\cdot \bm{\xi}} \lambda_C(\delta, \bm{h}, \bm{\xi}) d\bm{\xi}, \\
  &= (2\pi)^{-{\nd}}\int_{\bm{Q}} \widetilde{u}(\bm{\xi}) \overline{\widetilde{v}(\bm{\xi})} \lambda_C(\delta, \bm{h}, \bm{\xi}) d\bm{\xi}. \quad   \\
\end{aligned}
\end{equation}
This finishes the proof of \ref{FCollocation}.

With \ref{FGalerkin} and \ref{FCollocation} being established, it is easy to see \ref{FGCEquivalent} by the fact that $\lambda_{\delta}(\bm{\xi})$ is non-negative and $0 \leq |\sin(x)/x| \leq 1$. \end{proof}


Before showing the proof of  \cref{thm:stability}, we need the following lemma that says the $|\cdot|_h$ norm defined through the RK basis and 
 discrete $l^2$ norm are equivalent.
\begin{lemma} \label{lem:L2l2}
The following two norms are equivalent, i.e., there exist two constants $C_1, C_2 > 0$ independent of $h$, such that
\begin{equation} \notag
C_1  \| u \|_{l^2(\mathbb{R}^{\nd})} \leq |(u_{\bm{k}})_{\bm{k} \in \mathbb{Z}^{\nd}}|_h \leq C_2 \| u \|_{l^2(\mathbb{R}^{\nd})}.
\end{equation}
\end{lemma}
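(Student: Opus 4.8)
The plan is to evaluate $|(u_{\bm k})|_h^2=\|i^h(u_{\bm k})\|_{L^2(\R^\nd)}^2$ exactly via Fourier analysis and to recognize it as $\|u\|_{l^2}^2$ (the norm induced by $(\cdot\,,\cdot)_{l^2}$, which carries the weight $\prod_j h_j$, so that $h$-independence of the constants is even plausible) times a weight pinched between two positive constants. Since $\bm a=2\bm h$, the RK basis reduces by \eqref{eqn:RKShape} to the translate $\Psi_{\bm k}(\bm x)=\Psi_{\bm 0}(\bm x-\bm x_{\bm k})$, so $i^h(u_{\bm k})=\sum_{\bm k}u_{\bm k}\Psi_{\bm 0}(\cdot-\bm x_{\bm k})$ and its Fourier transform is $\widehat{i^h(u_{\bm k})}(\bm\xi)=\big(\sum_{\bm k}u_{\bm k}e^{-i\bm x_{\bm k}\cdot\bm\xi}\big)\widehat{\Psi_{\bm 0}}(\bm\xi)$. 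Writing $\bm x_{\bm k}\cdot\bm\xi=\bm k\cdot(\bm h\odot\bm\xi)$, the bracketed factor is precisely the Fourier series $\widetilde u(\bm h\odot\bm\xi)$ of \eqref{eqn:FourierSeries}, which is $2\pi$-periodic in each $h_j\xi_j$. (It suffices to carry out the computation for finitely supported $(u_{\bm k})$ and pass to the limit, so every manipulation is justified.)

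By Parseval's identity, $|(u_{\bm k})|_h^2=(2\pi)^{-\nd}\int_{\R^\nd}|\widetilde u(\bm h\odot\bm\xi)|^2|\widehat{\Psi_{\bm 0}}(\bm\xi)|^2\,d\bm\xi$. Next I would substitute $\bm\eta=\bm h\odot\bm\xi$, so $d\bm\xi=(\prod_j h_j)^{-1}d\bm\eta$, and use $\widehat{\Psi_{\bm 0}}(\bm\xi)=\prod_j h_j\big(\sin(h_j\xi_j/2)/(h_j\xi_j/2)\big)^4$ to turn the integrand into $|\widetilde u(\bm\eta)|^2\prod_j h_j^2\big(\sin(\eta_j/2)/(\eta_j/2)\big)^8$; this gives $|(u_{\bm k})|_h^2=(2\pi)^{-\nd}\big(\prod_j h_j\big)\int_{\R^\nd}|\widetilde u(\bm\eta)|^2\prod_j\big(\sin(\eta_j/2)/(\eta_j/2)\big)^8\,d\bm\eta$. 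Folding the integral over $\R^\nd$ into the period cell $\bm Q=(-\pi,\pi)^\nd$ and using $\sin((\eta_j+2\pi r_j)/2)^2=\sin(\eta_j/2)^2$ yields $|(u_{\bm k})|_h^2=(2\pi)^{-\nd}\big(\prod_j h_j\big)\int_{\bm Q}|\widetilde u(\bm\eta)|^2\prod_{j=1}^{\nd}g(\eta_j)\,d\bm\eta$, where $g(\eta):=\sum_{r\in\Z}\big(2\sin(\eta/2)/(\eta+2\pi r)\big)^8$ is $h$-free.

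The crux is to show $g$ is bounded above and below by strictly positive constants on $[-\pi,\pi]$. The upper bound is immediate since $|\sin(\eta/2)|\le 1$ and the tail $\sum_r(\eta+2\pi r)^{-8}$ converges uniformly, making $g$ continuous and $2\pi$-periodic. For the lower bound, the point is that $g$ never vanishes on the compact period: for $\eta\in(-\pi,\pi)\setminus\{0\}$ the $r=0$ term is strictly positive because $\sin(\eta/2)\ne 0$; at $\eta=0$ the $r=0$ term tends to $1$; and at $\eta=\pm\pi$, $\sin(\pm\pi/2)=\pm1$ makes every term positive. Hence $g$ attains a positive minimum $c_1>0$ and finite maximum $c_2$ on $[-\pi,\pi]$, so $c_1^{\nd}\le\prod_j g(\eta_j)\le c_2^{\nd}$ on $\bm Q$, with $c_1,c_2$ independent of $\bm h$. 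I expect this positivity-and-continuity step — showing that the periodized squared symbol of the cubic B-spline is bounded away from zero — to be the main, though standard, obstacle.

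To close, I would apply Parseval for Fourier series, $(2\pi)^{-\nd}\int_{\bm Q}|\widetilde u(\bm\eta)|^2\,d\bm\eta=\sum_{\bm k}|u_{\bm k}|^2$, together with the definition of $(\cdot\,,\cdot)_{l^2}$, to recognize that $(2\pi)^{-\nd}\big(\prod_j h_j\big)\int_{\bm Q}|\widetilde u|^2=\prod_j h_j\sum_{\bm k}|u_{\bm k}|^2=\|u\|_{l^2}^2$. Combining this with the two-sided bound on $\prod_j g(\eta_j)$ gives $c_1^{\nd/2}\|u\|_{l^2}\le|(u_{\bm k})|_h\le c_2^{\nd/2}\|u\|_{l^2}$, which is exactly the claim with $C_1=c_1^{\nd/2}$ and $C_2=c_2^{\nd/2}$ independent of $h$.
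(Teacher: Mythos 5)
Your proposal is correct and follows essentially the same route as the paper: both compute $|(u_{\bm k})|_h^2$ via Parseval and periodization as $(2\pi)^{-\nd}\prod_j h_j\int_{\bm Q}|\widetilde u(\bm\xi)|^2\beta(\bm\xi)\,d\bm\xi$ with $\beta(\bm\xi)=2^{8\nd}\sum_{\bm r}\prod_j\bigl(\sin(\xi_j/2)/(\xi_j+2\pi r_j)\bigr)^{8}=\prod_j g(\xi_j)$, and then conclude from the continuity and strict positivity of this periodized B-spline symbol on the compact cell $\bm Q$. Your write-up simply supplies more detail (the explicit change of variables, the folding argument, and the case analysis for positivity of $g$) than the paper's terse version.
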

\noindent 
\begin{proof}
First, from Parsevel's identity, we can write the $l^2$ norm as 
\begin{equation}  \notag 
\begin{aligned}
\| u \|^2_{l^2(\mathbb{R}^{\nd})} &=  \prod_{j=1}^{\nd} h_j \sum_{\bm{k} \in \mathbb{Z}^d}| u_{\bm{k}}|^2 , \\
		&= (2\pi)^{-{\nd}} \prod_{j=1}^{\nd} h_j \int_{\bm{Q}}  \widetilde{u}^2(\bm{\xi}) d\bm{\xi}.
\end{aligned}
\end{equation}
Then, similar to the proof of \cref{lem:GCF} \ref{FGalerkin}, by replacing the nonlocal diffusion operator with the identity operator, we obtain
\begin{equation}  \notag 
|(u_{\bm{k}})_{\bm{k} \in \mathbb{Z}^{\nd}}|_h^2 = (i^h(u_{\bm{k}}), i^h(u_{\bm{k}}))=  (2\pi)^{-{\nd}} \prod_{j=1}^{\nd} h_j  \int_{\bm{Q}} \beta(\bm{\xi}) \widetilde{u}^2(\bm{\xi})  d\bm{\xi} ,
\end{equation}
where $\beta(\bm{\xi}) $ is continuous and strictly positive,
\begin{equation} \notag
\beta(\bm{\xi}) = 2^{8{\nd}} \sum_{\bm{r} \in \mathbb{Z}^{\nd}} \prod_{j=1}^{\nd} \left(\frac{\textnormal{sin}(\xi_j /2)}{\xi_j + 2 \pi r_j} \right)^{8}.
\end{equation}
Thus, $\beta(\bm{\xi})$ is bounded above and below on $\bm{Q}$. Therefore, we complete the proof.
\end{proof} 

\noindent 
\begin{proof}[Proof of of  \cref{thm:stability}]
For all sequences $(u_{\bm{k}}), (v_{\bm{k}})$, 
we derive via the Cauchy-Schwartz inequality and \cref{lem:L2l2} 
\begin{equation} \label{eqn:cauchyschwartz}
\begin{aligned}
|((u_{\bm{k}}), (v_{\bm{k}}))_{l^2}| &= \prod_{j=1}^{\nd}h_j  \left| \sum_{\bm{k} \in \mathbb{Z}^{\nd}}u_{\bm{k}} {v_{\bm{k}}} \right|, \\
                   & \leq  \left(\prod_{j=1}^{\nd}h_j \sum_{\bm{k} \in \mathbb{Z}^{\nd}} |u_{\bm{k}}|^2 \right)^{1/2}  \left(\prod_{j=1}^{\nd}h_j  \sum_{\bm{k} \in \mathbb{Z}^{\nd}} |v_{\bm{k}}|^2 \right)^{1/2}, \\
                 &\leq C |(u_{\bm{k}})|_h \cdot |(v_{\bm{k}})|_h .
\end{aligned}
\end{equation}
Finally, for $u \in S \left(\square \cap {\Omega} \right) $ we may by definition write $u = i^h(u_{\bm{k}})$, and thus we have 
\begin{align*}
\left| (u_{\bm{k}})\right|_h \cdot  \left|r^h_{\Omega}(-\mathcal{L}_{\delta} u) \right|_h &\geq C \left|( (u_{\bm{k}}),  r^h_{\Omega}(-\mathcal{L}_{\delta} u))_{l^2} \right|,  \\
   &= C \left|((u_{\bm{k}}),  r^h (-\mathcal{L}_{\delta} i^h(u_{\bm{k}})))_{l^2} \right|,  \\
   &\geq C \left|(i^h(u_{\bm{k}}), (-\mathcal{L}_{\delta} i^h(u_{\bm{k}}))) \right|,  \\
   &\geq C \| u\|^2_{L^2(\mathbb{R}^{\nd})} \,.
\end{align*}
The first line is a result of \cref{eqn:cauchyschwartz} and the second line is by definition of $S(\square \cap {\Omega})$. \cref{lem:GCF} \ref{FGCEquivalent} shows the third line and the fourth line is from the stability given by \cref{thm:nonlocalmapping} since $u\in S \left(\square \cap {\Omega} \right)\subset \mathcal{S}_\delta$ and for a sufficiently large and fixed domain $\widetilde{\Om}\supset \Om$ we have $u|_{\R^\nd \backslash\widetilde{\Om}}=0$. 
\end{proof}

\subsection{Consistency of the RK collocation method} \label{subsec:Convergence}
In this section, we discuss uniform consistency of the RK collocation method on the nonlocal diffusion models, namely that  truncation error is independent of the nonlocal scaling parameter $\delta$. The uniform consistency result is crucial to show the asymptotic compatibility of the scheme.  Combining the stability result in \cref{subsec:Stability} and the truncation error analysis to be presented shortly, we show that the RK collocation method is convergent. The numerical solution is convergent to the nonlocal solution with a fixed nonlocal parameter $\delta$ (\cref{thm:NonlocalConvergenceTHM}) and to the corresponding local limit as $\delta$ and the mesh spacing both go to zero (\cref{thm:AC}).   
If the RK support size is carefully chosen, RK approximation has the synchronized convergence property \cite{Li1996}, which is the key ingredient to show the uniform consistency of the collocation scheme \cref{eqn:CollocationScheme}.

\cite[Theorem 5.2]{Li1996} shows that the
 synchronized convergence property holds if the kernel function $\bm{\phi_{a}}$ defined in \cref{eqn:CartesianWindow} satisfies the Strang-Fix condition and the correction function $C(\bm{x};\bm{x}-\bm{x_k})$ defined in \cref{eqn:Correction} is a constant (the original work assumes the constant is $1$ but it is easy to see the result also holds with any constant because the set of functions satisfying the Strang-Fix condition is invariant under a constant multiplication). Since we see in \cref{rem:RKsupport} that the special choice of the support $\bm{a}=2r_0\bm{h}$ ($r_0\in \mathbb{N}$) implies that $\bm{\phi_{a}}$ satisfies the Strang-Fix condition and  $C(\bm{x};\bm{x}-\bm{x_k})\equiv C$, the synchronized convergence property is guaranteed. In \cite{Li1996}, the RK approximation errors are measured in Sobolev norms, but its proof also shows that point-wise errors are controlled under stronger regularity assumptions of the approximated functions. Here we present the result without proof and the readers are referred to \cite{Li1996,Li1998synchronized} for more details.

\begin{lemma} \label{lem:synchronizedconvergence}
\textbf{(Synchronized Convergence)}
Assume $u(\bm{x}) \in C^4(\mathbb{R}^{\nd}) $ and  $\Pi^h u$  is the RK interpolation with
the shape function given by \cref{eqn:RKShape}. $\Pi^h u$ has synchronized convergence, namely 
\begin{equation} \notag 
\left|D^{\bm{\alpha}}(\Pi^h u - u )  \right|_\infty \leq C |u^{(|\bm{\alpha}| + 2)} |_{\infty} h_{\textnormal{max}}^2, \quad \textnormal{for } |\bm{\alpha}| = 0, 1, 2,
\end{equation}
where $C$ is a generic constant independent of $h_{\textnormal{max}}$\,.
\end{lemma}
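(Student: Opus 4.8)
The plan is to write $\Pi^h u - u$ as $\Pi^h$ applied to a Taylor remainder plus the action of $E^h := \Pi^h - I$ on a low-degree Taylor polynomial, exploiting that with $\bm{a}=2\bm{h}$ the shape function is simply $\Psi_{\bm{k}}=\bm{\phi_a}(\cdot-\bm{x_k})$ and that the moments $\bm{m}_{\bm\beta}$ are \emph{constants} with the cancellations recorded in \cref{lem:MultiMoment}. Since $\bm{m_0}=1$ and $\bm{m}_{\bm\beta}=0$ for $|\bm\beta|=1$, the operator $\Pi^h$ reproduces affine functions, so $E^h$ annihilates polynomials of degree $\le 1$. This gives consistency, but by itself only yields $O(h_{\max}^{2-|\bm\alpha|})$ for the $\bm\alpha$-th derivative, which is too weak for $|\bm\alpha|\ge 1$; the whole point of the lemma is to recover the extra order.

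The first main step is an exact computation of $E^h$ on monomials. Expanding $(\bm{x_k}-\bm{x_0})^{\bm\gamma}$ about $\bm x$ and using $\sum_{\bm k}\bm{\phi_a}(\bm x-\bm{x_k})(\bm{x_k}-\bm x)^{\bm\beta}=(-1)^{|\bm\beta|}\bm{m}_{\bm\beta}$, I would show
\[
\Pi^h[(\cdot-\bm{x_0})^{\bm\gamma}](\bm x)=\sum_{\bm\beta\le\bm\gamma}\binom{\bm\gamma}{\bm\beta}(-1)^{|\bm\beta|}\bm{m}_{\bm\beta}\,(\bm x-\bm{x_0})^{\bm\gamma-\bm\beta},
\]
which, because the $\bm{m}_{\bm\beta}$ are constant, is a polynomial in $\bm x$ that can be differentiated term by term. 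Inserting the values from \cref{lem:MultiMoment} and \cref{eqn:MomentVal} (for $|\bm\beta|\le 3$ only $\bm{m_0}=1$ and $\bm{m}_{2\bm e_l}=h_l^2/3$ survive, since $\bm{m}_{\bm\beta}=0$ when $|\bm\beta|=1$ or $3$), I obtain that for $|\bm\gamma|=2$ the error $E^h[(\cdot-\bm{x_0})^{\bm\gamma}]$ is a \emph{constant} of size $O(h_{\max}^2)$, while for $|\bm\gamma|=3$ it is a polynomial of degree exactly $1$ with coefficients of size $O(h_{\max}^2)$. The vanishing of the odd moments is precisely what removes the degree-$0$ part in the cubic case and the degree-$\ge 1$ parts in the quadratic case.

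The second step is to localize. Fixing $\bm{x_0}$, write $u=T_3+R$, where $T_3$ is the degree-$3$ Taylor polynomial of $u$ at $\bm{x_0}$ and $R$ vanishes to order $4$ there, with $|R(\bm y)|\le C|u^{(4)}|_\infty|\bm y-\bm{x_0}|^4$ and $D^{\bm\beta}R(\bm{x_0})=0$ for $|\bm\beta|\le 3$. Since $E^h$ kills the affine part of $T_3$, only the homogeneous degree-$2$ and degree-$3$ parts contribute, and by the monomial computation their $\bm\alpha$-derivatives at $\bm{x_0}$ are nonzero only for $|\bm\alpha|=0$ (quadratic part, giving $\tfrac16\sum_j h_j^2\partial_j^2 u(\bm{x_0})$) and for $|\bm\alpha|=1$ (cubic part, giving an $O(h_{\max}^2|u^{(3)}|_\infty)$ term). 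For the remainder I would bound $D^{\bm\alpha}E^hR(\bm{x_0})=\sum_{\bm k}D^{\bm\alpha}\bm{\phi_a}(\bm{x_0}-\bm{x_k})R(\bm{x_k})$ (the $D^{\bm\alpha}R(\bm{x_0})$ term drops out) using $|D^{\bm\alpha}\bm{\phi_a}|\le Ch_{\max}^{-|\bm\alpha|}$ (from quasi-uniformity \cref{eqn:GridVector} and $\bm\phi\in C^2$) together with $|R(\bm{x_k})|=O(h_{\max}^4|u^{(4)}|_\infty)$ on the $O(1)$ active nodes, giving $O(h_{\max}^{4-|\bm\alpha|}|u^{(4)}|_\infty)$. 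Assembling the cases $|\bm\alpha|=0,1,2$ then produces bounds governed by $|u^{(2)}|_\infty$, $|u^{(3)}|_\infty$, $|u^{(4)}|_\infty$ respectively, i.e. exactly $|u^{(|\bm\alpha|+2)}|_\infty h_{\max}^2$.

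The main obstacle is the derivative estimate: a crude argument subtracting only the affine Taylor part yields $O(h_{\max}^{2-|\bm\alpha|})$ and degrades for $|\bm\alpha|=1,2$, so everything hinges on capturing the extra cancellation. This is what \emph{synchronized} refers to, and it rests entirely on $\bm{m}_{\bm\beta}=0$ for $|\bm\beta|=1,3$ together with the $\bm x$-independence of the $\bm{m}_{\bm\beta}$ (which lets $E^h$ send polynomials to polynomials that are differentiated exactly). An alternative route is the Fourier/Strang--Fix framework: $\widehat{\bm{\phi_a}}$ has zeros of high order at the nonzero dual-lattice points, and the quasi-interpolation error symbol and its derivatives can be estimated directly, which reproduces the same orders and is the viewpoint of \cite{Li1996,Li1998synchronized}. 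I would prefer the moment-based argument above, since it keeps the dependence on the appropriate derivative of $u$ fully transparent.
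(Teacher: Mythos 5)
The paper does not actually prove this lemma: it invokes the Strang--Fix/Fourier machinery of \cite{Li1996,Li1998synchronized} (via \cref{rem:RKsupport}, the choice $\bm{a}=2\bm{h}$ makes $\bm{\phi_a}$ satisfy the Strang--Fix condition with constant correction function) and states the result without proof. Your moment-based argument is a genuinely different, self-contained route, and its mechanism is sound: with $\bm{a}=2\bm{h}$ the moments are $\bm{x}$-independent, so $\Pi^h$ maps polynomials to polynomials via the exact formula $\Pi^h[(\cdot-\bm{x_0})^{\bm\gamma}](\bm x)=\sum_{\bm\beta\le\bm\gamma}\binom{\bm\gamma}{\bm\beta}(-1)^{|\bm\beta|}\bm{m}_{\bm\beta}(\bm x-\bm{x_0})^{\bm\gamma-\bm\beta}$, and the vanishing of the odd moments (\cref{lem:MultiMoment}, \cref{eqn:MomentVal}) delivers exactly the cancellation that upgrades the naive $O(h_{\max}^{2-|\bm\alpha|})$ to $O(h_{\max}^{2})$; the remainder is handled by $|D^{\bm\alpha}\bm{\phi_a}|\le Ch_{\max}^{-|\bm\alpha|}$ (legitimate, since the cubic B-spline in \cref{eqn:CubicSpline} is $C^2$ and the grid is quasi-uniform) over the $O(1)$ active nodes. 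What your approach buys is transparency of the constant's dependence on $|u^{(|\bm\alpha|+2)}|_\infty$ and a proof valid at every point $\bm{x_0}$, not just at grid nodes; what the Fourier route buys is immediate generalization to $\bm{a}=2r_0\bm{h}$ and higher reproduction orders.

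One slip to repair: you fix the Taylor polynomial at degree $3$ for all three cases, which yields, e.g. for $|\bm\alpha|=0$, a bound $C(h_{\max}^2|u^{(2)}|_\infty+h_{\max}^4|u^{(4)}|_\infty)$ rather than the stated $C h_{\max}^2|u^{(2)}|_\infty$ (and similarly an extra $h_{\max}^3|u^{(4)}|_\infty$ term for $|\bm\alpha|=1$). The stated estimate is recovered by taking the Taylor polynomial of degree $|\bm\alpha|+1$ in each case: $E^h$ kills the affine part, the top homogeneous piece of $T_{|\bm\alpha|+1}$ is sent by $E^h$ to a polynomial of degree at most $|\bm\alpha|-1$ (constant for the quadratic piece, affine vanishing at $\bm{x_0}$ for the cubic piece) whose $\bm\alpha$-derivative vanishes, and the remainder of order $|\bm\alpha|+2$ gives $O(h_{\max}^{-|\bm\alpha|}\cdot h_{\max}^{|\bm\alpha|+2}|u^{(|\bm\alpha|+2)}|_\infty)$. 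This is a bookkeeping correction, not a gap in the idea.
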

Here and in the rest of the paper, we adopt the following notations for a function 
$u\in C^n(\R^\nd)$,
\begin{equation} \notag 
|u|_\infty = \sup_{\bm{x}\in \R^\nd} |u(\bm{x})|, \text{ and } |u^{(l)}|_{\infty} = \sup_{|\bm{\beta}|=l}\sup_{\bm{y} \in \mathbb{R}^{\nd}}|D^{\bm{\beta}} u(\bm{y})| \;(1\leq l \leq n).
\end{equation} 
Now we are ready to present the truncation error analysis of the RK collocation method for the nonlocal diffusion models.
\begin{lemma} \label{lem:consistency}
\textbf{(Uniform consistency)} Assume $u(\bm{x}) \in C^4(\R^{\nd})$, then
\begin{equation} \notag 
\left| r^h \mathcal{L}_{\delta}\Pi^hu - r^h \mathcal{L}_{\delta} u \right|_{h} \leq C h_{\max}^2 |u^{(4)}|_{\infty},
\end{equation}
 where $C$ is independent of $h_{\max}$ and $\delta$.  
\end{lemma}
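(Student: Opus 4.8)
The plan is to use linearity of $\mathcal{L}_\delta$ to write $r^h\mathcal{L}_\delta\Pi^h u - r^h\mathcal{L}_\delta u = r^h\mathcal{L}_\delta e$ with $e := \Pi^h u - u$, and then to bound the truncation error first \emph{pointwise} at the grid nodes before transferring to the $|\cdot|_h$ norm. Fixing a node $\bm{x}_{\bm k}$ and using the convolution form of the operator, $\mathcal{L}_\delta e(\bm{x}_{\bm k}) = \int_{B_\delta(\bm 0)}\rho_\delta(|\bm s|)\bigl(e(\bm{x}_{\bm k}+\bm s)-e(\bm{x}_{\bm k})\bigr)\,d\bm s$, I would Taylor-expand $e$ to first order with a second-order Lagrange remainder along the segment $\bm{x}_{\bm k}+t\bm s$. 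This is legitimate because $u\in C^4(\R^\nd)$ and the RK interpolant from \cref{eqn:RKShape} is a tensor product of cubic B-splines, so $\Pi^h u\in C^2$ and hence $e\in C^2$; importantly, this expansion requires control of derivatives of $e$ only up to order two. It is essential \emph{not} to expand further to extract $\Delta e$ explicitly, since the resulting third-order remainder would require $|e^{(3)}|_\infty$, which the synchronized convergence estimate does not supply.

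The key pointwise step then has two ingredients. First, the linear term $\nabla e(\bm{x}_{\bm k})\cdot\int_{B_\delta(\bm 0)}\rho_\delta(|\bm s|)\,\bm s\,d\bm s$ vanishes by the radial symmetry of $\rho_\delta$. The remaining quadratic remainder is bounded, up to a dimensional constant, by $\tfrac12\,|e^{(2)}|_\infty\int_{B_\delta(\bm 0)}\rho_\delta(|\bm s|)|\bm s|^2\,d\bm s$, and here the bounded-second-moment normalization \cref{eqn:boundedmoment} gives $\int_{B_\delta(\bm 0)}\rho_\delta(|\bm s|)|\bm s|^2\,d\bm s = 2\nd$, a value \emph{independent of} $\delta$. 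Combining this with \cref{lem:synchronizedconvergence} at $|\bm\alpha|=2$, which yields the global bound $|e^{(2)}|_\infty \le C\,|u^{(4)}|_\infty\,h_{\max}^2$, produces the uniform nodal estimate $|\mathcal{L}_\delta e(\bm{x}_{\bm k})|\le C\,h_{\max}^2\,|u^{(4)}|_\infty$ for every $\bm k$, with $C$ free of both $h_{\max}$ and $\delta$.

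Finally I would convert this nodal bound into the required $|\cdot|_h$ estimate: by \cref{lem:L2l2} it suffices to control $\|r^h\mathcal{L}_\delta e\|_{l^2}^2 = \prod_{j}h_j\sum_{\bm k}|\mathcal{L}_\delta e(\bm{x}_{\bm k})|^2$, and since the non-negligible contributions come from the $O(h_{\max}^{-\nd})$ nodes lying in a fixed bounded region, the factor $\prod_j h_j$ times the number of such nodes is $O(1)$, so the uniform pointwise bound carries over to $|\cdot|_h$ at the same order. The main obstacle, and the entire point of \emph{uniform} consistency, is the $\delta$-independence of the constant: naively $\mathcal{L}_\delta$ approximates $\Delta$, so one fears amplification of the $O(h_{\max}^2)$ interpolation error as $\delta\to0$. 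The two features that prevent this are that (i) the symmetric kernel annihilates the first-order term so that only $e^{(2)}$ enters, and (ii) synchronized convergence forces even the \emph{second} derivatives of the interpolation error to be $O(h_{\max}^2)$ rather than the generic $O(1)$, so that integrating against the fixed second moment $2\nd$ leaves the constant independent of $\delta$.
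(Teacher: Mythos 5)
Your proposal is correct and follows essentially the same route as the paper: both reduce to a pointwise bound on $\mathcal{L}_\delta(\Pi^h u - u)$ at the nodes by exploiting the symmetry of $\rho_\delta$ to kill the first-order term (the paper phrases this via the symmetrized second difference $E(\bm{x_k}+\bm{s})+E(\bm{x_k}-\bm{s})-2E(\bm{x_k})$, you via a Taylor remainder plus the vanishing odd moment — the same computation), then invoke the synchronized convergence bound $|D^{\bm\alpha}E|_\infty\le C|u^{(4)}|_\infty h_{\max}^2$ at $|\bm\alpha|=2$ together with the $\delta$-independent second moment \cref{eqn:boundedmoment}, and finally pass to the $|\cdot|_h$ norm. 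Your concluding remarks on why the constant is $\delta$-independent, and your slightly more explicit handling of the nodal-to-$|\cdot|_h$ conversion, are consistent with (and a bit more careful than) the paper's closing line.
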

\begin{proof}
Define the interpolation error of $u(\bm{x})$ as
\begin{equation} \notag 
E(\bm{x}) = \Pi^h u(\bm{x}) - u(\bm{x}). 
\end{equation}
Restricting to the grid  $ \square $, the truncation error is given by 
\begin{equation} \label{eqn:TrunError}
\begin{aligned}
\left| \mathcal{L}_{\delta} \left(\Pi^h u - u\right) (\bm{x_k})  \right| 
&=  \left|  \int_{B_{\delta}(\bm{0})} \rho_{\delta}(|\bm{s}|) \left( E(\bm{x_k}+\bm{s}) - E(\bm{x_k}) \right) d\bm{s} \right|\,.
\end{aligned}
\end{equation}
Now using  \cref{lem:synchronizedconvergence} on $E = \Pi^h u - u$, we have 
\begin{equation} \label{eqn:RemainderError}
\begin{aligned}
\left| E(\bm{x_k}+\bm{s}) + E(\bm{x_k}-\bm{s}) - 2E(\bm{x}) \right| 
     & \leq  C |\bm{s}|^2 \max_{|\bm{\alpha}|=2}  \big|D^{\bm{\alpha}}E \big|_{\infty}  \\
      & \leq  C |\bm{s}|^2 |u^{(4)}|_{\infty} h^2_{\textnormal{max}}\,.  \\
\end{aligned}
\end{equation}
Now combing \cref{eqn:TrunError},\cref{eqn:RemainderError}, and \cref{eqn:boundedmoment},
we arrive at 
\[\left| \mathcal{L}_{\delta} \left(\Pi^h u - u\right) (\bm{x_k})  \right|  
  \leq C h_{\textnormal{max}}^2 | u^{(4)} |_{\infty} \,. 
\]
where $C$ is a generic constant, independent of $h_{\max}$ and $\delta$.
Finally, the proof is finished by interpolating the truncation error. 
\end{proof}

\begin{remark} \label{rem:QuadraticExactness}
In \cite{Yang2018}, the authors consider finite difference schemes for nonlocal diffusion models and it is shown that the key to obtain uniform truncation error independent of the nonlocal parameter $\delta$ is the {\it quadratic exactness} of the scheme. That is, the numerical approximation to the nonlocal diffusion operator is exact for quadratic polynomials. We remark that  although linear RK approximation using shape function \cref{eqn:RKShape} can only reproduce multilinear polynomials,  it shifts quadratic polynomials by a constant, {\it i.e.,}
\begin{equation} \label{eqn:quadraticapproximation}
\begin{aligned}
\sum_{|\bm{\alpha}|=2} \Pi^h \bm{x}^{\bm{\alpha}}
 & = \sum_{|\bm{\alpha}|=2} \left[ \bm{m}_{\bm{\alpha}}(\bm{x}) + \bm{x}^{\bm{\alpha}} \right]\,,
\end{aligned}
\end{equation}
where $\bm{m}_{\bm{\alpha}}(\bm{x})\equiv C$ as a result of \cref{lem:MultiMoment} and \cref{eqn:MomentVal}. 
Therefore the quadratic exactness is satisfied, namely for $u(\bm{x})=\bm{x}^{\bm{\alpha}}$, $|\bm{\alpha}| = 2$, we have 
\begin{equation} \notag 
\sum_{|\bm{\alpha}|=2} \mathcal{L}_{\delta} \Pi^h u(\bm{x}) = \sum_{|\bm{\alpha}|=2} \mathcal{L}_{\delta} u(\bm{x}). 
\end{equation}

\end{remark}

The convergence theorem is now presented as a result of the stability (\cref{thm:stability})  and consistency (\cref{lem:consistency}). We will show first that the numerical solution converges to the nonlocal solution for fixed $\delta$ as mesh size decreases, 
and then the convergence to the local solution as $\delta$ and mesh size both decrease to zero. 
\begin{theorem} \label{thm:NonlocalConvergenceTHM}
\textbf{(Uniform Convergence to nonlocal solution)} For a fixed $\delta \in (0, \delta_0]$, assume the nonlocal exact solution $u^{\delta}$ is sufficiently smooth, i.e., $u^{\delta} \in C^4(\overline{\Omega_{\mathcal{\delta}}})$. Moreover, assume $|{u^{\delta}}^{(4)}|_{\infty}$ is uniformly bounded for every $\delta$. Let $u^{\delta, h}$ be the numerical solution of the collocation scheme \cref{eqn:CollocationScheme}. Then,
\begin{equation} \notag 
\| u^{\delta} - u^{\delta, h}  \|_{L^2(\Omega)} \leq C h_{\max}^2,
\end{equation}
where $C$ is independent of $h_{\max}$ and $\delta$.
\end{theorem}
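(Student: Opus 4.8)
The plan is to run the standard consistency-plus-stability (Lax-type) argument, so that \cref{thm:stability} and \cref{lem:consistency} combine to give the $O(h_{\max}^2)$ bound; the only genuine work is a careful treatment of the boundary so that the exact solution, once interpolated, lands exactly in the trial space $S(\square \cap \Om)$.

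First I would replace $u^\delta$ by a globally smooth comparison function. Since $u^\delta \in C^4(\overline{\Om_\delta})$ and $u^\delta \equiv 0$ on the interaction layer $\Om_{\cI}$, which is a band of width $\delta$ adjacent to $\Om$, every derivative $D^{\bm\alpha} u^\delta$ with $|\bm\alpha| \le 4$ vanishes throughout $\Om_{\cI}$ and hence, by continuity of derivatives up to order four, on $\partial\Om$. Extending $u^\delta$ by zero outside $\Om_\delta$ therefore produces $\bar u^\delta \in C^4(\R^\nd)$ with $|{\bar u^\delta}^{(4)}|_\infty = |{u^\delta}^{(4)}|_\infty$, which is uniformly bounded in $\delta$ by hypothesis. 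Because $\bar u^\delta$ vanishes at every grid point outside $\Om$, the coefficients of its interpolant $\Pi^h \bar u^\delta = i^h r^h \bar u^\delta$ are zero there, so $\Pi^h \bar u^\delta \in S(\square \cap \Om)$.

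Next I would form the trial-space error $e^h := u^{\delta,h} - \Pi^h \bar u^\delta \in S(\square \cap \Om)$ and identify its defect. For $\bm{x_k} \in \square \cap \Om$ the ball $B_\delta(\bm{x_k})$ lies in $\Om_\delta$, where $\bar u^\delta = u^\delta$, so $\mathcal{L}_\delta \bar u^\delta(\bm{x_k}) = \mathcal{L}_\delta u^\delta(\bm{x_k}) = -f_\delta(\bm{x_k})$; combined with the collocation identity \cref{eqn:CollocationScheme}, this gives
\begin{equation} \notag
r^h_\Om(-\mathcal{L}_\delta e^h) = r^h_\Om \mathcal{L}_\delta\bigl(\Pi^h \bar u^\delta - \bar u^\delta\bigr).
\end{equation}
The right-hand side is exactly the truncation error controlled by \cref{lem:consistency}. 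Restricting a sequence to $\square \cap \Om$ (with zero extension) can only decrease its $l^2$ norm, so, up to the equivalence constants of \cref{lem:L2l2},
\begin{equation} \notag
\bigl|r^h_\Om \mathcal{L}_\delta(\Pi^h \bar u^\delta - \bar u^\delta)\bigr|_h \le C \bigl|r^h \mathcal{L}_\delta(\Pi^h \bar u^\delta - \bar u^\delta)\bigr|_h \le C h_{\max}^2 |{\bar u^\delta}^{(4)}|_\infty \le C h_{\max}^2,
\end{equation}
with $C$ independent of $\delta$ and $h_{\max}$.

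Finally I would invoke stability. Since $e^h \in S(\square \cap \Om)$, \cref{thm:stability} yields $\|e^h\|_{L^2(\R^\nd)} \le C |r^h_\Om(-\mathcal{L}_\delta e^h)|_h \le C h_{\max}^2$, and a triangle inequality closes the argument:
\begin{equation} \notag
\|u^\delta - u^{\delta,h}\|_{L^2(\Om)} \le \|u^\delta - \Pi^h \bar u^\delta\|_{L^2(\Om)} + \|e^h\|_{L^2(\Om)},
\end{equation}
where the first (interpolation) term is $O(h_{\max}^2)$ by the synchronized-convergence estimate \cref{lem:synchronizedconvergence} and the second is at most $\|e^h\|_{L^2(\R^\nd)} \le C h_{\max}^2$. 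I expect the main obstacle to be the boundary bookkeeping of the second paragraph: justifying that extension by zero preserves $C^4$ regularity (so \cref{lem:consistency} applies) and that the resulting interpolant lies precisely in $S(\square \cap \Om)$, together with confirming that $\delta$-independence of the constants survives both the restriction $r^h \to r^h_\Om$ and the application of stability. The remaining manipulations are routine.
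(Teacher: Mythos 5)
Your proposal is correct and follows essentially the same route as the paper: extend $u^\delta$ by zero to a $C^4(\mathbb{R}^\nd)$ function, observe that the collocation residual of $\Pi^h u^\delta - u^{\delta,h}$ reduces to the truncation error of \cref{lem:consistency}, apply the stability estimate of \cref{thm:stability}, and finish with the triangle inequality and the interpolation error. The extra bookkeeping you flag (the interpolant landing in $S(\square\cap\Omega)$, and passing from $r^h$ to $r^h_\Omega$ via \cref{lem:L2l2}) is handled more tersely in the paper but is exactly the same argument.
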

\begin{proof}
Notice that since $u^{\delta} =0 $ on $\Omega_{\cI}$ and $u^\delta \in C^4 (\overline{\Omega\cup\Omega_\cI})$,
we can extend $u^{\delta}$ to $\R^\nd$ by zero such that $u^\delta \in C^4(\R^{\nd})$. 
From the RK collocation scheme \cref{eqn:CollocationScheme} and the nonlocal equation \cref{eqn:NonlocalEqn}, we have
\[ 
- r^h_{\Omega}\mathcal{L}_{\delta} u^{\delta,h} = r^h_{\Omega} {f_\del} = - r^h_{\Omega}\mathcal{L}_{\delta} u^{\delta} \,.
\]
Combining  \cref{thm:stability},  \cref{lem:consistency} and the above equation, we obtain
\begin{equation} \notag 
\begin{aligned}
\| \Pi^h u^{\delta}  - u^{\delta,h}  \|_{L^2(\R^\nd)} &    \leq  C \left| r^h_{\Omega} \mathcal{L}_{\delta} \left(\Pi^h u^{\delta} - u^{\delta,h}  \right)  \right|_{h}, \\
& \leq C \left| r^h_{\Omega} \mathcal{L}_{\delta} \Pi^h u^{\delta} - r^h_{\Omega} \mathcal{L}_{\delta} u^{\delta, h}  \right|_{h},\\
& \leq C \left| r^h_{\Omega} \mathcal{L}_{\delta} \Pi^h u^{\delta} - r^h_{\Omega} \mathcal{L}_{\delta} u^{\delta}  \right|_{h},\\
& \leq C  h_{\max}^2 \, .
\end{aligned}
\end{equation}

Finally, from the triangle inequality, we arrive at
\begin{equation} \notag 
\begin{aligned}    
\|u^{\delta}  - u^{\delta,h}  \|_{L^2(\R^\nd)} \leq \| u^{\delta}  - \Pi^h u^{\delta}  \|_{L^2(\R^\nd)}  + \| \Pi^h u^{\delta}  - u^{\delta,h}  \|_{L^2(\R^\nd)} 
 \leq C h^2_{\max}.
\end{aligned}
\end{equation}
where we have used the approximation property of the RK approximation. 
\end{proof}

Next, we show that the RK collocation scheme converges to the correct local limit model. We start by bounding the truncation error between the collocation scheme and local limit of the nonlocal model. 
\begin{lemma} \label{lem:DiscreteME}
\textbf{(Discrete model error I)} Assume $u(\bm{x}) \in C^4(\R^{\nd})$, then
\begin{equation} \notag 
\left| r^h \mathcal{L}_{\delta}\Pi^hu - r^h \mathcal{L}_0 u \right|_h \leq C |u^{(4)}|_{\infty} (h_{\max}^2 + \delta^2) ,
\end{equation}
where $C$ is independent of $h_{\max}$ and $\delta$. 
\end{lemma}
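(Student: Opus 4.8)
The plan is to split the error by the triangle inequality into a purely discrete piece and a purely model piece,
\[
\left| r^h \mathcal{L}_{\delta}\Pi^hu - r^h \mathcal{L}_0 u \right|_h
\le \left| r^h \mathcal{L}_{\delta}\Pi^hu - r^h \mathcal{L}_{\delta} u \right|_h
+ \left| r^h \mathcal{L}_{\delta} u - r^h \mathcal{L}_0 u \right|_h .
\]
The first term is exactly the quantity estimated in \cref{lem:consistency}, so it is bounded by $C h_{\max}^2 |u^{(4)}|_\infty$ with a constant independent of $\delta$. It therefore remains to control the nonlocal-to-local model error $\left| r^h \mathcal{L}_{\delta} u - r^h \mathcal{L}_0 u \right|_h$ by $C\delta^2 |u^{(4)}|_\infty$, and this is the heart of the lemma.

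For the model error I would first establish the pointwise bound $|\mathcal{L}_{\delta} u(\bm{x}) - \mathcal{L}_0 u(\bm{x})| \le C\delta^2 |u^{(4)}|_\infty$ uniformly in $\bm{x}$. Using the evenness of the radial kernel $\rho_\delta$, I rewrite
\[
\mathcal{L}_\delta u(\bm{x}) = \frac{1}{2}\int_{B_\delta(\bm{0})} \rho_\delta(|\bm{s}|)\bigl(u(\bm{x}+\bm{s}) + u(\bm{x}-\bm{s}) - 2u(\bm{x})\bigr)\,d\bm{s},
\]
so that the odd-order terms in the Taylor expansion of the symmetric second difference cancel. The leading quadratic term reproduces the Laplacian exactly: by radial symmetry the mixed moments vanish and $\int_{B_\delta(\bm{0})} \rho_\delta(|\bm{s}|) s_i^2\,d\bm{s} = \tfrac{1}{\nd}\int_{B_\delta(\bm{0})} \rho_\delta(|\bm{s}|)|\bm{s}|^2\,d\bm{s} = 2$ by the moment normalization \cref{eqn:boundedmoment}, whence $\tfrac{1}{2}\sum_{i,j}\partial_{ij}u(\bm{x})\int_{B_\delta(\bm{0})} \rho_\delta(|\bm{s}|) s_i s_j\,d\bm{s} = \Delta u(\bm{x}) = \mathcal{L}_0 u(\bm{x})$.

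The remaining fourth-order Taylor remainder is bounded by $C|\bm{s}|^4|u^{(4)}|_\infty$, so that
\[
\left|\mathcal{L}_\delta u(\bm{x}) - \mathcal{L}_0 u(\bm{x})\right| \le C |u^{(4)}|_\infty \int_{B_\delta(\bm{0})} \rho_\delta(|\bm{s}|)|\bm{s}|^4\,d\bm{s}.
\]
The key computation is that the rescaling \cref{eqn:NonlocalKernelScaling} together with $|\bm{t}| \le 1$ on the support of $\rho$ gives $\int_{B_\delta(\bm{0})} \rho_\delta(|\bm{s}|)|\bm{s}|^4\,d\bm{s} = \delta^2 \int_{B_1(\bm{0})} \rho(|\bm{t}|)|\bm{t}|^4\,d\bm{t} \le 2\nd\,\delta^2$, yielding the desired $O(\delta^2)$ pointwise bound. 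Finally, just as in \cref{lem:consistency}, I would pass from this pointwise estimate to the $|\cdot|_h$ norm by interpolation, using the norm equivalence of \cref{lem:L2l2} on the bounded region where the data is supported.

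I expect the main obstacle to be the model-error term rather than the discrete term: one must use the symmetrization carefully so that the first- and third-order contributions drop out (otherwise one recovers only $O(\delta)$), verify that the second-moment normalization produces exactly $\Delta$ rather than a scalar multiple of it, and confirm that the fourth moment of the rescaled kernel scales like $\delta^2$. Given \cref{lem:consistency}, the discrete-consistency half of the estimate is immediate, so the novelty lies entirely in the uniform $\delta^2$ nonlocal-to-local consistency.
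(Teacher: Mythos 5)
Your proposal is correct and follows essentially the same route as the paper: the identical triangle-inequality split into the discrete consistency term (handled by \cref{lem:consistency}) plus the nonlocal-to-local model error term. The only difference is that the paper simply invokes "the continuum property of the nonlocal operators" for the $O(\delta^2)$ model error, whereas you supply the standard symmetrization/Taylor/moment-scaling argument explicitly — a correct filling-in of a detail the paper leaves to the literature.
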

\begin{proof}
From \cref{lem:consistency} and the continuum property of the nonlocal operators, we have
\begin{align*} 
\left| r^h \mathcal{L}_{\delta} \Pi^hu - r^h \mathcal{L}_{0}  u \right|_{h} & \leq  \left|  r^h \mathcal{L}_{\delta} \Pi^hu -  r^h \mathcal{L}_{\delta} u \right|_{h}  + \left|  r^h \mathcal{L}_{\delta} u -  r^h \mathcal{L}_{0} u \right|_{h}, \\ 
& \leq C |u^{(4)}|_{\infty} (h_{\max}^2 +  \delta^2),, 
\end{align*}
\end{proof}

Combining  \cref{thm:stability} and \cref{lem:DiscreteME}, we have uniform convergence (asymptotic compatibility) to the local limit. 
\begin{theorem} \textbf{(Asymptotic compatibility)} \label{thm:AC}
Assume the local exact solution $u^0$ is sufficiently smooth, i.e., $u^0 \in C^4(\overline{ \Omega_{\delta_0}})$. For any $\delta \in (0,  \delta_0]$, $u^{\delta, h}$ is the numerical solution of the collocation scheme \cref{eqn:CollocationScheme}, then,
\begin{equation} \notag 
\| u^0 - u^{\delta, h} \|_{L^2(\Omega)} \leq C (h_{\max}^2 + \delta^2).
\end{equation}
\end{theorem}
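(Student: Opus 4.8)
The plan is to follow the proof of \cref{thm:NonlocalConvergenceTHM} almost verbatim, with two substitutions: replace the fixed-$\delta$ truncation estimate \cref{lem:consistency} by the discrete model error \cref{lem:DiscreteME} (so that the residual bound sees $h_{\max}$ and $\delta$ simultaneously), and use the source-consistency hypothesis \cref{eqn:2ndorderassump} to pass from $f_\delta$ to $f_0$. First I would use the regularity hypothesis $u^0\in C^4(\overline{\Omega_{\delta_0}})$ together with $u^0|_{\partial\Omega}=0$ to fix a $C^4(\R^{\nd})$ extension of $u^0$ (compactly supported), still written $u^0$, whose fourth derivatives $|{u^0}^{(4)}|_{\infty}$ are finite and $\delta$-independent.

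Second, I would set $e:=\Pi^h u^0 - u^{\delta,h}$ and apply the stability estimate \cref{thm:stability}, $\|e\|_{L^2(\R^{\nd})}\le C\,\left|r^h_{\Omega}(-\mathcal{L}_{\delta}e)\right|_h$ (this presupposes $e\in S(\square\cap\Omega)$, a point I return to below). The residual is rearranged using the scheme $-r^h_{\Omega}\mathcal{L}_{\delta}u^{\delta,h}=r^h_{\Omega}f_\delta$ and the local equation $-r^h_{\Omega}\mathcal{L}_{0}u^0=r^h_{\Omega}f_0$:
\begin{align*}
-r^h_{\Omega}\mathcal{L}_{\delta}e
&= -r^h_{\Omega}\mathcal{L}_{\delta}\Pi^h u^0 + r^h_{\Omega} f_\delta \\
&= \left(r^h_{\Omega}\mathcal{L}_{0} u^0 - r^h_{\Omega}\mathcal{L}_{\delta}\Pi^h u^0\right) + \left(r^h_{\Omega} f_\delta - r^h_{\Omega} f_0\right).
\end{align*}
The first bracket is controlled by \cref{lem:DiscreteME} by $C(h_{\max}^2+\delta^2)|{u^0}^{(4)}|_{\infty}$, and the second by \cref{eqn:2ndorderassump}, since the $\left|\cdot\right|_h$ norm of a grid restriction is bounded by its sup-norm up to a $\delta$-independent constant. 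Hence $\left|r^h_{\Omega}(-\mathcal{L}_{\delta}e)\right|_h\le C(h_{\max}^2+\delta^2)$, and therefore $\|e\|_{L^2(\R^{\nd})}\le C(h_{\max}^2+\delta^2)$. Third, I would close with the triangle inequality and the $O(h_{\max}^2)$ approximation property of $\Pi^h$, namely $\|u^0-u^{\delta,h}\|_{L^2(\Omega)}\le \|u^0-\Pi^h u^0\|_{L^2(\Omega)}+\|e\|_{L^2(\Omega)}\le C(h_{\max}^2+\delta^2)$.

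The hard part is the boundary-condition mismatch. The stability bound \cref{thm:stability} applies only to $e\in S(\square\cap\Omega)$, which forces the exterior nodal values of $e$, and hence of $\Pi^h u^0$, to vanish; yet the $C^4$ extension needed to invoke \cref{lem:DiscreteME} takes nonzero values $u^0(\bm{x_k})=O(\textnormal{dist}(\bm{x_k},\partial\Omega))$ at the grid nodes of $\Omega_{\cI}$, because $u^0$ vanishes only on $\partial\Omega$ and not on the $\delta$-thick layer $\Omega_{\cI}$ where $u^{\delta,h}$ is pinned to zero. Reconciling ``$e\in S(\square\cap\Omega)$'' with ``use the smooth extension in \cref{lem:DiscreteME}'' is the crux: naively zeroing the exterior nodal values introduces a boundary correction whose image under $\mathcal{L}_{\delta}$ behaves like a discrete Laplacian of a layer of width $O(h)$ and is not uniformly small pointwise near $\partial\Omega$. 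I expect this to be dispatched by showing the boundary layer is negligible in the $L^2(\Omega)$-error, either by a direct estimate of the correction's contribution through the stable inverse, or by routing the comparison through the nonlocal solution $u^{\delta}$ (which \emph{does} vanish on $\Omega_{\cI}$, so $\Pi^h u^{\delta}\in S(\square\cap\Omega)$ automatically) and then invoking the continuum model error $\|u^{\delta}-u^0\|_{L^2(\Omega)}=O(\delta^2)$, the interior $L^2$ nonlocal-to-local rate that absorbs the $O(\delta)$ pointwise boundary layer.
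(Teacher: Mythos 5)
Your main argument is exactly the paper's proof of \cref{thm:AC}: apply \cref{thm:stability} to $e=\Pi^h u^0 - u^{\delta,h}$, rewrite the residual as $\bigl(r^h_{\Omega}\mathcal{L}_{0}u^0 - r^h_{\Omega}\mathcal{L}_{\delta}\Pi^h u^0\bigr) + \bigl(r^h_{\Omega}f_\delta - r^h_{\Omega}f_0\bigr)$, bound the two pieces by \cref{lem:DiscreteME} and \cref{eqn:2ndorderassump}, and finish with the triangle inequality and the $O(h_{\max}^2)$ interpolation estimate; so on the main line there is nothing to add. The boundary-layer issue you flag in your last paragraph is genuine, but the paper's own proof does not resolve it either: it applies \cref{thm:stability} to $\Pi^h u^0 - u^{\delta,h}$ without comment, implicitly treating $u^0$ the way \cref{thm:NonlocalConvergenceTHM} explicitly treats $u^\delta$, namely assuming the extension of $u^0$ by zero outside $\Omega$ is still $C^4(\R^{\nd})$, so that $\Pi^h u^0\in S(\square\cap\Omega)$ and \cref{lem:DiscreteME} can be invoked for the same function. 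This is a compatibility hypothesis on the data (the local solution must be consistent with the homogeneous volumetric constraint on $\Omega_{\cI}$), not something the proof establishes, and the paper's numerics in \cref{subsec:boundary_numer} confirm that when it fails the observed rate degrades. Neither of your two proposed repairs appears in the paper, and the second one (routing through $u^\delta$) would itself require the same compatibility to obtain $\|u^\delta-u^0\|_{L^2(\Omega)}=O(\delta^2)$. In short: your proof is the paper's proof, and your extra worry is a fair criticism of the stated hypotheses rather than a step you are missing.
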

\begin{proof} 
 First, recall that
\[ 
- r^h_{\Omega}\mathcal{L}_{\delta} u^{\delta,h} = r^h_{\Omega} {f_\del} \quad \text{and} \quad - r^h_{\Omega}\mathcal{L}_{0} u^{0}=r^h_{\Omega} {f_0}  \, ,
\]
where $\max_{\bm{x}}|f_\del(\bm{x}) -f_0(\bm{x})|=O(\del^2)$.
Then, from \cref{thm:stability}, \cref{lem:DiscreteME}, we obtain
\begin{align*}
\| \Pi^h u^{0}  - u^{\delta,h}  \|_{L^2(\R^\nd)} &    \leq  C \left| r^h_{\Omega} \mathcal{L}_{\delta} \left(\Pi^h u^{0} - u^{\delta,h}  \right)  \right|_{h} ,\\
& \leq C \left| r^h_{\Omega} \mathcal{L}_{\delta} \Pi^h u^{0} - r^h_{\Omega} \mathcal{L}_{\delta} u^{\delta, h}  \right|_{h},\\
& \leq C \left| r^h_{\Omega} \mathcal{L}_{\delta} \Pi^h u^{0} - r^h_{\Omega} \mathcal{L}_{0} u^{0} + r^h_{\Omega} f_\del -   r^h_{\Omega} f_0 \right|_{h},\\
& \leq C ( h_{\max}^2 +\del^2) \, .
\end{align*}

Finally, we finish the proof by applying the triangle inequality
\begin{equation*} 
\|u^{0}  - u^{\delta,h}  \|_{L^2(\R^\nd)} \leq \| u^{0}  - \Pi^h u^{0}  \|_{L^2(\R^\nd)}  +  \| \Pi^h u^{0}  - u^{\delta,h}  \|_{L^2(\R^\nd)} \leq C (h_{\max}^2 + \delta^2).
\end{equation*}
\,
\end{proof}

\section{Quasi-discrete nonlocal diffusion operator} \label{sec:QuasiDiscrete}

The RK collocation scheme introduced in the previous sections is asymptotically compatible, but it is not practical in the sense that it is rather difficult to evaluate the integral in the nonlocal diffusion operator, especially if the nonlocal kernel is singular. In \cite{Pasetto2018}, two Gauss quadrature schemes are investigated and high-order Gauss quadrature rules are necessary for both schemes to obtain algebraic convergence.
See also \cref{sec:NumericalExample} for more details on the Gauss quadrature schemes. 
To mitigate this computational complexity, in this section we introduce a new nonlocal diffusion operator acting on continuous function where the integral is replaced by finite summation of point evaluations in the horizon. We call it the quasi-discrete nonlocal diffusion operator. Whenever the local limit is concerned, it is much easier to use the quasi-discrete operator than to use Gauss quadrature for the integral. We will show that the numerical solution for the quasi-discrete nonlocal diffusion converges to the solution of local equation as $\delta$ and mesh size both approach zero.

\subsection{Quasi-discrete nonlocal diffusion operator} \label{subsec:QDND}
For each $\bm{x}$, we use a finite number of quadrature points in the $\delta$-neighborhood of $\bm{x}$ to approximation the integral in \cref{eqn:NonlocalOper}. Assume $u(\bm{x}) \in C^0(\Omega_\delta)$, we define the quasi-discrete nonlocal diffusion operator $\mathcal{L}^{\epsilon}_{\delta}$ as
\begin{equation} \label{eqn:DiscreteNonlocalOperator}
\mathcal{L}^{\epsilon}_{\delta}u(\bm{x}) = 2\sum\limits_{\bm{s} \in B^{\epsilon}_{\delta}(\bm{0})}  \omega_{\delta}(\bm{s}) \rho_{\delta}(|\bm{s}|) (u(\bm{x}+\bm{s})-u(\bm{x})), \quad \forall \, \bm{x} \in \Omega 
\end{equation} 
where $\omega_{\delta}(\bm{s})$ is the quadrature weight at the quadrature point $\bm{s}$ and $ B^{\epsilon}_{\delta}(\bm{0}) $ is a finite collection of symmetric quadrature points $\bm{s}$ in the ball of radius $\delta$ about $\bm{0}$. 
The superscript $\epsilon$ indicates the spacing of the quadrature points (see \cref{fig:DiscreteBall} as an example). 
 $\epsilon$ is independent of the spatial discretization applied to $u$ later in \cref{sec:DCAnalysis}.
In this work, we assume that the number of quadrature points, $N_{\nd}$, in  $B^{\epsilon}_{\delta}(\bm{0}) $ is fixed  and  only depends on the dimension, $\nd$. There are some restrictions on the quadrature points. First we require that quadrature points being symmetrically distributed so that if $ \bm{x} = (x_1, \ldots, x_\nd ) \in  B^{\epsilon}_{\delta}(\bm{0})$, then $-\bm{e}_j \odot \bm{x}\in B^{\epsilon}_{\delta}(\bm{0}) $ ($\bm{e}_j$ is the unit vector with $j$-th component to be $1$) for any $j\in \{1,\ldots, \nd\}$, and any reordering of $(x_1, \ldots, x_\nd )$ also belongs to $B^{\epsilon}_{\delta}(\bm{0})$. This assumption leads to $N_\nd \geq 2\nd$ and it guarantees positiveness of the weights which will be explained in \cref{subsec:QW_RK}.  Secondly, we require that $N_\nd\geq 4\nd$ with the reasons to be explained in the proof of \cref{thm:DStability}. If we use a set of uniform distributed quadrature points as shown in \cref{fig:DiscreteBall}, then a fixed  number $N_\nd$ implies that the ratio $\delta/\epsilon$ is a fixed number.  

\begin{figure}[H]
\centering
\scalebox{1}{\includegraphics{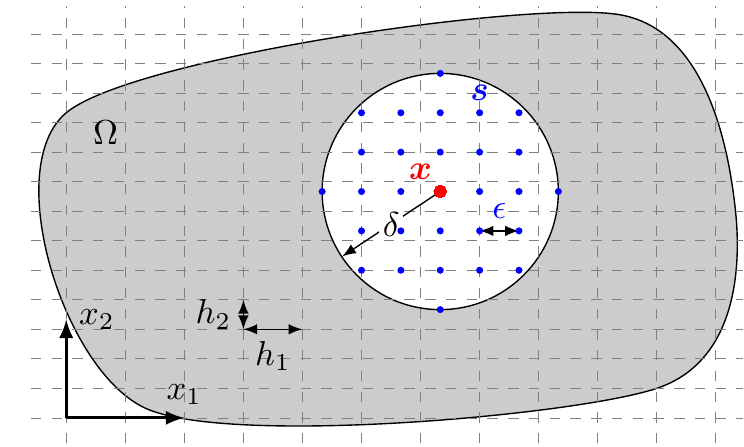}}
\caption{An example of quadrature points (blue dots) distributed in the $\del$-neighborhood of a given point $\bm{x}\in\Omega$ (red dot). $\ep$ is the spacing of the quadrature points. The dashed lines form the grid for the RK collocation scheme  that will be introduced in \cref{sec:DCAnalysis}. Notice that the quadrature points are independent of the grid points. In this example, $\delta =3 \epsilon$.}
\label{fig:DiscreteBall}
\end{figure}

Note that although the evaluation of $\mathcal{L}^{\epsilon}_{\delta}(\bm{x})$ only needs a finite summation, it is actually a continuous function in $\bm{x}$, and thus it is called the quasi-discrete operator.  Next we impose the conditions on  $\mathcal{L}^{\epsilon}_{\delta}$ so that the numerical solution  converges to the correct local limit. Recall the nonlocal kernel $\rho_{\delta}(|\bm{s}|)$ introduced in \cref{sec:NonlocalEquation} has a bounded second-order moment as in \cref{eqn:boundedmoment}. Therefore, the nonlocal diffusion operator $\mathcal{L}_{\delta}$ acting on constant, linear and quadratic polynomials has the following results
\begin{equation} \label{eqn:CQuadMoment}
\mathcal{L}_{\delta} \bm{x}^0 = 0; \quad
\mathcal{L}_{\delta} \bm{x}^{\bm{\alpha}} = 0, \textnormal{ for } |\bm{\alpha}|=1; \quad 
\sum_{|\bm{\alpha}|=2}\mathcal{L}_{\delta} \bm{x}^{\bm{\alpha}} = 2{\nd}.
\end{equation}
We proceed to design quadrature weights $\omega_{\delta}(\bm{s})$ such that $\mathcal{L}^{\epsilon}_{\delta}$ satisfies the same conditions as \cref{eqn:CQuadMoment}, 
\begin{equation} \label{eqn:DQuadMoment}
\mathcal{L}^{\epsilon}_{\delta} \bm{x}^0 = 0; \quad
\mathcal{L}^{\epsilon}_{\delta} \bm{x}^{\bm{\alpha}} = 0, \textnormal{ for } |\bm{\alpha}|=1; \quad 
\sum_{|\bm{\alpha}|=2}\mathcal{L}^{\epsilon}_{\delta} \bm{x}^{\bm{\alpha}} = 2{\nd}.
\end{equation}
There are many ways to design the quadrature weights $\omega_{\delta}(\bm{s})$. We present two approaches, one is using the RK approximation \cite{Pasetto2019} and the other is in the framework of GMLS as in \cite{Trask2019}. We modify the RK approach and emphasize the positivity of the weights in the following subsection such that the convergence analysis shown in \cref{sec:DCAnalysis} works for both RK approach and GMLS framework. 

\subsection{Quadrature weights using the RK approximation} \label{subsec:QW_RK}
Note that
 \cref{eqn:DQuadMoment} can be seen as a reproducing condition for polynomials up to second order. 
 We can thus solve for the quadrature weights $\omega_{\delta}$ under the same framework presented in \cref{sec:RKCollocation}. Due to the scaling of the nonlocal kernel $\rho_{\delta}(|\bm{s}|)$ as shown in \cref{eqn:NonlocalKernelScaling}, $\omega_{\delta}(\bm{s})$ has the following scaling
\begin{equation} \notag
\omega_{\delta}(\bm{s}) =\delta^{\nd} \omega \left( \frac{\bm{s}}{\delta} \right),
\end{equation}
where $\omega(\bm{s})$ is the quadrature weight at $\bm{s} \in B^{\epsilon_1}_1(\bm{0})$ and $\epsilon_1 =\epsilon/\delta$ by scaling. $B^{\epsilon_1}_1(\bm{0})$ is a finite collection of quadrature points in the unit ball.  Since we assume that the number of points in the set $B^\epsilon_\delta(\bm{0})$ is fixed, this implies $\epsilon_1$ is a fixed number. We next present the construction of $\omega(\bm{s})$ in $B^{\epsilon_1}_1(\bm{0})$ so that for a general horizon $\delta$, $\omega_{\delta}(\bm{s})$ can be obtained by rescaling. 
Without loss of generality, we assume $\bm{x}=\bm{0}$ in \cref{eqn:DiscreteNonlocalOperator}. 
If we define
 \begin{equation} \label{eqn:defn_f}
  f(\bm{s})= \rho(\bm{s}) (u( \delta\bm{s})- u(\bm{0}))
 \end{equation}
  the problem is then equivalent to finding out the weights $\omega(\bm{s})$ such that
\begin{equation} \label{eqn:rep_cond_general}
\sum_{\bm{s}\in B_1^{\epsilon_1}(\bm{0})} \omega(\bm{s})f(\bm{s}) d\bm{s}  = \int_{B_1(\bm{0})} f(\bm{s}) d\bm{s}\,,
\end{equation}
where $f$ is taken from a certain finite dimensional space. 
Condition \cref{eqn:DQuadMoment} is now interpreted as  \cref{eqn:rep_cond_general} for $f$ in the form 
of \cref{eqn:defn_f} where $u$ is taken from the space of polynomials up to second order. 
Notice that for $u$ being a constant function,  $f$ is identically zero and \cref{eqn:rep_cond_general} is satisfied trivially. Therefore 
we only need to consider \cref{eqn:rep_cond_general}  for 
 \begin{equation}\label{eqn:form_of_f}
  f(\bm{s})= \rho(\bm{s}) V(\bm{s}), \text{ where }V(\bm{s})= \bm{s}^{\bm{\alpha}}  , 
 1\leq |\bm{\alpha} |\leq 2. 
 \end{equation}

Now we use a similar approach as the RK approximation presented in \cref{sec:RKCollocation} to
find the weights. Define 
\begin{equation} \label{eqn:QuadratureWeight} 
\omega_{}(\bm{s}) = C(\bm{s}) \phi_{}(|\bm{s}|),
\end{equation}
where $\phi$ is taken to be the cubic B-spline function \cref{eqn:CubicSpline} and
$C(\bm{s})$ is the correction function that takes the form  
\begin{equation} \label{eqn:Correction_for_QWeight}
C(\bm{s})=\bm{\hat{H}}^T(\bm{s}) \bm{b},
\end{equation}
where $\bm{\hat{H}}(\bm{s})=[\{\bm{s}^{\bm{\alpha}} \}_{1\leq|\bm{\alpha}| \leq 2}]^T$ 
and $\bm{b}$ is a constant vector to be determined. 
For example, in two dimensions we have
\begin{equation} \notag 
\bm{\hat{H}}(\bm{s})  = [s_1, s_2, s_1^2, s_1s_2,s_2^2]^T.
\end{equation}
Substituting \cref{eqn:form_of_f,eqn:QuadratureWeight,eqn:Correction_for_QWeight}
into \cref{eqn:rep_cond_general}, we obtain the following system for $\bm{b}$: 
\begin{equation}  \label{eqn:MomentMatrix}
\bm{\widetilde{M}} \bm{b}=\widetilde{\bm{H}}_{\nd}, 
\end{equation}
where  $\bm{\widetilde{M}}$ is the moment matrix given by
\begin{equation}  \label{eqn:DiscMomentMatrix}
\bm{\widetilde{M}} = \sum\limits_{\bm{s} \in B^{\epsilon_1}_{1}(\bm{0})} \bm{\hat{H}}(\bm{s}) \rho_{}(|\bm{s}|)\phi_{}(|\bm{s}|) \bm{\hat{H}}^T(\bm{s}) . 
\end{equation}
and $\widetilde{\bm{H}}_{\nd}$ is the constant vector consisting the right hand side of \cref{eqn:DQuadMoment} when taking $f(\bm{s}) = \rho(\bm{s})\bm{\hat{H}}(\bm{s}) $.
For example, for $\nd = 2$
\begin{equation} \notag 
\widetilde{\bm{H}}_{2} = [0, 0, 1, 0, 1]^T.
\end{equation}
We note that each entry of the moment matrix is given by 
\begin{equation} \label{eqn:DMoment}
\widetilde{m}_{\alpha_1 \ldots \alpha_d}  = \sum_{\bm{s} \in B^{\epsilon_1}_{1}(\bm{0})} \bm{s}^{\bm{\alpha}}\rho_{}(|\bm{s}|) \phi_{}(|\bm{s}|)\, \text{ for } \bm{\alpha}=(\alpha_1,\ldots, \alpha_d) \text{ and } 2\leq |\bm{\alpha}| \leq 4  \,.
\end{equation}
For example, in two dimensions,  \cref{eqn:MomentMatrix} has the explicit form  
\begin{equation} \label{eqn:2dMomentMatrix}
\begin{aligned}
              \begin{bmatrix} %
               \widetilde{m}_{20} & \widetilde{m}_{11} & \widetilde{m}_{30} & \widetilde{m}_{21} & \widetilde{m}_{12}\\
               \widetilde{m}_{11} & \widetilde{m}_{02} & \widetilde{m}_{21} & \widetilde{m}_{12} & \widetilde{m}_{03}\\
                \widetilde{m}_{30} & \widetilde{m}_{21} & \widetilde{m}_{40} & \widetilde{m}_{31} & \widetilde{m}_{22}\\
                \widetilde{m}_{21} & \widetilde{m}_{12} & \widetilde{m}_{31} & \widetilde{m}_{22} & \widetilde{m}_{13}\\
               \widetilde{m}_{12} & \widetilde{m}_{03} & \widetilde{m}_{22} & \widetilde{m}_{13} & \widetilde{m}_{04}\\
              \end{bmatrix}
              \begin{bmatrix}
                 b_{10} \\
                 b_{01} \\
                 b_{20} \\
                 b_{11} \\
                 b_{02} \\
              \end{bmatrix} 
               &=
              \begin{bmatrix}
              0 \\
              0 \\
              1 \\
              0 \\
              1 \\
              \end{bmatrix},
\end{aligned}
\end{equation}
where $\bm{b} =[b_{10}, b_{01}, b_{20}, b_{11}, b_{02}]^T$.
If $\bm{\widetilde{M}}$ is invertible, then we have 
\begin{equation} \label{eqn:BadQuadratureWeights}
\omega(\bm{s}) = \phi(|\bm{s}|)\bm{\hat{H}}^T(\bm{s}) \bm{\widetilde{M}}^{-1}\widetilde{\bm{H}}_{\nd}.
\end{equation}
Otherwise, the inversion must be interpreted in a reasonable way. 
From \cref{eqn:BadQuadratureWeights}, it is unknown if the weights are strictly non-negative.
Then, under symmetry assumptions of quadrature points, we have a simple procedure to find a set of positive weights without the inversion of the moment matrix. 
It turns out the positivity  of weights $\omega(\bm{s}) $ is critical for the stability of the numerical method. 

By the symmetry assumption of the quadratic points presented in \cref{subsec:QDND}, we can show that the moment $\widetilde{m}_{\alpha_1 \ldots \alpha_{\nd}}=0$ 
if $\alpha_j$ ($j\in \{ 1,\ldots,\nd\}$) is an odd number, and $\widetilde{m}_{\alpha_1 \ldots \alpha_{\nd}}$ equals $\widetilde{m}_{\beta_1 \ldots \beta_{\nd}}$
if $(\beta_1, \ldots, \beta_{\nd})$ is a reordering of $(\alpha_1, \ldots, \alpha_{\nd})$. 
Therefore, we see immediately from \cref{eqn:2dMomentMatrix} that
\begin{equation} \notag
b_{10} = b_{01} = b_{11} = 0,
\end{equation}
and the system \cref{eqn:2dMomentMatrix} is reduced to
\begin{equation} \label{eqn:2dReducedMomentMatrix}
\begin{aligned}
  \widetilde{m}_{40} b_{20} + \widetilde{m}_{22} b_{02} &= 1, \\
 \widetilde{m}_{22} b_{20} + \widetilde{m}_{04} b_{02} &= 1. \\
\end{aligned}
\end{equation}
Notice that $\widetilde{m}_{40}=\widetilde{m}_{04}$ by the symmetry assumption and \cref{eqn:2dReducedMomentMatrix} may have multiple solutions if 
$\widetilde{m}_{40}= \widetilde{m}_{22}$. 
We can at least find one solution by adding another constraint $ b_{20}= b_{02}$. 
  Then we obtain
\begin{equation} \label{eqn:CorrectionVector}
b_{20} = b_{02} = \frac{1}{\widetilde{m}_{22}+\widetilde{m}_{40}}.
\end{equation}
Substituting \cref{eqn:CorrectionVector,eqn:Correction} into \cref{eqn:QuadratureWeight}, we arrive at an explicit expression of the quadrature weights
\begin{equation} \label{eqn:weight}
\omega_{}(\bm{s}) =\frac{\phi_{}(|\bm{s}|)}{\widetilde{m}_{22}+\widetilde{m}_{40}}|\bm{s}|^2 .
\end{equation}
In general, the weights are written as
\begin{equation}  \notag 
\omega(\bm{s}) = 
\begin{cases}
\displaystyle \frac{\phi_{}(|\bm{s}|)}{\widetilde{m}_{4}}|\bm{s}|^2 , \quad & \nd = 1, \\[10pt]
\displaystyle  \frac{\phi_{}(|\bm{s}|)}{(\nd-1)\widetilde{m}_{220\ldots0}+\widetilde{m}_{40\ldots0}}|\bm{s}|^2 , & \nd \geq 2.
\end{cases}
\end{equation}
By inspection of \cref{eqn:weight}, it is also easy to see that $\omega(\bm{s})$ only depends on $|\bm{s}|$.  Therefore we have 
\[
\omega(\bm{s}) = \omega(|\bm{s}|).
\]

\subsection{Quadrature weights using GMLS}
The way to construct weights using RK approximation as shown in \cref{subsec:QW_RK}
can be seen as a special case of a GMLS quadrature discussed in \cite{Trask2019}. This type of RKPM/MLS duality exists in the literature in many forms; while the classical RKPM and MLS shape functions are well-known to be equivalent under certain conditions \cite{chen2017meshfree}, more recent techniques such as the implicit gradient RKPM and GMLS approximation of derivatives are similarly identical \cite{hillman2016nodally,mirzaei2012generalized}. As discussed in \cite{Pasetto2019}, we show that a similar parallel holds for RKPM and GMLS nonlocal quadrature rules for completeness.

We state the GMLS problem as follows. Given a collection of points $ B^{\epsilon_1}_{1}(\bm{0}) $, we define the collection of quadrature weights $\bm{\omega}^T = \left\{\omega(\bm{s})\right\}_{\bm{s} \in B^{\epsilon_1}_{1}(\bm{0})}$ via the equality constrained optimization problem:

\begin{equation} \label{eqn:GMLS}
\frac{1}{2} \underset{\omega \in \mathbb{R}^{N_{\textnormal{d}}}}{argmin} \sum\limits_{\bm{s} \in B^{\epsilon_1}_{1}(\bm{0})}   \omega^2(\bm{s}) \frac{1}{W(|\bm{s}|)}\\
\end{equation}
\begin{align*}
\text{s.t.}
\sum\limits_{\bm{s} \in B^{\epsilon_1}_{1}(\bm{0})} f(\bm{s}) \omega(\bm{s}) = \int_{B_{1}(\bm{0})} f(\bm{s}) d\bm{s}, \, \forall f \in \mathbf{V},
\end{align*}
where $\mathbf{V}$ denotes a Banach space of integrands to be integrated exactly, and $W(r)$ is a radially symmetric positive weight function supported on $B_1(\bm{0})$. 
Here we select $\mathbf{V}$ as the space of functions in the form of \cref{eqn:form_of_f}. The solution to \cref{eqn:GMLS} is then given explicitly by the saddle point problem

\begin{equation} \label{eqn:gmlsKKT}
\begin{bmatrix}
\bm{W}^{-1} & (\bm{\hat{H}} \bm{\rho})^T \\
\bm{\hat{H}} \bm{\rho} & \bm{0}
\end{bmatrix}
\begin{bmatrix}
\bm{\omega} \\
\bm{\lambda}
\end{bmatrix}
 = 
 \begin{bmatrix}
 0 \\
 \widetilde{\bm{H}}_{\nd}
 \end{bmatrix}\; ,
\end{equation}
where $\bm{\lambda} \in \mathbb{R}^{\textnormal{dim}(\mathbf{V})}$ denotes a vector of Lagrange multipliers used to enforce the constraint, $\bm{W}$ denotes a $N_{\nd} \times N_{\nd} $ diagonal matrix with diagonal entries $\left\{ W(|\bm{s}|) \right \} _{\bm{s}\in B^{\epsilon_1}_{1}(\bm{0})}$, $\bm{\hat{H}}$ denotes a $\textnormal{dim}(\bm{V}) \times N_{\nd} $ rectangular matrix with column vectors $\left\{ \bm{\hat{H}}(\bm{s}) \right \} _{\bm{s}\in B^{\epsilon_1}_{1}(\bm{0})}$, and $\bm{\rho}$ denotes a $N_{\nd} \times N_{\nd} $ diagonal matrix with diagonal entries $\left\{\rho(|\bm{s}|)\right\}_{\bm{s} \in B^{\epsilon_1}_{1}(\bm{0})}$. Solution of this system yields the following expression for the quadrature weights
\begin{equation}
\begin{aligned}
\bm{\omega} &= \bm{W} (\bm{\hat{H}} \bm{\rho})^T \left( (\bm{\hat{H}} \bm{\rho})  \bm{W} (\bm{\hat{H}} \bm{\rho})^T \right)^{-1} \widetilde{\bm{H}}_{\nd}, \\
&= \bm{W}\bm{\rho} \bm{\hat{H}}^T  \left( \bm{\hat{H}} \bm{\rho} \bm{W}\bm{\rho} \bm{\hat{H}}^T \right)^{-1} \widetilde{\bm{H}}_{\nd}.
\end{aligned}
\end{equation}
If we let $W(|\bm{s}|)\rho(|\bm{s}|) = \phi(|\bm{s}|)$,  a direct comparison to \cref{eqn:BadQuadratureWeights} reveal that the two are algebraically equivalent, depending upon how the matrix inverse is handled. In \cite{Trask2019}, the authors used a pseudoinverse to handle the lack of uniqueness in the resulting solution.




There are several consequences for this equivalence. First, it reveals that the lack of invertibility of the moment matrix $\bm{\widetilde{M}}$ in \cref{eqn:DiscMomentMatrix} may be interpreted as a nonunique solution to \cref{eqn:GMLS}, meaning that there are multiple choices of quadrature weights providing the desired reproduction properties. From the construction in the previous section, we know that at least one of those solutions corresponds to positive quadrature weights. We may thus add an inequality constraint to \cref{eqn:GMLS} to enforce positivity, due to the existence of a non-empty feasible set. This is in contrast to existing literature \cite{Trask2019}, whereby no guarantees were made regarding positivity of quadrature weights. Of course, this result holds only for uniform grids, and future work may focus on whether such results hold for general quasi-uniform particle distributions in which \cite{Trask2019} is applied.

In light of this GMLS/RK equivalence, the stability analysis in subsequent sections will apply equally to these previous works, and existing error analysis in the literature related to GMLS approaches likewise may be applied to the current scheme, under appropriate assumptions. Thus, the substantial literature pursuing both RK and MLS as platforms for establishing asymptotic compatibility are effectively equivalent.

\subsection{Truncation error of the quasi-discrete nonlocal operator}
We have constructed a quasi-discrete nonlocal diffusion operator using meshfree integration and we next study the associated truncation error. 

\begin{lemma} \label{lem:DiffernceNonlocal}
Assume $u \in C^4(\R^{\nd})$, then for any $\bm{x}\in\R^\nd$
\beq
\left| \mathcal{L}^{\epsilon}_{\delta} u(\bm{x})- \mathcal{L}_{\delta} u(\bm{x}) \right| \leq C\delta^2 | u^{(4)} |_{\infty}.
\eeq
\end{lemma}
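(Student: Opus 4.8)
The plan is to compare $\mathcal{L}^{\epsilon}_{\delta}$ and $\mathcal{L}_{\delta}$ through a Taylor expansion of $u$ about $\bm{x}$, exploiting that the quadrature weights were designed so that the two operators agree on polynomials of degree at most two (compare \cref{eqn:DQuadMoment} with \cref{eqn:CQuadMoment}). First I would symmetrize both operators. Because $\rho_{\delta}$ is radial and the quadrature set $B^{\epsilon}_{\delta}(\bm{0})$ together with the weights $\omega_{\delta}$ are invariant under $\bm{s}\mapsto-\bm{s}$, averaging each expression with its image under $\bm{s}\mapsto-\bm{s}$ gives
\[
\begin{aligned}
\mathcal{L}_{\delta} u(\bm{x}) &= \tfrac12\int_{B_{\delta}(\bm{0})}\rho_{\delta}(|\bm{s}|)\big(u(\bm{x}+\bm{s})+u(\bm{x}-\bm{s})-2u(\bm{x})\big)\,d\bm{s},\\
\mathcal{L}^{\epsilon}_{\delta} u(\bm{x}) &= \sum_{\bm{s}\in B^{\epsilon}_{\delta}(\bm{0})}\omega_{\delta}(\bm{s})\rho_{\delta}(|\bm{s}|)\big(u(\bm{x}+\bm{s})+u(\bm{x}-\bm{s})-2u(\bm{x})\big),
\end{aligned}
\]
the factor $2$ in \cref{eqn:DiscreteNonlocalOperator} being absorbed into the symmetric second difference. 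Working with the second difference is convenient because its odd-order Taylor terms cancel identically.

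Next I would Taylor expand $u(\bm{x}\pm\bm{s})$ to third order with integral remainder and add the two expansions. The order-one and order-three contributions drop out, leaving $u(\bm{x}+\bm{s})+u(\bm{x}-\bm{s})-2u(\bm{x})=2\sum_{|\bm{\alpha}|=2}\tfrac{1}{\bm{\alpha}!}D^{\bm{\alpha}}u(\bm{x})\,\bm{s}^{\bm{\alpha}}+\widetilde{R}(\bm{x},\bm{s})$, where $|\widetilde{R}(\bm{x},\bm{s})|\le C|\bm{s}|^4|u^{(4)}|_{\infty}$ uniformly in $\bm{x}$. Substituting into the symmetrized forms and subtracting, the quadratic term cancels exactly, because the weights reproduce the second moments of $\rho_{\delta}$ monomial by monomial, i.e. $2\sum_{\bm{s}}\omega_{\delta}(\bm{s})\rho_{\delta}(|\bm{s}|)\bm{s}^{\bm{\alpha}}=\int_{B_{\delta}(\bm{0})}\rho_{\delta}(|\bm{s}|)\bm{s}^{\bm{\alpha}}\,d\bm{s}$ for $|\bm{\alpha}|=2$ --- this is the content of the reproducing condition \cref{eqn:rep_cond_general} used to construct $\omega_{\delta}$, and is equivalent to the agreement of $\mathcal{L}^{\epsilon}_{\delta}$ and $\mathcal{L}_{\delta}$ on quadratics expressed in \cref{eqn:DQuadMoment} and \cref{eqn:CQuadMoment}. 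Hence only the remainder survives,
\[
\mathcal{L}^{\epsilon}_{\delta} u(\bm{x})-\mathcal{L}_{\delta} u(\bm{x})=\sum_{\bm{s}\in B^{\epsilon}_{\delta}(\bm{0})}\omega_{\delta}(\bm{s})\rho_{\delta}(|\bm{s}|)\widetilde{R}(\bm{x},\bm{s})-\tfrac12\int_{B_{\delta}(\bm{0})}\rho_{\delta}(|\bm{s}|)\widetilde{R}(\bm{x},\bm{s})\,d\bm{s}.
\]

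Finally I would bound the two remainder contributions by $C|u^{(4)}|_{\infty}$ times a fourth moment of $\rho_{\delta}$. For the integral, the scaling \cref{eqn:NonlocalKernelScaling} gives $\int_{B_{\delta}(\bm{0})}\rho_{\delta}(|\bm{s}|)|\bm{s}|^4\,d\bm{s}=\delta^2\int_{B_{1}(\bm{0})}\rho(|\bm{s}|)|\bm{s}|^4\,d\bm{s}$, which is $C\delta^2$ since $\rho$ is compactly supported. For the discrete sum, the same scaling together with $\omega_{\delta}(\bm{s})=\delta^{\nd}\omega(\bm{s}/\delta)$ yields $\sum_{\bm{s}\in B^{\epsilon}_{\delta}(\bm{0})}|\omega_{\delta}(\bm{s})|\rho_{\delta}(|\bm{s}|)|\bm{s}|^4=\delta^2\sum_{\bm{t}\in B^{\epsilon_1}_{1}(\bm{0})}|\omega(\bm{t})|\rho(|\bm{t}|)|\bm{t}|^4$, and the right-hand sum is a fixed finite constant because $B^{\epsilon_1}_{1}(\bm{0})$ contains the fixed number $N_{\nd}$ of points, independent of $\delta$. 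Adding the two estimates gives $|\mathcal{L}^{\epsilon}_{\delta} u(\bm{x})-\mathcal{L}_{\delta} u(\bm{x})|\le C\delta^2|u^{(4)}|_{\infty}$ with $C$ independent of $\bm{x}$ and $\delta$. I do not expect a genuine obstacle here: all the difficulty was front-loaded into the design of the weights, so the estimate reduces to controlling a fourth-order Taylor remainder. The one point requiring care --- rather than difficulty --- is the bookkeeping of the factor $2$ and of the symmetrization, so that the design property \cref{eqn:DQuadMoment} lines up with the continuum moments \cref{eqn:CQuadMoment} at exactly second order; once this is in place the two leading terms cancel and the scaling of the fourth moment delivers the $\delta^2$ rate.
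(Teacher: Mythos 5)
Your proposal is correct and follows essentially the same route as the paper's proof: Taylor-expand the symmetric second difference $u(\bm{x}+\bm{s})+u(\bm{x}-\bm{s})-2u(\bm{x})$, cancel the quadratic term using the designed moment/reproducing conditions, and bound the surviving fourth-order remainder via the $\delta^2$ scaling of the fourth moment of $\rho_\delta$ (and the finiteness of the quadrature set). The only difference is cosmetic: you make the symmetrization and the monomial-by-monomial moment matching explicit, whereas the paper performs these steps implicitly.
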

\begin{proof}   
Using Taylor's theorem, we have 
\begin{equation} \label{eqn:Remainder}
u(\bm{x}+\bm{s}) + u(\bm{x}-\bm{s}) - 2u(\bm{x})  = 2\sum_{|\bm{\alpha}|=2}\bm{s}^{\bm{\alpha}} \frac{D^{\bm{\alpha}}u(\bm{x})}{\bm{\alpha}!} + \sum_{|\bm{\beta|=4}} \bm{s}^{\bm{\beta}} \frac{R_{\bm{\beta}}(\bm{y})}{\bm{\beta}!}\,,
\end{equation}
where $|R_{\bm{\beta}}(\bm{y})|\leq C |u^{(4)}|_\infty$ and $\bm{y}=\bm{y}(\bm{x},\bm{s})$ that depends on both $\bm{x}$ and $\bm{s}$.
Therefore, for any point $\bm{x} \in  \R^\nd$, 
\begin{equation} \label{eqn:Ldelep-Ldel}
\begin{aligned}
\left| \mathcal{L}^{\epsilon}_{\delta} u(\bm{x}) - \mathcal{L}_{\delta} u(\bm{x}) \right|  &=\Bigg\lvert\sum_{|\bm{\alpha}|=2}\frac{D^{\bm{\alpha}}u(\bm{x})}{\bm{\alpha}!} \left( \sum_{\bm{s} \in B^{\epsilon}_{\delta}(\bm{0})} \omega_{\delta}(\bm{s}) \rho_{\delta}(|\bm{s}|)\bm{s}^{\bm{\alpha}}  - \int_{ B_{\delta}(\bm{0})} \rho_{\delta}(|\bm{s}|) \bm{s}^{\bm{\alpha}}  d\bm{s}\right)  \\
  + \sum_{|\bm{\beta}|=4}&\frac{1}{2\bm{\beta}!} \left( \sum_{\bm{s} \in B^{\epsilon}_{\delta}(\bm{0})} \omega_{\delta}(\bm{s}) \rho_{\delta}(|\bm{s}|)\bm{s}^{\bm{\beta}}R_{\bm{\beta}}(\bm{y})- \int_{ B_{\delta}(\bm{0})} \rho_{\delta}(|\bm{s}|) \bm{s}^{\bm{\beta}}  R_{\bm{\beta}}(\bm{y})d\bm{s}\right) \Bigg\rvert \\
  \leq 0 + C&| u^{(4)} |_{\infty}  \sum_{|\bm{\beta}|=4} \frac{1}{2\bm{\beta}!}\left( \sum_{\bm{s} \in B^{\epsilon}_{\delta}(\bm{0})} \omega_{\delta}(\bm{s}) \rho_{\delta}(|\bm{s}|)|\bm{s}|^{4} +  \int_{ B_{\delta}(\bm{0})} \rho_{\delta}(|\bm{s}|) |\bm{s}|^{4} d\bm{s}\right)  \\
  \leq C& \delta^2 | u^{(4)} |_{\infty}.
\end{aligned}
\end{equation}
where we have used \cref{eqn:DQuadMoment}. 
\end{proof}

\Cref{lem:DiffernceNonlocal} suggested that the upper bound of the truncation error between $\mathcal{L}_\delta^\epsilon u$ and $\mathcal{L}_\delta u$ is fixed if 
the number of quadrature points inside the $\delta$-neighborhood of each nodal point does not change, and that the truncation error goes to zero in second order as $\delta$ goes to zero. In the following remark, we also provide, formally, another observation of the truncation error corresponds to $\epsilon$. 

\begin{remark}
With additional regularity assumptions on $u$ and $\rho_\del$, it is possible to show that 
\beq \label{eqn:RemarkOnLdelep}
\left| \mathcal{L}^{\epsilon}_{\delta} u(\bm{x})- \mathcal{L}_{\delta} u(\bm{x}) \right| \leq C\ep^2\,,
\eeq
where $\ep$ is viewed as the spacing the qudrature points as depicted in \cref{fig:DiscreteBall}. Indeed, if we assume $F_\del(\bm{s}):= \rho_{\delta}(|\bm{s}|)\bm{s}^{\bm{\beta}}R_{\bm{\beta}}(\bm{y}(\bm{x},\bm{s}))\in C^2(\overline{B_\del(\bm{0})})$ for $|\bm{\beta}|=4$ (this can be achieved by assuming {\it e.g.}, $u\in C^6(\mathbb{R}^\nd)$ and $\rho_{\delta}(|\bm{s}|)\bm{s}^{\bm{\beta}}\in C^2(\overline{B_\del(\bm{0})})$), then $\sum_{\bm{s} \in B^{\epsilon}_{\delta}(\bm{0})} \omega_{\delta}(\bm{s})F_\del(\bm{s}) $ is an $O(\ep^2)$ approximation to $\int_{B_\del(\bm{0})} F_\del(\bm{s}) d\bm{s}$, which can be used to estimate the second line in \cref{eqn:Ldelep-Ldel}.
This is to say that for a fixed horizon $\delta$, the  quasi-discrete operator $\cL_\del^\ep$ converges to   the nonlocal diffusion operator $\mathcal{L}_{\delta}$ only if $\epsilon\to0$, where the number of quadrature points used inside the $\del$-neighborhood of each nodal point should approach infinity for it to happen. 
\end{remark}

\cref{lem:DiffernceNonlocal} shows that if the quasi-discrete 
operator $\mathcal{L}^{\epsilon}_{\delta}$ satisfy the polynomial reproducing conditions \cref{eqn:DQuadMoment} up to second order, then $\mathcal{L}^{\epsilon}_{\delta}$
is a second-order approximation of $\mathcal{L}_{\delta}$ in $\delta$.
For high order approximations, one could follow the same procedure 
to design weights such that the quasi-discrete operator satisfy high order polynomial reproducing conditions. However,  the positivity of weights for high order approximations  needs 
further investigation which is beyond the scope of this paper. 
We will see in the next section that the positivity of weights is crucial to 
guarantee the stability of numerical schemes applied to the quasi-discrete operator.

\section{Convergence analysis of the RK collocation on the quasi-discrete nonlocal diffusion} \label{sec:DCAnalysis}

In this section, we apply the RK collocation method introduced in \cref{sec:RKCollocation} to the quasi-discrete nonlocal diffusion operator defined in \cref{sec:QuasiDiscrete}. 
The RK collocation scheme for the quasi-discrete operator is formulated as follows. 
Find a function $u \in S \left(\square \cap {\Omega}  \right) $ such that
\begin{equation} \label{eqn:DCollocationScheme_1}
-\mathcal{L}^\epsilon_\delta u(\bm{x_k}) =  {f_\del}(\bm{x_k}), \, \quad \bm{x}_{\bm{k}} \in (\square \cap \Omega)\,.
\end{equation}
Equivalently, \cref{eqn:DCollocationScheme_1} is also written as
\begin{equation}  \label{eqn:DCollocationScheme}
-r^h_{\Omega} \mathcal{L}^{\epsilon}_{\delta} u = r^h_{\Omega} {f_\del}, \quad  u \in S(\square \cap {\Omega}). 
\end{equation}
For practical reasons, in this section we assume $\delta = M_0 h_{\max}$, where $M_0>0$ is fixed and investigate the convergence behaviour of the numerical solution of the collocation scheme, obtaining results similar to \cref{sec:CAnalysis}.

\subsection{Stability of RK collocation on the quasi-discrete nonlocal diffusion}
In this subsection, we aim to show the the stability of the RK collocation scheme \cref{eqn:DCollocationScheme} given as follows.

\begin{theorem} \label{thm:DStability}
\textbf{(Stability II)} For any $\delta \in(0, \delta_0]$ and $u \in S(\square \cap {\Omega})$, we have
\begin{equation} \notag
\left|r^h_{\Omega}(-\mathcal{L}^{\epsilon}_{\delta} u)\right|_{h} \geq C \| u\| _{L^2(\mathbb{R}^{\nd})}\,,
\end{equation}
where $C$ is a constant that only depends on $\Omega$, $\delta_0$, and $M_0$.
\end{theorem}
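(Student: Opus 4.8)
The plan is to reproduce the proof of \cref{thm:stability} line by line, with $\mathcal{L}_{\delta}$ replaced throughout by $\mathcal{L}^{\epsilon}_{\delta}$. Its closing computation (Cauchy--Schwarz together with \cref{lem:L2l2}, the passage from the collocation form to the Galerkin form, and a coercivity bound for the Galerkin form) survives intact provided I supply two ingredients: (a) the analogue of \cref{lem:GCF}, furnishing Fourier symbols $\lambda^{\epsilon}_{G},\lambda^{\epsilon}_{C}$ for the two quasi-discrete quadratic forms with $\lambda^{\epsilon}_{C}\ge C\lambda^{\epsilon}_{G}$; and (b) a uniform lower bound $\lambda^{\epsilon}_{G}(\delta,\bm{h},\bm{\xi})\ge C\,\lambda_{G}(\delta,\bm{h},\bm{\xi})$ on $\bm{Q}$. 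Ingredient (b) combines with the coercivity of the continuous Galerkin form, already available from the proof of \cref{thm:stability} via \cref{thm:nonlocalmapping}, to give $(i^{h}(u_{\bm{k}}),-\mathcal{L}^{\epsilon}_{\delta}i^{h}(u_{\bm{k}}))\ge C\|u\|^{2}_{L^{2}(\R^{\nd})}$, after which the assertion follows exactly as before.

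Step (a) is routine. Computing the symbol as in \cref{eqn:FT_diffusion} and pairing $\bm{s}$ with $-\bm{s}$ (legitimate by the symmetry of $B^{\epsilon}_{\delta}(\bm{0})$ and the radial dependence of $\omega_{\delta},\rho_{\delta}$) yields
\[
\lambda^{\epsilon}_{\delta}(\bm{\xi})=2\sum_{\bm{s}\in B^{\epsilon}_{\delta}(\bm{0})}\omega_{\delta}(\bm{s})\rho_{\delta}(|\bm{s}|)\bigl(1-\cos(\bm{s}\cdot\bm{\xi})\bigr)\ \ge\ 0,
\]
where non-negativity uses the positivity of the weights established in \cref{subsec:QW_RK}. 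Substituting $\lambda^{\epsilon}_{\delta}$ for $\lambda_{\delta}$ in \cref{eqn:lambdaG,eqn:lambdaC}, the proof of \cref{lem:GCF} carries over verbatim, and $\lambda^{\epsilon}_{C}\ge C\lambda^{\epsilon}_{G}$ follows from $\lambda^{\epsilon}_{\delta}\ge0$ and $0\le|\sin x/x|\le1$.

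Step (b) is the heart of the argument and is where the hypotheses $\delta=M_{0}h_{\max}$ and $N_{\nd}\ge 4\nd$ are consumed. By the kernel scaling \cref{eqn:NonlocalKernelScaling} one has $\lambda^{\epsilon}_{\delta}(\bm{\eta})=\delta^{-2}\lambda^{\epsilon}_{1}(\delta\bm{\eta})$ and $\lambda_{\delta}(\bm{\eta})=\delta^{-2}\lambda_{1}(\delta\bm{\eta})$, so in both $\lambda^{\epsilon}_{G}$ and $\lambda_{G}$ the operator is evaluated at the rescaled frequencies $\bm{\zeta}_{\bm{r}}:=\delta(\bm{\xi}+2\pi\bm{r})\oslash\bm{h}=M_{0}(\bm{\xi}+2\pi\bm{r})\oslash\bm{\hat{h}}$, which are \emph{independent of} $h_{\max}$. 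Since both series carry the same nonnegative weights $\prod_{j}h_{j}(\sin(\xi_{j}/2)/(\xi_{j}+2\pi r_{j}))^{8}$, it suffices to compare $\lambda^{\epsilon}_{1}(\bm{\zeta}_{\bm{r}})$ with $\lambda_{1}(\bm{\zeta}_{\bm{r}})$ term by term. The rapid decay of those weights in $\bm{r}$ makes the terms with $|\bm{r}|\le R_{0}$ dominant up to an arbitrarily small relative tail (controlled using $\lambda_{1}\le C$ and $\lambda^{\epsilon}_{\delta}\ge0$), confining the comparison to a bounded window $\{|\bm{\zeta}|\le Z_{0}\}$ with $Z_{0}$ depending only on $M_{0},\bm{\hat{h}},R_{0}$. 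On this window I would prove $\lambda^{\epsilon}_{1}(\bm{\zeta})\ge C\lambda_{1}(\bm{\zeta})$: near $\bm{\zeta}=\bm{0}$ the two symbols share the identical quadratic leading part because the weights reproduce second moments (\cref{eqn:DQuadMoment}), so the ratio stays bounded below, while away from the origin both are continuous and $\lambda_{1}>0$, so the bound follows by compactness once $\lambda^{\epsilon}_{1}$ is shown to be zero-free on the punctured window.

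The main obstacle is precisely this zero-freeness. Being a finite cosine sum, $\lambda^{\epsilon}_{1}$ vanishes at any $\bm{\zeta}\ne\bm{0}$ for which $\bm{s}\cdot\bm{\zeta}\in2\pi\Z$ simultaneously over all quadrature points, and such spurious zeros would destroy the comparison; this is why the bare symmetry count $N_{\nd}\ge2\nd$ must be strengthened to $N_{\nd}\ge 4\nd$. With (at least) two symmetric shells of points along each coordinate axis, the axis constraints alone force $\bm{\zeta}$ onto a coarse lattice whose first nonzero node can be pushed outside $\{|\bm{\zeta}|\le Z_{0}\}$, so that $\lambda^{\epsilon}_{1}$ has no zero other than the origin on the window and the windowed comparison $\lambda^{\epsilon}_{1}\ge C\lambda_{1}$ holds uniformly in $\bm{\xi}\in\bm{Q}$. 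Verifying that the shell radii admit no resonance below $Z_{0}$ is the delicate computational step and the genuine reason for the count $4\nd$. Granting (a) and (b), the conclusion is reached by the identical chain of inequalities that ends the proof of \cref{thm:stability}.
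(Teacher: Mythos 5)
Your proposal is correct in substance, and it reaches the theorem by a route that differs from the paper's in its middle step. The paper never introduces a quasi-discrete Galerkin symbol $\lambda^{\epsilon}_{G}$: its \cref{lem:FDC} proves $\lambda_{C}\leq C\lambda^{\epsilon}_{C}$ directly, i.e.\ it compares the two \emph{collocation} symbols, and then reuses \cref{lem:GCF} \ref{FGCEquivalent} together with the Galerkin coercivity exactly as in \cref{thm:stability}. That comparison is made by sandwiching rather than term by term: only the $\bm{r}=\bm{0}$ term of $\lambda^{\epsilon}_{C}$ is retained to get $\lambda^{\epsilon}_{C}\geq C(|\bm{\xi}|/\delta)^{2}\prod_{j}h_{j}$, while the full series for $\lambda_{C}$ is bounded above by $C(|\bm{\xi}|/\delta)^{2}\prod_{j}h_{j}$ using $1-\cos x\leq x^{2}$. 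The advantage is that the quasi-discrete symbol $\lambda^{\epsilon_1}_{1}$ only ever needs a lower bound on the fixed set $\{M_{0}\bm{\xi}\oslash\bm{\hat{h}}:\bm{\xi}\in\bm{Q}\}$, whereas your term-by-term comparison of $\lambda^{\epsilon}_{G}$ with $\lambda_{G}$ forces you to control it on the larger window $|\bm{\zeta}|\leq Z_{0}$, with $Z_{0}$ growing with the truncation radius $R_{0}$. Both windows are bounded, and both versions come down to the crux you correctly isolate: strict positivity of the finite cosine sum away from the origin, secured by the positive weights, the symmetry assumption, and the $N_{\nd}\geq 4\nd$ count (the paper resolves the resonance by inserting a second shell whose radius ratio to the first is irrational, so the spurious zero set degenerates to $\{\bm{0}\}$; your variant of pushing the first lattice node outside the window is equivalent in effect). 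One correction is needed: the tail over $|\bm{r}|>R_{0}$ cannot be controlled "using $\lambda_{1}\leq C$", because the paper's assumptions (bounded second moment only) permit non-integrable $\rho$, for which $\lambda_{1}$ is unbounded at large frequency. The bound you actually need, and the one the paper uses in its upper estimate of $\lambda_{C}$, is $\lambda_{1}(\bm{\zeta})\leq C|\bm{\zeta}|^{2}$; this suffices because the eighth-power weights decay fast enough in $\bm{r}$ and $\lambda_{G}\geq c\,|\bm{\xi}|^{2}\prod_{j}h_{j}$ on $\bm{Q}$ from its $\bm{r}=\bm{0}$ term, so the relative tail can still be made small. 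With that repair your argument closes.
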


The key for showing the theorem is to establish an analogue of \Cref{lem:GCF}. Similarly as \cref{eqn:FT_diffusion}, we can find Fourier symbol of the  operator $\mathcal{L}^{\epsilon}_{\delta}$,
\begin{equation} \notag
\begin{aligned}
-\widehat{\mathcal{L}^{\epsilon}_{\delta} u}(\bm{\xi}) 
  &= \lambda^{\epsilon}_{\delta}(\bm{\xi}) \widehat{u}(\bm{\xi}),
\end{aligned}
\end{equation}
where $\lambda^{\epsilon}_{\delta}(\bm{\xi})$ is given by
\begin{equation} \label{eqn:DNonlocalEigen} 
\lambda^{\epsilon}_{\delta}(\bm{\xi}) =\sum\limits_{\bm{s} \in B^{\epsilon}_{\delta}(\bm{0})} \omega_{\delta}(\bm{s}) \rho_{\delta}(\bm{s}) (1-\textnormal{cos}(\bm{s}\cdot\bm{\xi})).
\end{equation}
Since $\omega_{\delta}({\bm{s}})$ is symmetric and  non-negative, $\lambda^{\epsilon}_{\delta}(\bm{\xi}) $ is real and non-negative. We then obtain the Fourier representation of the collocation scheme as follows.

\begin{lemma}  \label{lem:FDC}
Let $\widetilde{u}(\bm{\xi})$ and $\widetilde{v}(\bm{\xi})$ be the Fourier series of the sequences $(u_{\bm{k}}), (v_{\bm{k}}) \in l^2(\mathbb{Z}^d)$ respectively. Then
\[
((u_{\bm{k}}), -r^h\mathcal{L}^{\epsilon}_{\delta}i^h(v_{\bm{k}}))_{l^2} = (2\pi)^{-{\nd}} \int_{\bm{Q}} \widetilde{u}(\bm{\xi}) \overline{\widetilde{v}(\bm{\xi})} \lambda^{\epsilon}_C (\delta, \bm{h}, \bm{\xi}) d\bm{\xi},
\] \label{FDCollocation}
where $\lambda^{\epsilon}_C$ is defined as
\begin{equation} \label{eqn:lambdaDC}
\lambda^{\epsilon}_C(\delta, \bm{h}, \bm{\xi}) = 2^{4{\nd}} \sum_{\bm{r} \in \mathbb{Z}^{\nd}} \lambda^{\epsilon}_{\delta}\left((\bm{\xi} + 2 \pi \bm{r}) \oslash{\bm{h}} \right) \prod_{j=1}^{\nd} h_j \left(\frac{\textnormal{sin}(\xi_j /2)}{\xi_j + 2 \pi r_j} \right)^{4}. \\
\end{equation}
Moreover,
\begin{equation} \label{eqn:CC}
\lambda_{C} (\delta, \bm{h}, \bm{\xi}) \leq C \lambda^{\epsilon}_{C} (\delta, \bm{h}, \bm{\xi}),
\end{equation}
for some generic constant $C >0$.
\end{lemma}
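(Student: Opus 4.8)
The plan is to establish the Fourier identity exactly as in part \ref{FCollocation} of \cref{lem:GCF}, and then to reduce the comparison \cref{eqn:CC} to an $\bm{h}$-independent question on the compact cell $\bm{Q}$ by exploiting the coupling $\delta = M_0 h_{\max}$. For the identity I would repeat verbatim the computation that produced \cref{eqn:lambdaC}, replacing the Fourier symbol $\lambda_\delta$ of $\mathcal{L}_\delta$ by the symbol $\lambda^\epsilon_\delta$ of $\mathcal{L}^\epsilon_\delta$ in \cref{eqn:DNonlocalEigen}. Writing $-\mathcal{L}^\epsilon_\delta\Psi_{\bm{k'}}(\bm{x_k}) = (2\pi)^{-\nd}\int_{\R^\nd} e^{i(\bm{x_k}-\bm{x_{k'}})\cdot\bm{\xi}}\lambda^\epsilon_\delta(\bm{\xi})\widehat{\Psi_{\bm{0}}}(\bm{\xi})\,d\bm{\xi}$, folding $\R^\nd$ onto $\bm{Q}$ via $\bm{\xi}\mapsto(\bm{\xi}+2\pi\bm{r})\oslash\bm{h}$ and inserting the cubic B-spline transform of $\widehat{\Psi_{\bm{0}}}$, one obtains the kernel $\lambda^\epsilon_C$ of \cref{eqn:lambdaDC}; assembling the double Fourier series then gives the stated identity. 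Since $\omega_\delta$ is symmetric and nonnegative, $\lambda^\epsilon_\delta$ and hence $\lambda^\epsilon_C$ are real and nonnegative, as already noted.

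The comparison \cref{eqn:CC} is the heart of the lemma. The key observation is that with $\delta = M_0 h_{\max}$ and $\bm{h} = h_{\max}\hat{\bm{h}}$ the vector $\delta\oslash\bm{h} = M_0\oslash\hat{\bm{h}}$ is independent of $h_{\max}$. Using the kernel scaling \cref{eqn:NonlocalKernelScaling} and $\omega_\delta(\bm{s})=\delta^\nd\omega(\bm{s}/\delta)$, the substitution $\bm{s}=\delta\bm{t}$ gives $\lambda_\delta(\bm{\zeta}) = \delta^{-2}\Lambda(\delta\bm{\zeta})$ and $\lambda^\epsilon_\delta(\bm{\zeta}) = \delta^{-2}\Lambda^{\epsilon_1}(\delta\bm{\zeta})$, where $\Lambda(\bm{\eta}) = \int_{B_1(\bm{0})}\rho(|\bm{t}|)(1-\cos(\bm{\eta}\cdot\bm{t}))\,d\bm{t}$ and $\Lambda^{\epsilon_1}(\bm{\eta}) = \sum_{\bm{t}\in B^{\epsilon_1}_1(\bm{0})}\omega(\bm{t})\rho(|\bm{t}|)(1-\cos(\bm{\eta}\cdot\bm{t}))$ are fixed functions, independent of $\delta$ and $h_{\max}$. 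Substituting into \cref{eqn:lambdaC,eqn:lambdaDC}, the common prefactor $2^{4\nd}\delta^{-2}\prod_j h_j$ and the envelope $\prod_j(\sin(\xi_j/2)/(\xi_j+2\pi r_j))^4$ are shared by both series, and the argument $\bm{\eta}_{\bm{r}} := (\bm{\xi}+2\pi\bm{r})\odot(M_0\oslash\hat{\bm{h}})$ at which $\Lambda$ and $\Lambda^{\epsilon_1}$ are sampled depends only on $\bm{\xi}$. Hence the ratio $\lambda_C/\lambda^\epsilon_C$ equals a ratio of two fixed, $h_{\max}$-independent functions of $\bm{\xi}\in\bm{Q}$, and \cref{eqn:CC} reduces to bounding this ratio uniformly on $\overline{\bm{Q}}$, with the resulting constant depending on $M_0$ (through $\hat{\bm{h}}$ and $\epsilon_1$) but not on $\delta$.

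What remains --- and is the genuine obstacle --- is the uniform bound on $\overline{\bm{Q}}$. Two regimes must be handled. Near $\bm{\xi}=\bm{0}$ both numerator and denominator vanish to second order: only the $\bm{r}=\bm{0}$ term survives as the envelope kills all others, and the second-order reproduction \cref{eqn:DQuadMoment} gives $\Lambda^{\epsilon_1}(\bm{\eta}) = \tfrac12\sum_{i,j}\eta_i\eta_j\sum_{\bm{t}}\omega(\bm{t})\rho(|\bm{t}|)t_i t_j + O(|\bm{\eta}|^4)$ with positive-definite Hessian (by positivity and symmetry of the weights), so $\Lambda^{\epsilon_1}(\bm{\eta})\sim c|\bm{\eta}|^2$, matching the quadratic behavior of $\Lambda$ guaranteed by the bounded second moment \cref{eqn:boundedmoment}; the ratio therefore extends continuously across the removable singularity at $\bm{\xi}=\bm{0}$. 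Away from the origin one must rule out spurious zeros of the periodized quasi-discrete symbol: unlike the integral $\Lambda$, the finite sum $\Lambda^{\epsilon_1}$ vanishes on a lattice of nonzero frequencies, so I would have to show that for each fixed $\bm{\xi}\neq\bm{0}$ the full sum over $\bm{r}$ defining $\lambda^\epsilon_C(\bm{\xi})$ cannot vanish in every term in which $\lambda_C(\bm{\xi})$ is positive. This is exactly where the structural hypotheses enter --- positivity and symmetry of the weights (so that $\Lambda^{\epsilon_1}\geq 0$ with no cancellation) together with the richness requirement $N_\nd\geq 4\nd$ on the quadrature stencil --- and it is the step I expect to demand the most care. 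With $\lambda^\epsilon_C$ shown strictly positive on $\overline{\bm{Q}}\setminus\{\bm{0}\}$, a continuity-and-compactness argument on $\overline{\bm{Q}}$ then yields a finite $C$, depending on $M_0$ as anticipated in \cref{thm:DStability}.
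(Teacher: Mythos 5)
Your proposal is correct and takes essentially the same route as the paper: the identity is obtained by rerunning the computation of \cref{lem:GCF} with $\lambda^{\epsilon}_{\delta}$ in place of $\lambda_{\delta}$, and the comparison \cref{eqn:CC} rests on the same rescaling to $h_{\max}$-independent symbols, the same split of $\bm{Q}$ into a neighborhood of the origin and its complement, and the same structural hypotheses (positivity and symmetry of the weights, $N_{\nd}\geq 4\nd$). The one step you defer --- ruling out zeros of the periodized quasi-discrete symbol away from the origin --- is precisely what the paper resolves by observing that such a zero would force every non-orthogonal quadrature direction onto a multiple of $2\pi$, which is precluded by including two collinear quadrature points with irrational length ratio, after which compactness yields the uniform bound; the paper then states the conclusion as the two-sided comparison $\lambda^{\epsilon}_{C}\geq C\left(|\bm{\xi}|/\delta\right)^2\prod_{j=1}^{\nd} h_j \geq C\lambda_{C}$ rather than bounding the ratio directly.
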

\begin{proof} The derivation of \cref{eqn:lambdaDC} is similar to \cref{eqn:lambdaC}, following the replacement of $\lambda_{\delta}(\bm{\xi}+ 2 \pi \bm{r})$ by $\lambda^{\epsilon}_{\delta}(\bm{\xi}+ 2 \pi \bm{r})$. We proceed to show \cref{eqn:CC}. By change of variables, we obtain
\begin{equation} \notag  
\lambda_{\delta} \left((\bm{\xi}+2\pi\bm{r})\oslash{\bm{h}} \right) =  \frac{1}{\delta^2}\lambda_1\left(\delta (\bm{\xi}+2\pi\bm{r})\oslash{\bm{h}} \right)
\end{equation}
and 
\begin{equation} \notag  
\lambda^{\epsilon}_{\delta} \left((\bm{\xi}+2\pi\bm{r})\oslash{\bm{h}} \right) =  \frac{1}{\delta^2}\lambda^{\epsilon_1}_1\left(\delta (\bm{\xi}+2\pi\bm{r})\oslash{\bm{h}} \right),
\end{equation}
where 
\begin{equation} \notag  
\lambda_{1} \left(\delta(\bm{\xi}+2\pi\bm{r})\oslash{\bm{h}} \right) = \mathlarger{\int}_{B_{1}(\bm{0})} \rho (|\bm{s}|) \left(1-\textnormal{cos}\left( \delta \bm{s}  \cdot \left((\bm{\xi}+2\pi\bm{r}) \oslash \bm{h} \right) \right) \right) d\bm{s} 
\end{equation}
and 
\begin{equation} \notag 
\lambda^{\epsilon_1}_{1}(\delta(\bm{\xi}+2\pi\bm{r}) \oslash{\bm{h}}) = \sum\limits_{\bm{s} \in B^{\epsilon_1}_{1}(\bm{0})} \omega(|\bm{s}|) \rho(|\bm{s}|) \left(1-\textnormal{cos}\left( \delta \bm{s}  \cdot \left((\bm{\xi}+2\pi\bm{r}) \oslash \bm{h} \right) \right) \right).
\end{equation}
Let us decompose the set $\bm{Q}=(-\pi,\pi)^\nd$ into $\bm{Q}_1$ and  $\bm{Q}_2$,
\begin{equation} \notag  
\bm{Q}_1 : = \{ \bm{\xi} \in \bm{Q} : \frac{ \delta |\bm{\xi}|}{h_{\min}} \leq \pi \}, \text{ and } \bm{Q}_2= \overline{\bm{Q} \backslash \bm{Q}_1}. 
\end{equation}
Notice that for  $\bm{\xi} \in \bm{Q}_1$ and $\bm{s}\in B_1(\bm{0})$,
\[
 \big|  \delta \bm{s}  \cdot \left(\bm{\xi}  \oslash \bm{h} \right)  \big| \leq  \frac{\delta |\bm{\xi}|}{h_{\textnormal{min}}}  \leq \pi\,.
\]

First, by \cref{eqn:lambdaDC}, we observe that 
\[
\lambda^{\epsilon}_{C} (\delta, \bm{h}, \bm{\xi}) \geq C  \frac{1}{\delta^2}\lambda^{\epsilon_1}_1\left(\delta\bm{\xi}\oslash{\bm{h}} \right) \prod_{j=1}^{\nd} h_j \left(\frac{\textnormal{sin}(\xi_j /2)}{\xi_j } \right)^{4} \,.
\]
Notice that there exists $C>0 $ such that for $x \in (-\pi, \pi)$
\[
1-\textnormal{cos}(x) \geq C x^2\,.
\]
Then, we have for $\bm{\xi} \in \bm{Q}_1$,
\begin{equation} \label{eqn:DCsymbol}
\begin{aligned}
\lambda^{\epsilon_1}_{1} \left(\delta\bm{\xi} \oslash \bm{h} \right) & = \sum\limits_{\bm{s} \in B^{\epsilon_1}_{1}(\bm{0})} \omega(|\bm{s}|) \rho(|\bm{s}|) \left(1-\textnormal{cos}\left( \delta \bm{s} \cdot \left(\bm{\xi} \oslash \bm{h} \right) \right) \right),\\  
& \geq C \del^2\sum\limits_{\bm{s} \in B^{\epsilon_1}_{1}(\bm{0})} \omega(|\bm{s}|) \rho(|\bm{s}|) | \bm{s} \cdot \bm{\xi} \oslash \bm{h}|^2  \\
& \geq C \left(\frac{\delta |\bm{\xi}|}{h_{\textnormal{max}}} \right)^2\sum\limits_{\bm{s} \in B^{\epsilon_1}_{1}(\bm{0})} \omega(|\bm{s}|) \rho(|\bm{s}|) |\bm{s}|^2  \textnormal{cos}^2(\theta(\bm{s}, \bm{\xi} \oslash \bm{\hat{h}})) ,  \\
& \geq C |\bm{\xi}|^2,
\end{aligned}
\end{equation}
where $C$ depends only on $M_0=\delta/h_{\textnormal{max}}$ and the set $B_1^{\epsilon_1}(\bm{0})$ and $\theta=\theta(\bm{s},\bm{\xi} \oslash \bm{\hat{h}})$ is the angle between $\bm{s} $ and $\bm{\xi}\oslash {\bm{\hat{h}}}$. The last line of of \cref{eqn:DCsymbol} comes from the following observation. For a fixed vector $\bm{\xi} \oslash {\bm{\hat{h}}} $, $\textnormal{cos}(\theta (\bm{s}, \bm{\xi} \oslash \bm{\hat{h}})) = 0 $ only for the points $\bm{s}$ that lies in directions orthogonal to $\bm{\xi}\oslash {\bm{\hat{h}}}$. But from the symmetry assumption of the discrete set $B^{\epsilon_1}_1(\bm{0})$ in \cref{subsec:QW_RK}, there are always $\bm{s}\in B^{\epsilon_1}_1(\bm{0}) $ such that $\bm{s}$ 
is not orthogonal to $\bm{\xi} \oslash {\bm{\hat{h}}} $. 
Therefore, for any nonzero $\bm{\xi} \in \bm{Q}_1$, the summation in the second line of 
 \cref{eqn:DCsymbol} is always a positive number. 
 Then it has a positive lower bound since  $\bm{Q}_1$ is a compact set. 
 Now for $\bm{\xi}\in \bm{Q}_2$, we have $|\bm{\xi} \oslash \bm{\hat{h}}| \geq |\bm{\xi}| \geq \pi h_{\min}/(h_{\max} M_0) $, then
\begin{equation}
\begin{aligned}
\lambda^{\epsilon_1}_{1} \left(\delta\bm{\xi} \oslash \bm{h} \right) & = \sum\limits_{\bm{s} \in B^{\epsilon_1}_{1}(\bm{0})} \omega(|\bm{s}|) \rho(|\bm{s}|) \left(1-\textnormal{cos}\left( \frac{\delta}{h_{\max}} \bm{s}  \cdot \left(\bm{\xi} \oslash \bm{\hat{h}} \right) \right) \right),\\ 
& =  \sum\limits_{\bm{s} \in B^{\epsilon_1}_{1}(\bm{0})} \omega(|\bm{s}|) \rho(|\bm{s}|) \left(1-\textnormal{cos}\left( M_0 |\bm{s}| | \bm{\xi} \oslash \bm{\hat{h}} | \cos({\theta(\bm{s},\bm{\xi} \oslash \bm{\hat{h}} ))} \right) \right) \,.\\
\end{aligned}
\end{equation}
As we have seen, for any fixed $\bm{\xi} \oslash \bm{\hat{h}} $,  there always exists 
$\bm{s}\in B^{\epsilon_1}_{1}(\bm{0}) $
such that $\cos({\theta(\bm{s},\bm{\xi} \oslash \bm{\hat{h}} ))} \neq 0$. 
However,  $\lambda^{\epsilon_1}_{1} \left(\delta\bm{\xi} \oslash \bm{h} \right)$ 
may still be zero if $\bm{s}\in B^{\epsilon_1}_{1}(\bm{0})$ and 
$M_0 |\bm{s}| | \bm{\xi} \oslash \bm{\hat{h}} | \cos({\theta(\bm{s},\bm{\xi} \oslash \bm{\hat{h}} ))} =2k\pi $ for $k\in \mathbb{Z}$. If this happens, one can add another point $\tilde{\bm{s}} \in B^{\epsilon_1}_{1}(\bm{0})$ in the same direction of $\bm{s}$ such that 
$|\bm{s}|/|\tilde{\bm{s}}|$ is an irrational number and thus
$M_0 |\tilde{\bm{s}}| | \bm{\xi} \oslash \bm{\hat{h}} | \cos({\theta(\tilde{\bm{s}},\bm{\xi} \oslash \bm{\hat{h}} ))} \neq 2k\pi $ for any $k\in \mathbb{Z}$.
Therefore, for a proper choice of $B^{\epsilon_1}_{1}(\bm{0}) $ with $N_\nd \geq 4\nd$,
we can always have $\lambda^{\epsilon_1}_{1} \left(\delta\bm{\xi} \oslash \bm{h} \right)> 0$ 
for $\bm{\xi}\in \bm{Q}_2$. Then since $\bm{Q}_2$ is a compact set, we have $\lambda^{\epsilon_1}_{1} \left(\delta\bm{\xi} \oslash \bm{h} \right) \geq C \geq |\bm{\xi}|^2$ for $\bm{\xi}\in \bm{Q}_2$.
 Now observe that
for any nonzero $\bm{\xi}=(\xi_1,\ldots,\xi_\nd) \in \bm{Q}$, we have
\[
C_1 < \left(\frac{\textnormal{sin}({\xi}_j/2)}{{\xi}_j} \right)^{4} < C_2, \; j\in \{1,\ldots,\nd\}\,,
\]
\noindent 
where $C_1, C_2 > 0$ are generic constants. Then we arrive at  
\begin{equation} \label{eqn:CDbig} 
\begin{aligned}
\lambda^{\epsilon}_{C} (\delta, \bm{h}, \bm{\xi})  \geq C\left( \frac{|\bm{\xi}|}{\delta}\right)^2  \prod_{j=1}^{\nd} h_j\,,
\end{aligned}
\end{equation}
for $\bm{\xi}\in\bm{Q}$. 

Next, use the fact that 
\[
1-\textnormal{cos}(x) \leq x^2, \quad \textnormal{for } x\geq 0,
\]
to obtain, for any $\bm{r} \in \mathbb{Z}^{\textnormal{d}}$,
\begin{equation} \notag 
\begin{aligned}
\lambda_{1} \left(\delta(\bm{\xi}+2\pi\bm{r}) \oslash \bm{h} \right) & \leq \int_{B_{1}(\bm{0})} \rho(|\bm{s}|) \delta^2 |\bm{s}|^2 |(\bm{\xi}+2\pi\bm{r}) \oslash {\bm{h}} |^2 d\bm{s}, \\
& \leq C \left(\frac{\delta |(\bm{\xi}+2\pi\bm{r})|}{h_{\textnormal{min}}} \right)^2\int_{B_{1}(\bm{0})} \omega(|\bm{s}|) \rho(|\bm{s}|) |\bm{s}|^2  d\bm{s} ,  \\
& \leq C  |(\bm{\xi}+2\pi\bm{r})|^2\,,
\end{aligned}
\end{equation}
where we have used $M_0=\delta/h_{\max}$ is fixed and that $\bm{h}$ is quasi-uniform ($h_{\max}/h_{\min}$ is bounded above and below). 
Hence we have 
\begin{equation} \notag 
\begin{aligned}
&\sum_{\bm{r} \in \mathbb{Z}^{\nd}} \lambda_1\left(\delta(\bm{\xi} + 2 \pi \bm{r}) \oslash{\bm{h}} \right) \prod_{j=1}^{\nd} h_j \left(\frac{\textnormal{sin}(\xi_j /2)}{\xi_j + 2 \pi r_j} \right)^{4} \\
& \leq C \sum_{\bm{r}\in \mathbb{Z}^{\nd}} |\bm{\xi}+2\pi\bm{r}|^{2}\prod_{j=1}^{\nd} h_j \left(\frac{\textnormal{sin}(\xi_j /2)}{\xi_j + 2 \pi r_j} \right)^{4},  \\
& \leq C|\bm{\xi}|^4 \sum_{\bm{r}\in \mathbb{Z}^{\nd}} |\bm{\xi}+2\pi\bm{r}|^{2}\prod_{j=1}^{\nd} h_j \left(\frac{1}{\xi_j + 2 \pi r_j} \right)^{4}\\
& \leq  C |\bm{\xi}|^4 \prod_{j=1}^{\nd} h_j  \sum_{\bm{r} \in \mathbb{Z}^{\nd}} \prod_{j=1}^{\nd} \frac{1}{ |{\xi}_j + 2 \pi {r_j}|^2}, \\
& \leq  C |\bm{\xi}|^2\prod_{j=1}^{\nd} h_j .
\end{aligned}
\end{equation}
Immediately, we have the following bound for $\lambda_C(\delta, \bm{h}, \bm{\xi})$ 
\begin{equation} \label{eqn:CCsmall}
\lambda_{C} (\delta, \bm{h}, \bm{\xi})  \leq C  \left( \frac{|\bm{\xi}|}{\delta} \right)^2   \prod_{j=1}^{\nd} h_j. 
\end{equation}
Finally, \cref{eqn:CC} is shown by combining \cref{eqn:CDbig} and \cref{eqn:CCsmall}. \end{proof}

\begin{proof}[Proof of \cref{thm:DStability}] 
By applying \cref{lem:FDC}, the proof follows similarly to the proof of \cref{thm:stability}.
\end{proof}

\subsection{Convergence of the RK collocation for quasi-discrete nonlocal diffusion} 
In this subsection, we establish the convergence of the RK scheme \cref{eqn:DCollocationScheme} to the 
corresponding local problem as 
$h_{\max}\to 0 $ with a fixed ratio $M_0 =\delta / h_{\max}$. 
We show first the discrete model error between the quasi-discrete nonlocal diffusion model and its local limit.  
\begin{lemma} \label{lem:DConsistency}
\textbf{(Discrete model error II)} Assume $u(\bm{x}) \in C^4(\R^{\nd})$, then
\begin{equation} \notag 
\left| r^h\mathcal{L}^{\epsilon}_{\delta}\Pi^hu - r^h \mathcal{L}_{0} u \right|_{h} \leq C  |u^{(4)}|_{\infty} (h_{\max}^2 + \delta^2).
\end{equation}
\end{lemma}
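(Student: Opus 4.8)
The plan is to mirror the two-term structure used for \cref{lem:DiscreteME}, but with one modification forced by the low regularity of the interpolant. Since $\Pi^h u$ is a tensor product of cubic B-splines it is only $C^2$ (the spline in \cref{eqn:CubicSpline} has a jump in its third derivative), so I cannot feed $\Pi^h u$ into \cref{lem:DiffernceNonlocal}, which compares $\mathcal{L}^{\epsilon}_{\delta}$ with $\mathcal{L}_{\delta}$ by a $C^4$ Taylor argument. To get around this I would split so that the smooth $u$ is the one passed to the continuum-versus-quasidiscrete comparison, and isolate the interpolation error $E := \Pi^h u - u$, which is only $C^2$, into its own term. Concretely, using linearity of $\mathcal{L}^{\epsilon}_{\delta}$, at each grid point
\[
\mathcal{L}^{\epsilon}_{\delta} \Pi^h u - \mathcal{L}_0 u = \mathcal{L}^{\epsilon}_{\delta} E + \left(\mathcal{L}^{\epsilon}_{\delta} u - \mathcal{L}_{\delta} u\right) + \left(\mathcal{L}_{\delta} u - \mathcal{L}_0 u\right),
\]
and I would bound the three pieces by $C h_{\max}^2 |u^{(4)}|_\infty$, $C\delta^2|u^{(4)}|_\infty$ and $C\delta^2|u^{(4)}|_\infty$ respectively.

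The second and third terms are immediate from results in hand: \cref{lem:DiffernceNonlocal} applied to $u \in C^4(\R^{\nd})$ gives $|\mathcal{L}^{\epsilon}_{\delta} u - \mathcal{L}_{\delta} u| \leq C\delta^2|u^{(4)}|_\infty$, and the local-limit (continuum) estimate $|\mathcal{L}_{\delta} u - \mathcal{L}_0 u| \leq C\delta^2|u^{(4)}|_\infty$ used already in the proof of \cref{lem:DiscreteME} handles the third. The real work, replacing \cref{lem:consistency}, is the term $\mathcal{L}^{\epsilon}_{\delta} E$. Here I would first use the symmetry of the quadrature set $B^{\epsilon}_{\delta}(\bm{0})$ together with $\omega_{\delta},\rho_{\delta}$ being radial to rewrite the quasi-discrete operator on $E$ as a symmetric second difference, absorbing the factor $2$ in \cref{eqn:DiscreteNonlocalOperator}:
\[
\mathcal{L}^{\epsilon}_{\delta} E(\bm{x_k}) = \sum_{\bm{s} \in B^{\epsilon}_{\delta}(\bm{0})} \omega_{\delta}(\bm{s})\rho_{\delta}(|\bm{s}|)\bigl(E(\bm{x_k}+\bm{s}) + E(\bm{x_k}-\bm{s}) - 2E(\bm{x_k})\bigr).
\]
Applying \cref{lem:synchronizedconvergence} to $E$ exactly as in \cref{eqn:RemainderError} gives $|E(\bm{x_k}+\bm{s}) + E(\bm{x_k}-\bm{s}) - 2E(\bm{x_k})| \leq C|\bm{s}|^2 |u^{(4)}|_\infty h_{\max}^2$, which needs only $E \in C^2$ and is therefore legitimate for the spline interpolant.

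It then remains to control $\sum_{\bm{s}} \omega_{\delta}(\bm{s})\rho_{\delta}(|\bm{s}|)|\bm{s}|^2$ uniformly in $\delta$, and this is where the reproducing condition \cref{eqn:DQuadMoment} enters. Summing the quadratic reproduction identity over $|\bm{\alpha}|=2$ and discarding the mixed moments $\sum_{\bm{s}}\omega_{\delta}\rho_{\delta}\,s_j s_k$ ($j\neq k$), which vanish by the coordinate-reflection symmetry of $B^{\epsilon}_{\delta}(\bm{0})$, yields $\sum_{\bm{s}}\omega_{\delta}(\bm{s})\rho_{\delta}(|\bm{s}|)|\bm{s}|^2 = \nd$, a constant independent of $\delta$ (the quasi-discrete analogue of the bounded second moment \cref{eqn:boundedmoment}). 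Combined with $\omega_{\delta},\rho_{\delta}\geq 0$ and the previous estimate, this gives the pointwise bound $|\mathcal{L}^{\epsilon}_{\delta} E(\bm{x_k})| \leq C h_{\max}^2 |u^{(4)}|_\infty$. Finally I would convert the three pointwise bounds into the $|\cdot|_h$ norm exactly as in \cref{lem:consistency,lem:DiscreteME} via the norm equivalence of \cref{lem:L2l2}, obtaining the claimed $C(h_{\max}^2 + \delta^2)|u^{(4)}|_\infty$. The main obstacle is the regularity bookkeeping behind the split: one must apply \cref{lem:DiffernceNonlocal} to $u$ rather than to $\Pi^h u$, and the interpolation-error term survives only because the symmetric second-difference structure pairs the $O(|\bm{s}|^2 h_{\max}^2)$ consistency estimate against the $\delta$-uniform second moment $\nd$; a first-difference estimate would instead produce an uncontrolled $C h_{\max}^2/\delta$.
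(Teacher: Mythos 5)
Your proof is correct and follows essentially the same route as the paper: the same three-term split $\mathcal{L}^{\epsilon}_{\delta}E + (\mathcal{L}^{\epsilon}_{\delta}u - \mathcal{L}_{\delta}u) + (\mathcal{L}_{\delta}u - \mathcal{L}_{0}u)$, with the first term handled by symmetrizing the quadrature sum and invoking \cref{lem:synchronizedconvergence}, and the latter two by \cref{lem:DiffernceNonlocal} and the continuum model error. Your explicit derivation of the $\delta$-uniform second moment $\sum_{\bm{s}}\omega_{\delta}(\bm{s})\rho_{\delta}(|\bm{s}|)|\bm{s}|^2=\nd$ from \cref{eqn:DQuadMoment}, and your observation that \cref{lem:DiffernceNonlocal} must be applied to $u$ rather than to the merely $C^2$ interpolant, simply make explicit two details the paper's proof leaves implicit.
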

\begin{proof} In order to prove this Lemma, we need an intermediate result. Similar to the proof of \cref{lem:consistency}, for $\bm{x_k} \in \square $,   
\begin{equation} \label{eqn:DiscreteOnRK}
\begin{aligned}
\left| \mathcal{L}^{\epsilon}_{\delta} \Pi^h u(\bm{x_k}) - \mathcal{L}^{\epsilon}_{\delta} u(\bm{x_k}) \right| 
&=  \left| \sum_{\bm{s} \in B^{\epsilon}_{\delta}(\bm{0})} \omega_{\delta}(\bm{s}) \rho_{\delta}(\bm{s}) \left( E(\bm{x_k}+\bm{s}) - E(\bm{x_k}) \right) \right|, \\
 & \leq C  h_{\textnormal{max}}^2 |u^{(4)}|_{\infty} \sum_{\bm{s} \in B^{\epsilon}_{\delta}(\bm{0})} \omega_{\delta}(\bm{s}) \rho_{\delta}(\bm{s}) |\bm{s}|^2 , \\
 & \leq C h_{\textnormal{max}}^2 | u^{(4)} |_{\infty} ,
\end{aligned}
\end{equation}

Then, by combining \cref{eqn:DiscreteOnRK} and  \cref{lem:DiffernceNonlocal} and using the RK interpolation, the discrete model error of collocation scheme is given as
\begin{align*} 
\left| r^h\mathcal{L}^{\epsilon}_{\delta} \Pi^hu -  r^h \mathcal{L}_{0} u \right|_{h} &\leq \left| r^h \mathcal{L}^{\epsilon}_{\delta}  \Pi^hu -  r^h \mathcal{L}^{\epsilon}_{\delta} u  \right|_{h}  + \left| r^h \mathcal{L}^{\epsilon}_{\delta}  u - r^h  \mathcal{L}_{\delta} u \right|_{h} \\  
& \quad + \left| r^h  \mathcal{L}_{\delta}  u - r^h \mathcal{L}_{0} u  \right|_{h}, \\ 
&\leq C (h^2_{\textnormal{max}}+\delta^2 + \delta^2)| u^{(4)} |_{\infty} . 
\end{align*}
\end{proof}

Combining \cref{thm:DStability} and \cref{lem:DConsistency}, the numerical solution of \cref{eqn:DCollocationScheme} converges to its local limit. We present the theorem without proof since it is similar to the proof of \cref{thm:AC}.

\begin{theorem}  \label{thm:QAC}
Assume the local exact solution $u^0$ is sufficiently smooth, i.e., $u^0 \in C^4(\overline{ \Omega_{\delta_0}})$. For any $\delta \in (0, \delta_0]$, let $u^{\delta,  \epsilon, h}$ be the numerical solution of the collocation scheme with meshfree integration \cref{eqn:DCollocationScheme} and fix the ratio between $\delta$ and $h_{\max}$. Then,
\begin{equation} \notag 
\|u^0 - u^{\delta, \epsilon, h} \|_{L^2(\Omega)} \leq C (h_{\max}^2 + \delta^2).
\end{equation}
\end{theorem}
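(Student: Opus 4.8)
The plan is to mirror the proof of \cref{thm:AC} almost verbatim, replacing the nonlocal operator $\mathcal{L}_{\delta}$ by the quasi-discrete operator $\mathcal{L}^{\epsilon}_{\delta}$, the stability estimate \cref{thm:stability} by \cref{thm:DStability} (Stability II), and the discrete model error \cref{lem:DiscreteME} by \cref{lem:DConsistency} (Discrete model error II). First I would note that $u^0 \in C^4(\overline{\Omega_{\delta_0}})$ may be extended to a function in $C^4(\R^{\nd})$; since all quantities below only involve grid points in $\Omega$ and operators of range $\delta\le\delta_0$, the extension outside $\overline{\Omega_{\delta_0}}$ does not affect the estimates, and it lets \cref{lem:DConsistency} apply to $u^0$. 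The two governing identities are the scheme $-r^h_{\Omega}\mathcal{L}^{\epsilon}_{\delta}u^{\delta,\epsilon,h}=r^h_{\Omega}f_\delta$ and the local equation $-r^h_{\Omega}\mathcal{L}_0 u^0=r^h_{\Omega}f_0$, where $\max_{\bm{x}}|f_\delta(\bm{x})-f_0(\bm{x})|=O(\delta^2)$ by \cref{eqn:2ndorderassump}. Because $u^0$ vanishes on $\partial\Omega$ and outside, $\Pi^h u^0\in S(\square\cap\Omega)$, so the difference $\Pi^h u^0-u^{\delta,\epsilon,h}$ lies in $S(\square\cap\Omega)$ and \cref{thm:DStability} is applicable to it.

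The core of the argument is the chain of inequalities. Applying \cref{thm:DStability}, substituting $\mathcal{L}^{\epsilon}_{\delta}u^{\delta,\epsilon,h}=-f_\delta$ and $\mathcal{L}_0 u^0=-f_0$ at the grid points, and invoking \cref{lem:DConsistency} together with the source-data consistency, I would write
\begin{align*}
\|\Pi^h u^{0}-u^{\delta,\epsilon,h}\|_{L^2(\R^\nd)}
&\leq C\left|r^h_{\Omega}\mathcal{L}^{\epsilon}_{\delta}\left(\Pi^h u^{0}-u^{\delta,\epsilon,h}\right)\right|_{h},\\
&\leq C\left|r^h_{\Omega}\mathcal{L}^{\epsilon}_{\delta}\Pi^h u^{0}-r^h_{\Omega}\mathcal{L}^{\epsilon}_{\delta}u^{\delta,\epsilon,h}\right|_{h},\\
&\leq C\left|r^h_{\Omega}\mathcal{L}^{\epsilon}_{\delta}\Pi^h u^{0}-r^h_{\Omega}\mathcal{L}_{0}u^{0}+r^h_{\Omega}f_\delta-r^h_{\Omega}f_0\right|_{h},\\
&\leq C(h_{\max}^2+\delta^2).
\end{align*}
Here the $h_{\max}^2+\delta^2$ bound on the $\mathcal{L}^{\epsilon}_{\delta}\Pi^h u^0-\mathcal{L}_0 u^0$ part is exactly \cref{lem:DConsistency}, while the contribution of $r^h_{\Omega}(f_\delta-f_0)$ is $O(\delta^2)$ after passing from $|\cdot|_h$ to the discrete $l^2$ norm via \cref{lem:L2l2} (the number of grid points in $\Omega$ scales like $\prod_j h_j^{-1}$, which cancels the volume factor and leaves $O(\delta^2)$). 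The theorem then follows from the triangle inequality and the second-order approximation property of the RK interpolant,
\begin{equation*}
\|u^{0}-u^{\delta,\epsilon,h}\|_{L^2(\R^\nd)}\leq\|u^{0}-\Pi^h u^{0}\|_{L^2(\R^\nd)}+\|\Pi^h u^{0}-u^{\delta,\epsilon,h}\|_{L^2(\R^\nd)}\leq C(h_{\max}^2+\delta^2).
\end{equation*}

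The one structural point I would watch, rather than a genuine obstacle, is uniformity of the constant $C$. Unlike \cref{thm:AC}, the stability constant in \cref{thm:DStability} depends on the fixed ratio $M_0=\delta/h_{\max}$ and on the quadrature set $B^{\epsilon_1}_1(\bm{0})$; freezing $M_0$ (as the statement assumes) is precisely what keeps $C$ independent of $h_{\max}$ and $\delta$. Since $\delta=M_0 h_{\max}$ makes the two error contributions $h_{\max}^2$ and $\delta^2$ of the same order, the bound could equivalently be stated as $O(h_{\max}^2)$. I therefore expect no new difficulty beyond careful bookkeeping to confirm that the $\delta^2$ terms arising from the quadrature model error in \cref{lem:DConsistency} and from the source-data consistency \cref{eqn:2ndorderassump} are both controlled uniformly once $M_0$ is held fixed.
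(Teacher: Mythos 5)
Your proposal is correct and follows essentially the same route the paper intends: the paper omits the proof of this theorem precisely because it mirrors that of \cref{thm:AC} with \cref{thm:stability} replaced by \cref{thm:DStability} and \cref{lem:DiscreteME} replaced by \cref{lem:DConsistency}, which is exactly the chain of inequalities you write down. Your additional bookkeeping of the $r^h_{\Omega}(f_\delta-f_0)$ term and of the dependence of the stability constant on the fixed ratio $M_0=\delta/h_{\max}$ is consistent with how the paper handles these points in \cref{thm:AC} and \cref{thm:DStability}.
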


\section{Numerical Example} \label{sec:NumericalExample}
In this section, numerical examples in two dimensions are conducted to validate the convergence analysis in the previous sections. We let $\Omega =  (0,1)^2$, and use the manufactured solution $u(x_1,x_2) = x_1^2(1-x_1^2)+x_2^2(1-x_2^2)$ to calculate $f_0 = -\cL_0 u $ and $f_\del = - \cL_\del u$, such that,
\begin{equation} \notag 
f_0(\bm{x}) = 12(x_1^2+x_2^2)-4 \textnormal{ and } f_\delta(\bm{x}) = f_0(\bm{x}) + 2\delta^2.
\end{equation}
We investigate the convergence rate of the RK collocation scheme in \cref{sec:RKCollocation} for the following nonlocal equation
\begin{equation}  \label{eqn:NLEx}
\begin{cases}
-\mathcal{L}_{\delta} u(\bm{x}) = f_\delta(\bm{x}), & \bm{x} \in \Omega, \\
u(\bm{x}) = x_1^2(1-x_1^2)+x_2^2(1-x_2^2), & \bm{x} \in \Omega_{\cI}.
\end{cases}
\end{equation}
The exact solution of \cref{eqn:NLEx} is given by the manufactured solution $u$. 
The nonlocal kernel is chosen as $\rho_{\delta}(|\bm{s}|) = \frac{4}{\pi \delta^4} \chi(|\bm{s}\leq 1|)$.
To verify the asymptotic compatibility of our scheme where $\delta$ also goes to zero, 
we replace the right hand side $f_\delta$ of \eqref{eqn:NLEx} with $f_0$,
and test the convergence of the numerical solution of the following nonlocal diffusion problem 
\begin{equation}  \label{eqn:NTLEx}
\begin{cases}
-\mathcal{L}_{\delta} u(\bm{x}) = f_0(\bm{x}), & \bm{x} \in \Omega, \\
u(\bm{x}) = x_1^2(1-x_1^2)+x_2^2(1-x_2^2), & \bm{x} \in \Omega_{\cI},
\end{cases}
\end{equation}
to the solution of the local problem given by 
\begin{equation}  \label{eqn:NTLEx_local}  
\begin{cases}
-\Delta u(\bm{x}) = f_0(\bm{x}), & \bm{x} \in \Omega, \\
 \quad \quad u(\bm{x}) = x_1^2(1-x_1^2)+x_2^2(1-x_2^2), & \bm{x} \in  \partial \Omega.
\end{cases}
\end{equation}
We apply the two collocation schemes in \cref{sec:RKCollocation,sec:QuasiDiscrete} and investigate their convergence properties in \cref{subsec:collocation_numer,subsec:meshfree_numer} respectively. 
{To implement the non-homogeneous boundary condition for the nonlocal problem, we use the RK interpolation of exact boundary values on $\Om_\cI$. Notice that the boundary conditions in \cref{eqn:NLEx,eqn:NTLEx} are imposed such that the regularity assumptions of exact solutions across the extended domain $\overline{\Om\cup\Om_\cI}$ are satisfied, as stated in the convergence theorems in \cref{sec:CAnalysis,sec:DCAnalysis},. In \cref{subsec:boundary_numer}, we use numerical examples to demonstrate that convergence rates may be compromised if the boundary conditions are not imposed appropriately to meet the regularity assumptions.

\subsection{RK collocation} \label{subsec:collocation_numer}
We first use the RK collocation method as described in \cref{sec:RKCollocation} and choose the discretization parameter as $h_1 = 2h_2$ (so $h_{\textnormal{max}} = h_1$), and then study the convergence of the numerical the collocation scheme \cref{eqn:CollocationScheme}. 
Numerical integration needs to be performed in order to obtain the stiffness matrix of the scheme  \cref{eqn:CollocationScheme}. 
We use Gauss quadrature points proposed in \cite{Pasetto2018} for the numerical integration on circular regions in the neighborhood of radius $\delta$ of each nodal point $\bm{x}_{\bm{k}}\in (\square \cap \Omega)$. To avoid integration error, we use  $\approx 10^3$ 
Gauss quadrature points in the $\delta$-neighborhood of each nodal point. More details on the discussion of Gauss quadrature points can be found in \cite{Pasetto2018}. 

Convergence profiles are shown in \cref{fig:convergence}. We investigate the convergence behaviour when the nonlocal length scale $\delta$ is coupled with discretization parameter $h_{\textnormal{max}}$ in various ways. When $\delta$ is fixed, we solve the nonlocal equation \cref{eqn:NLEx} and the numerical solution converges to the nonlocal solution  at a second-order convergence rate. When both $\delta$ and $h_{\textnormal{max}}$ are changing, we solve the nonlocal equation \cref{eqn:NTLEx} the numerical solution converges to the local limit \cref{eqn:NTLEx_local}.  When $\delta$ goes to zero faster ($\delta = h_{\textnormal{max}}^2$) and at the same rate as $h_{\textnormal{max}}$, ($\delta = h_{\textnormal{max}}$), second-order convergence rates are observed. When $\delta$ approaches zero slower than $h_{\textnormal{max}}$, ($\delta=\sqrt{h_{\textnormal{max}}}$), we observe first-order convergence rate. The numerical examples agree with \cref{thm:AC} and this verifies that the RK collocation method is asymptotically compatible. 



\begin{figure}[htb!]
\captionsetup[subfigure]{font=scriptsize, labelfont=scriptsize}
\begin{subfigure}[b]{0.5\textwidth} 
\centering
\scalebox{0.4}{\includegraphics{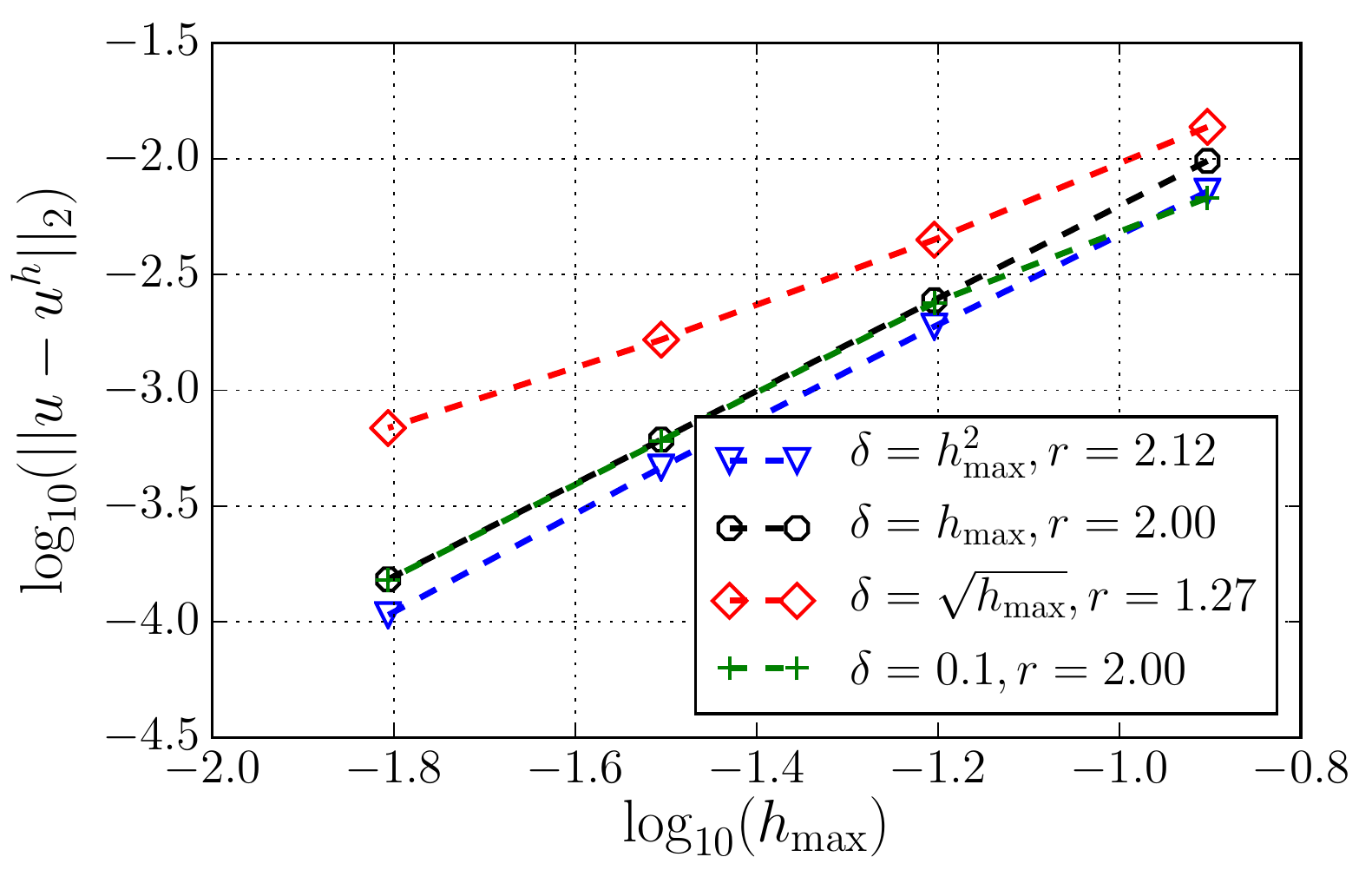}}
\caption{Gauss integration}
\label{fig:convergence}
\end{subfigure}
\hspace{-.5cm}
\begin{subfigure}[b]{0.5\textwidth} 
\centering
\scalebox{0.4}{\includegraphics{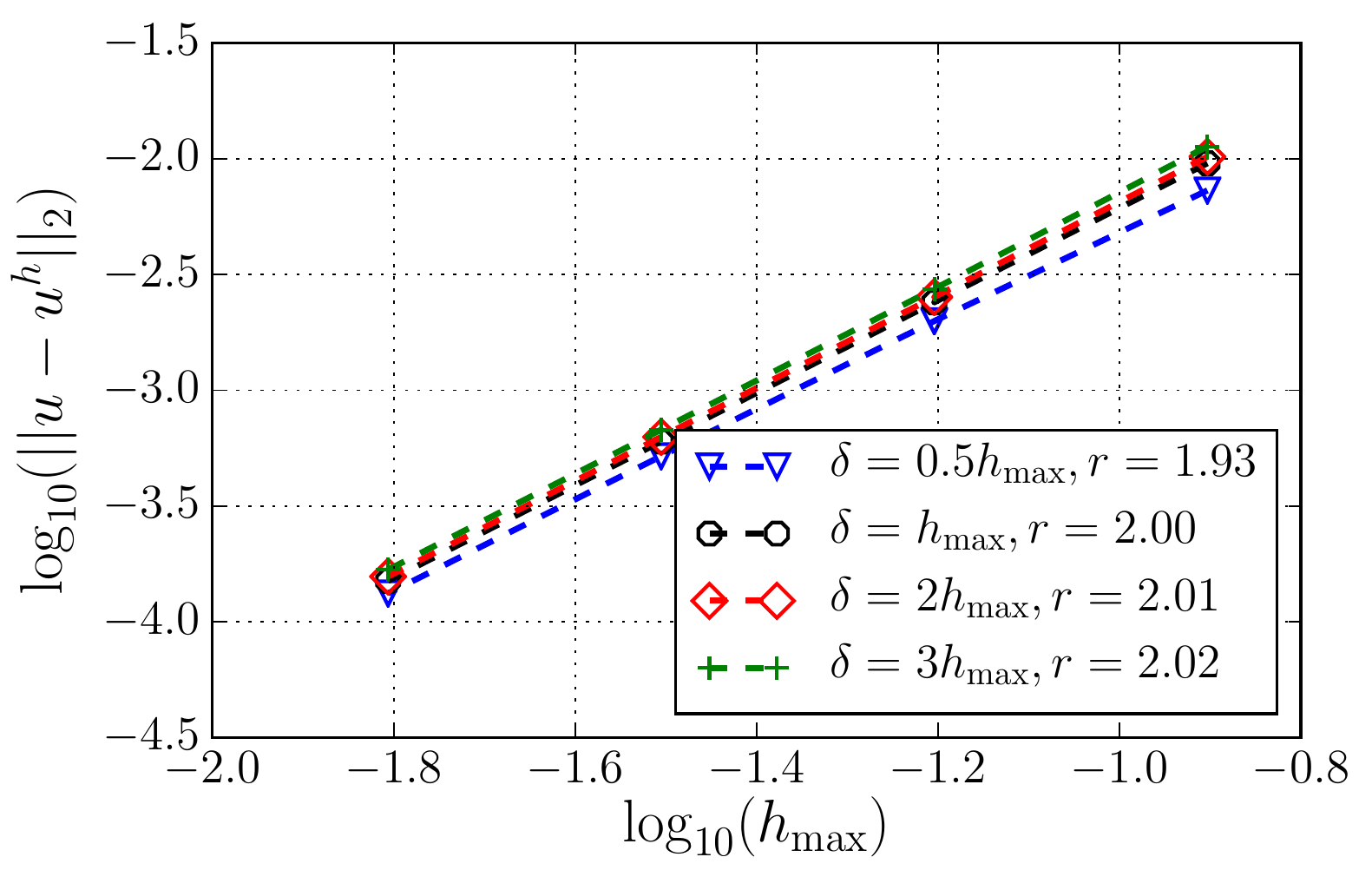}}
\caption{Meshfree integration}
\label{fig:MFconvergence}
\end{subfigure}
\caption{Convergence profiles using the RK collocation method.} 
\end{figure}

\subsection{RK collocation with meshfree integration}
\label{subsec:meshfree_numer}
As discussed in the previous section, high-order Gauss quadrature points are necessary to evaluate the integral for the RK collocation method, making the method computationally expensive. In practice, we sometimes couple grid size with horizon as $\delta=M_0 h_{\max}$ so that $\delta$ goes to $0$ at the same rate as $h$ approaches $0$. Now, we use the RK collocation method with meshfree integration as discussed in \cref{sec:QuasiDiscrete} to solve \cref{eqn:NTLEx} and only study the convergence to the local limit. We use $\delta = 3 \epsilon$ in this experiment where the $\delta$-neighborhood of each nodal point contains $29$ quadratic points (as depicted in \cref{fig:DiscreteBall}). Convergence profiles are presented in \cref{fig:MFconvergence}.  We observe a second-order convergence rate in agreement with \cref{thm:QAC}. Therefore the RK collocation method with meshfree integration converges to the correct local limit. 
This numerical experiment shows that the number of quadratic points can be significantly reduced for the case of small $\delta$ by using the meshfree integration techique.

\subsection{The test on the effect of boundary conditions}
\label{subsec:boundary_numer}
The numerical experiments in \cref{subsec:collocation_numer,subsec:meshfree_numer} are constructed with proper boundary conditions to ensure that exact solutions are sufficiently smooth across the boundary set as needed in the convergence theorems given in \cref{sec:CAnalysis,sec:DCAnalysis}. In this subsection, we use numerical experiments to show that improperly imposed boundary conditions might lead to reduced convergence rates.
We take $\Om=(0,1)^2$ and choose $u(x_1,x_2) = x_1^2x_2^2(1-x_1^2)(1-x_2^2)$ as the manufactured solution. The corresponding  $f_0= -\Delta u$ is given by
\[
f_0= (12x_1^2 -2)x_2^2(1-x_2^2) + (12x_2^2 -2)x_1^2(1-x_1^2),
\]
and $u(\bm{x}) = 0, \forall \bm{x} \in \partial\Om$. We study the convergence of the numerical solution of the nonlocal problem $-\cL_\del u = f_0$  to the local problem
\begin{equation} 
\label{eqn:NTLEx_local2}
\begin{cases}
-\Delta u(\bm{x}) = f_0(\bm{x}), & \bm{x} \in \Omega, \\
 \quad \quad u(\bm{x}) = 0, & \bm{x} \in  \partial \Omega,
\end{cases}
\end{equation}
as $\delta \to 0$. Nonlocal boundary conditions are imposed as follows.
\begin{remark} \label{rmk:imposebc}
There are two ways to impose the nonlocal boundary condition on $\Om_\cI$: 
\begin{enumerate}[label=(\roman*)] 
    \item $u(\bm{x}) = x_1^2x_2^2(1-x_1^2)(1-x_2^2), \quad \bm{x} \in \Om_\cI$; \label{eqn:bce} 
    \item $u(\bm{x}) = 0, \quad \bm{x} \in \Om_\cI $.  \label{eqn:bc0}
\end{enumerate}
\end{remark}

By imposing boundary condition as \cref{rmk:imposebc}\ref{eqn:bce} which is also conducted in \cref{subsec:collocation_numer,subsec:meshfree_numer}, we effectively assume the exact solution of \cref{eqn:NTLEx_local2} is extended smoothly outside the domain $\Om$ so that the regularity assumptions in \cref{thm:AC,thm:QAC} are satisfied. In contrast, using \cref{rmk:imposebc}\ref{eqn:bc0} as the boundary condition we violate the regularity assumptions even though the manufactured solution vanish on $\partial\Om$. Convergence profiles are presented in \cref{fig:gaussbc,fig:mfbc}. The results for the two collocation methods presented in \cref{sec:RKCollocation,sec:QuasiDiscrete} and are consistent.   If the boundary values are imposed exactly as the manufactured solution, i.e., \cref{rmk:imposebc}\ref{eqn:bce}, the convergence rates agree with our analysis, see \cref{fig:diffusione,fig:mfdiffusione}. On the other hand, the boundary condition given by \cref{rmk:imposebc}\ref{eqn:bc0} results in lower convergence rates as shown in \cref{fig:diffusion0,fig:mfdiffusion0}. Related studies on appropriate nonlocal boundary conditions in one dimension can be found in \cite{du2019uniform}. 

\begin{figure}[htb!]
\captionsetup[subfigure]{font=scriptsize, labelfont=scriptsize}
\begin{subfigure}[b]{0.5\textwidth} 
\centering
\scalebox{0.4}{\includegraphics{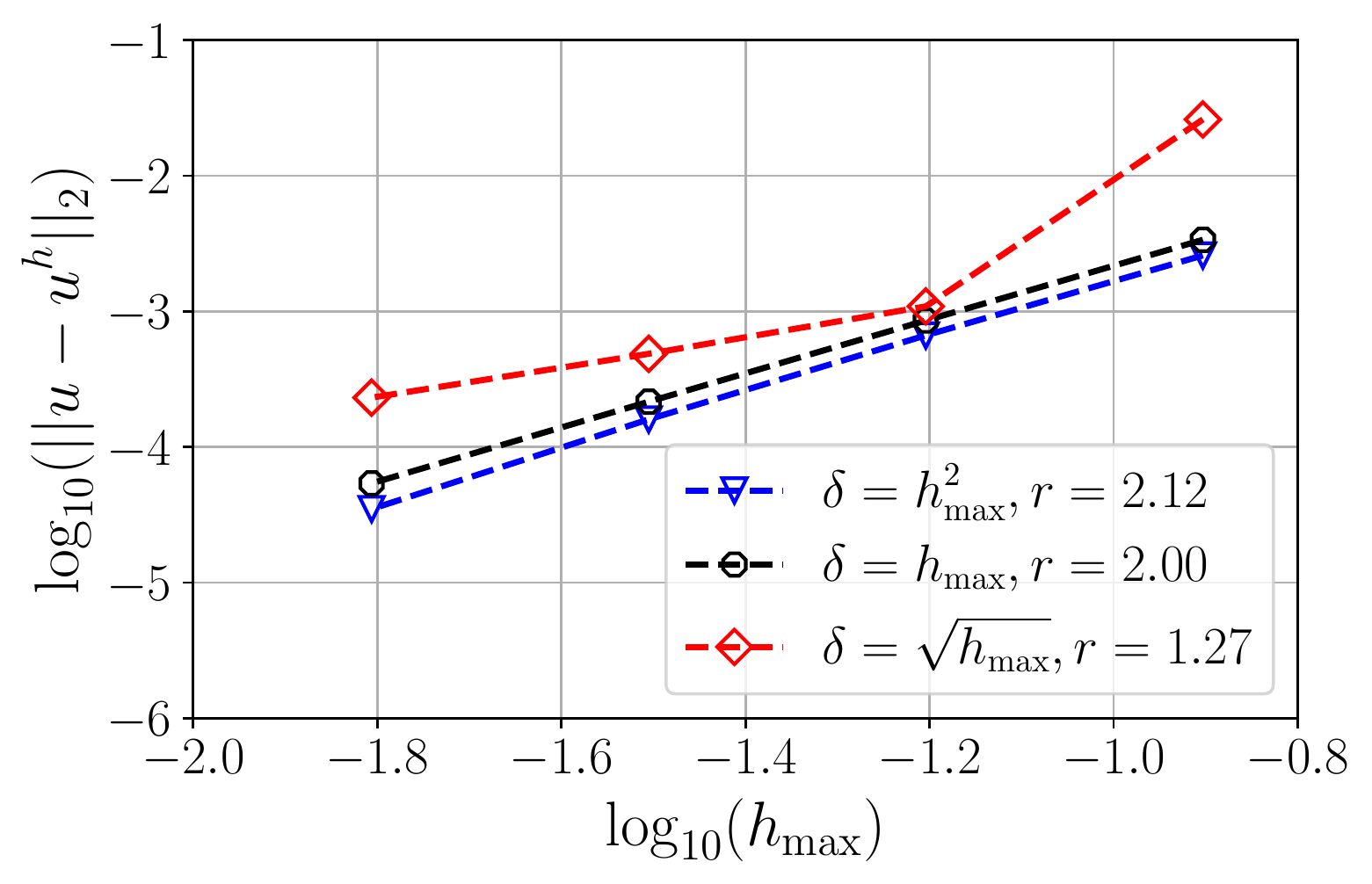}}
\caption{Impose boundary condition \\ as in \cref{rmk:imposebc}\ref{eqn:bce}}
\label{fig:diffusione}
\end{subfigure}
\hspace{-.5cm}
\begin{subfigure}[b]{0.5\textwidth} 
\centering
\scalebox{0.4}{\includegraphics{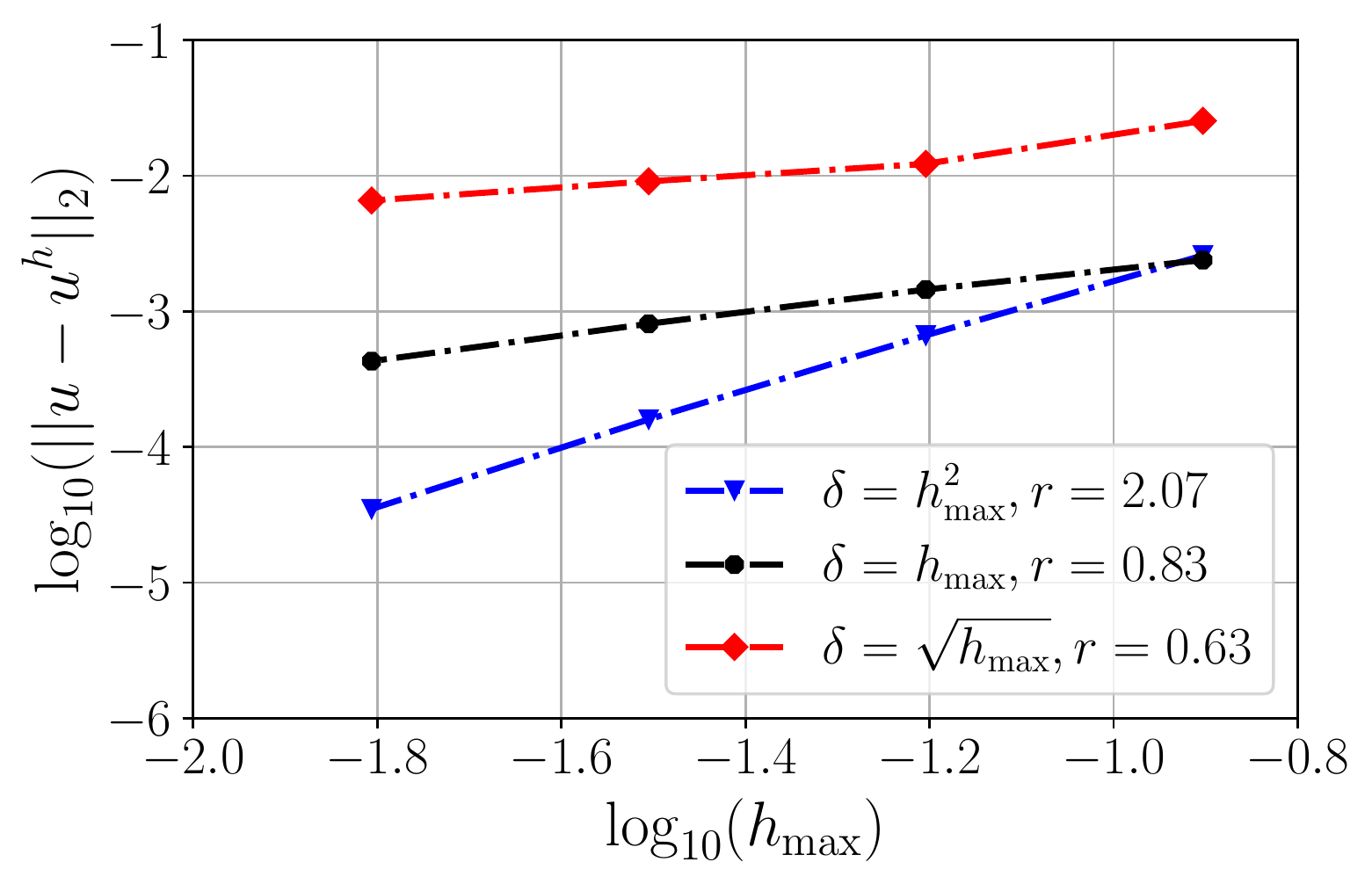}}
\caption{Impose boundary condition \\ as in \cref{rmk:imposebc}\ref{eqn:bc0}}
\label{fig:diffusion0}
\end{subfigure}
\caption{Convergence profiles using the RK collocation method with Gauss integration} 
\label{fig:gaussbc}
\end{figure}

\begin{figure}[htb!]
\captionsetup[subfigure]{font=scriptsize, labelfont=scriptsize}
\begin{subfigure}[b]{0.5\textwidth} 
\centering
\scalebox{0.4}{\includegraphics{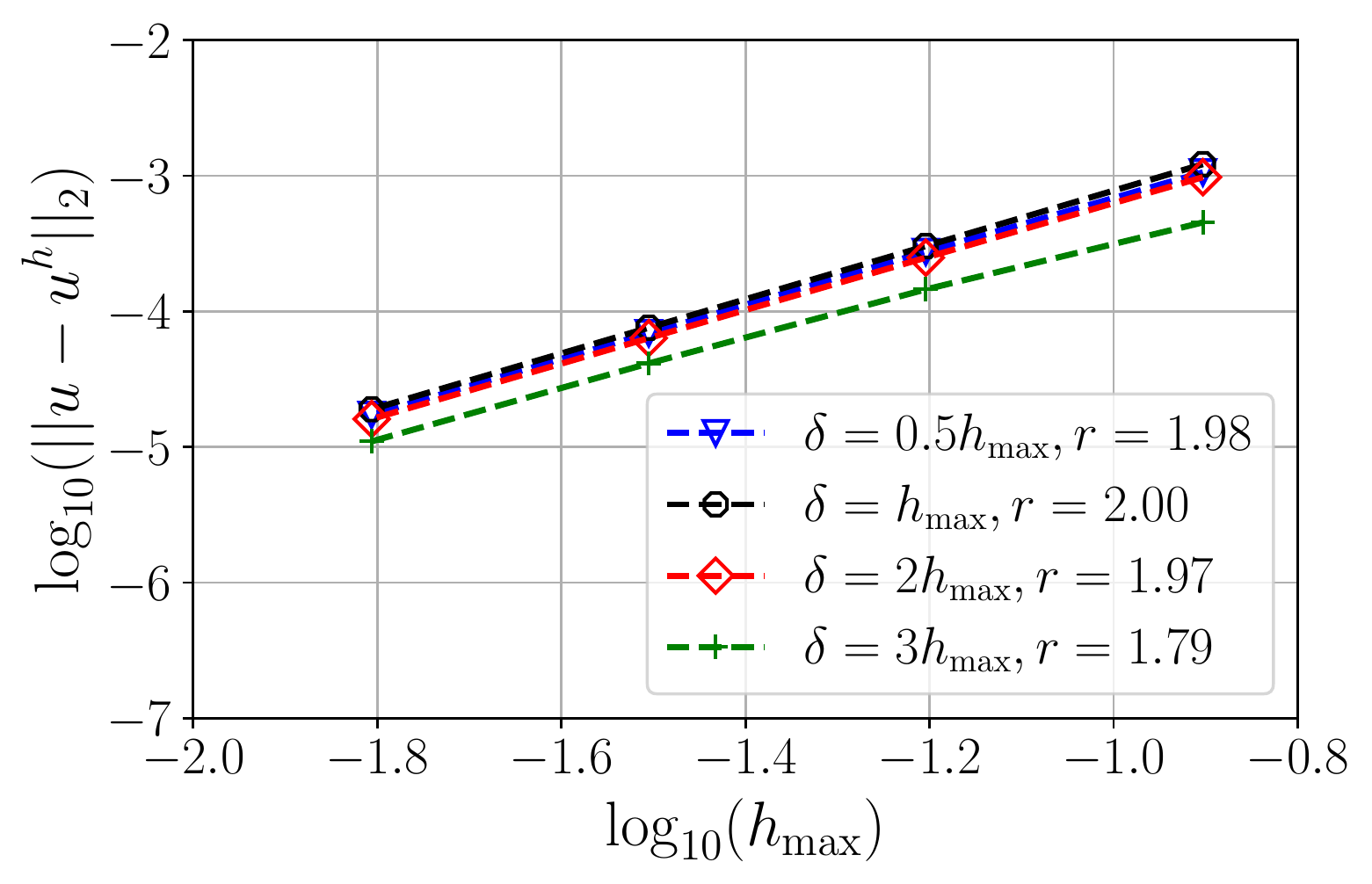}}
\caption{Impose boundary condition \\ as in \cref{rmk:imposebc}\ref{eqn:bce}}
\label{fig:mfdiffusione}
\end{subfigure}
\hspace{-.5cm}
\begin{subfigure}[b]{0.5\textwidth} 
\centering
\scalebox{0.4}{\includegraphics{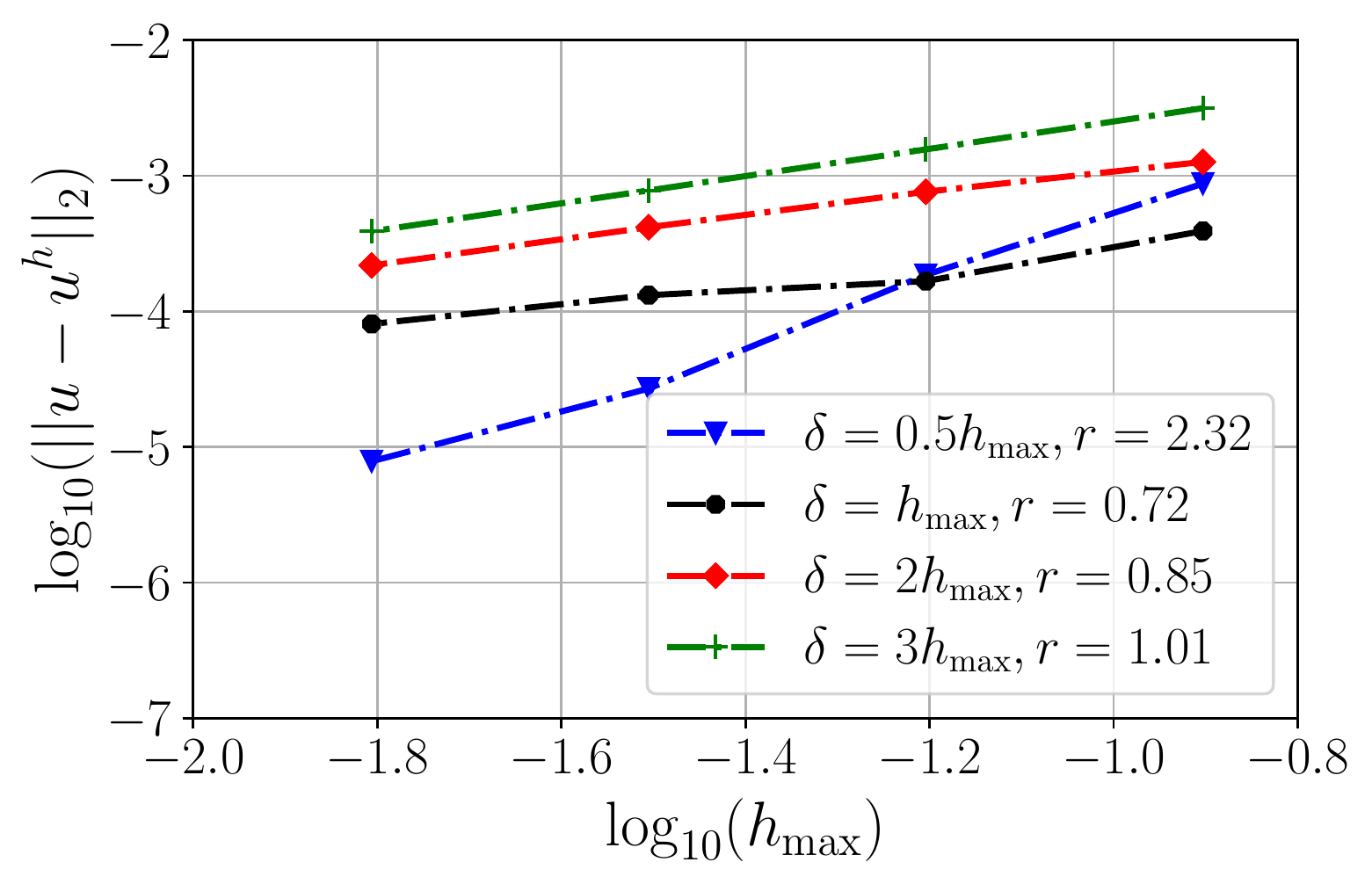}}
\caption{Impose boundary condition \\ as in \cref{rmk:imposebc}\ref{eqn:bc0}}
\label{fig:mfdiffusion0}
\end{subfigure}
\caption{Convergence profiles using the RK collocation method with meshfree integration} 
\label{fig:mfbc}
\end{figure}

\section{Conclusion} \label{sec:Conclusion}
In the first part of this work, we have presented an asymptotically compatible linear RK collocation method with special choices of RK support sizes for nonlocal diffusion models with Dirichlet boundary condition. Numerical solution of the method converges to both the nonlocal solution ($\delta$ fixed) and its local limit ($\delta \to 0$). We have provided stability analysis of this scheme in the case of Cartesian grids with varying resolution in each dimension. 
Since the standard Galerkin scheme  has been proved to be stable, the key idea to show the stability of the collocation scheme was to establish a relationship between the two schemes. Consistency of the collocation scheme is obtained by applying the properties of RK approximation.  
In the second part of this work, we have developed a quasi-discrete nonlocal diffusion operator using a meshfree integration technique,  where the main motivation is to reduce the computational cost by replacing the integral operator with a summation operator with only a few quadrature points inside the $\delta$-neighborhood of each point. The quadrature weights corresponding to the quadrature points are solved under polynomial reproducing conditions.
We unified two approaches, the RKPM and the GMLS approach, to calculate the quadrature weights. 
Under the assumption that the quadrature points to be symmetrically distributed inside the horizon,
we can show that the quadrature weights  are positive, which is crucial for the stability of the method. 
 The numerical solution of the RK collocation method applied to the quasi-discrete nonlocal diffusion operator was shown to converge to the correct local limit. 
Meanwhile, we validated our mathematical analysis by carrying out numerical examples in two dimensions. The order of convergence observed in the numerical examples match our theoretical results. That is, for the RK collocation method, the numerical solution converges to the nonlocal solution for a fixed $\delta$ and its local limit independent of the coupling of $\delta$ and discretization parameter $h_{\max}$; for the RK collocation method with meshfree integration and when the ratiao $\delta/h_{\max}$ is fixed, the numerical solution converges to the correct local limit. 


This work provides a rigorous analysis of collocation methods for nonlocal diffusion models, and there are several future directions needs to be mentioned. First, 
it is natural to extend the framework to the study of more general nonlocal models such as the peridynamics model of continuum mechanics and it is carried out in a separate work \cite{Leng2020}. 
In terms of analysis, the present work is restricted to the linear RK collocation method on the special grids, and we expect to study RK collocation methods more generally including high order methods with general meshes. 
Moreover, we remark that the error estimate is carried out with a strong assumption on the regularity of exact solutions. It is of great interest to improve the error estimate by a reduced regularity assumption on the exact solutions. 
There are also interesting computational work. 
For example, the mesh free integration technique, designed to reduce the computational cost, is only tested with shrinking horizon when the number of quadrature points in the $\delta$-neighborhood of each nodal point can be chosen as a small fixed number. It is also worthwhile to do a quantitative comparison in the future of the number of quadrature points needed in the case of a fixed nonlocal length $\delta$ using the Guass quadrature and meshfree integration technique. 

\section*{Acknowledgements}
The authors acknowledge the support of the SNL Laboratory Directed Research and Development (LDRD) program, and the SNL-UT academic alliance program.  The
Oden Institute is acknowledged for its support. The authors also thank Leszek Demkowicz, Qiang Du  and Marco Pasetto for helpful discussions on the subject. Last but not least, the authors thank the anonymous reviewers for their suggestions to improve the manuscript. 

\bibliographystyle{siam}
\bibliography{mybib2}

\begin{thebibliography}{10}

\bibitem{arnold1984asymptotic}
{\sc D.~N. Arnold and J.~Saranen}, {\em On the asymptotic convergence of spline
  collocation methods for partial differential equations}, SIAM Journal on
  Numerical Analysis, 21 (1984), pp.~459--472.

\bibitem{arnold1983asymptotic}
{\sc D.~N. Arnold and W.~L. Wendland}, {\em On the asymptotic convergence of
  collocation methods}, Mathematics of Computation, 41 (1983), pp.~349--381.

\bibitem{bobaru2016handbook}
{\sc F.~Bobaru, J.~T. Foster, P.~H. Geubelle, and S.~A. Silling}, {\em Handbook
  of peridynamic modeling}, CRC press, 2016.

\bibitem{Bobaru2012}
{\sc F.~Bobaru, Y.~D. Ha, and W.~Hu}, {\em {Damage progression from impact in
  layered glass modeled with peridynamics}}, Central European Journal of
  Engineering, 2 (2012), pp.~551--561.

\bibitem{chen2017asymptotically}
{\sc A.~Chen, Q.~Du, C.~Li, and Z.~Zhou}, {\em Asymptotically compatible
  schemes for space-time nonlocal diffusion equations}, Chaos, Solitons \&
  Fractals, 102 (2017), pp.~361--371.

\bibitem{chen2017meshfree}
{\sc J.~S. Chen, M.~Hillman, and S.~W. Chi}, {\em Meshfree methods: progress
  made after 20 years}, Journal of Engineering Mechanics, 143 (2017),
  p.~04017001.

\bibitem{XChen2011}
{\sc X.~Chen and M.~Gunzburger}, {\em {Continuous and discontinuous finite
  element methods for a peridynamics model of mechanics}}, Computational
  Methods in Applied Mechanics and Engineering, 200 (2011), pp.~1237--1250.

\bibitem{chen2015selecting}
{\sc Z.~Chen and F.~Bobaru}, {\em Selecting the kernel in a peridynamic
  formulation: a study for transient heat diffusion}, Computer Physics
  Communications, 197 (2015), pp.~51--60.

\bibitem{Costabel1992}
{\sc M.~Costabel, F.~Penzel, and R.~Schneilder}, {\em Error analysis of a
  boundary element collocation method for a screen problem in $\mathbb{R}^3$},
  Mathematics of Computation, 58 (1992), pp.~575--586.

\bibitem{du2019nonlocal}
{\sc Q.~Du}, {\em Nonlocal Modeling, Analysis, and Computation: Nonlocal
  Modeling, Analysis, and Computation}, SIAM, 2019.

\bibitem{Du2012}
{\sc Q.~Du, M.~Gunzburger, R.~B. Lehoucq, and K.~Zhou}, {\em Analysis and
  approximation of nonlocal diffusion problems with volume constraints},
  Society for Industrial and Applied Mathematics, 54(4) (2012), pp.~667--1696.

\bibitem{Du2013nonlocal}
{\sc Q.~Du, M.~Gunzburger, R.~B. Lehoucq, and K.~Zhou}, {\em A nonlocal vector
  calculus, nonlocal volume-constrained problems, and nonlocal balance laws},
  Mathematical Models and Methods in Applied Sciences, 23 (2013), pp.~493--540.

\bibitem{du2013posteriori}
{\sc Q.~Du, L.~Ju, L.~Tian, and K.~Zhou}, {\em A posteriori error analysis of
  finite element method for linear nonlocal diffusion and peridynamic models},
  Mathematics of computation, 82 (2013), pp.~1889--1922.

\bibitem{Yang2018}
{\sc Q.~Du, Y.~Tao, X.~Tian, and J.~Yang}, {\em Asymptotically compatible
  discretization of multidimensional nonlocal diffusion models and
  approximation of nonlocal green’s functions}, IMA J. Numerical Analysis,
  144 (2018), pp.~161--186.

\bibitem{Du2016}
{\sc Q.~Du and J.~Yang}, {\em An asymptotically compatible {F}ourier spectral
  approximations of nonlocal {A}llen-{C}ahn equation}, SIAM J. Numerical
  Analysis, 54 (2016), pp.~1899--1919.

\bibitem{du2019uniform}
{\sc Q.~Du, J.~Zhang, and C.~Zheng}, {\em On uniform second order nonlocal
  approximations to linear two-point boundary value problems}, Communications
  in Mathematical Sciences, 17 (2019), pp.~1737--1755.

\bibitem{Xu2016b}
{\sc M.~G. F.~Xu and J.~Burkardt}, {\em A multiscale method for nonlocal
  mechanics and diffusion and for the approximation of discontinuous
  functions}, Computer Methods in Applied Mechanics and Engineering, 307
  (2016), pp.~117--143.

\bibitem{Ha2010}
{\sc Y.~D. Ha and F.~Bobaru}, {\em {Studies of dynamic crack propagation and
  crack branching with peridynamics}}, International Journal of Fracture, 162
  (2010), pp.~229--244.

\bibitem{Han2001}
{\sc W.~Han and X.~Meng}, {\em Error analysis of the reproducing kernel
  particle method}, Computational Methods in applied mechanics and engineering,
  190 (2001), pp.~6157--6181.

\bibitem{hillman2016nodally}
{\sc M.~Hillman and J.~S. Chen}, {\em Nodally integrated implicit gradient
  reproducing kernel particle method for convection dominated problems},
  Computer Methods in Applied Mecïics and Engineering, 299 (2016),
  pp.~381--400.

\bibitem{hu2011perturbation}
{\sc H.-Y. Hu, J.-S. Chen, and S.-W. Chi}, {\em Perturbation and stability
  analysis of strong form collocation with reproducing kernel approximation},
  International Journal for Numerical Methods in Engineering, 88 (2011),
  pp.~157--179.

\bibitem{hu2011error}
{\sc H.-Y. Hu, J.-S. Chen, and W.~Hu}, {\em Error analysis of collocation
  method based on reproducing kernel approximation}, Numerical Methods for
  Partial Differential Equations, 27 (2011), pp.~554--580.

\bibitem{lee2019asymptotically}
{\sc H.~Lee and Q.~Du}, {\em Asymptotically compatible {SPH}-like particle
  discretizations of one dimensional linear advection models}, SIAM Journal on
  Numerical Analysis, 57 (2019), pp.~127--147.

\bibitem{Leng2019a}
{\sc Y.~Leng, X.~Tian, and J.~T. Foster}, {\em Super-convergence of reproducing
  kernel approximation}, Computational Methods in Applied Mechanics and
  Engineering, 352 (2019), pp.~488--507.

\bibitem{Leng2020}
{\sc Y.~Leng, X.~Tian, N.~A. Trask, and J.~T. Foster}, {\em Asymptotically
  compatible reproducing kernel collocation and meshfree integration for the
  peridynamic navier equation}, arXiv preprint arXiv:2001.00649,  (2020).

\bibitem{Li1996}
{\sc S.~Li and W.~K. Liu}, {\em Moving least-square reproducing kernel method
  part ii : Fourier analysis}, Computational Methods in applied mechanics and
  engineering, 139 (1996), pp.~159--193.

\bibitem{Li1998synchronized}
\leavevmode\vrule height 2pt depth -1.6pt width 23pt, {\em Synchronized
  reproducing kernel interpolant via multiple wavelet expansion}, Computational
  Mechanics, 21 (1998), pp.~28--47.

\bibitem{MP:Liu1995}
{\sc W.~K. Liu, S.~Jun, and Y.~F. Zhang}, {\em {Reproducing kernel particle
  methods}}, International Journal for Numerical Methods in Fluids, 20 (1995),
  pp.~1081--1106.

\bibitem{Mengesha2014}
{\sc T.~Mengesha and Q.~Du}, {\em The bond-based peridynamic system with
  {D}irichlet-type volume constraint}, Proceedings of the royal society of
  Edinburgh section A : mathematics, 144 (2014), pp.~161--186.

\bibitem{mengesha2014nonlocal}
\leavevmode\vrule height 2pt depth -1.6pt width 23pt, {\em Nonlocal constrained
  value problems for a linear peridynamic navier equation}, Journal of
  Elasticity, 116 (2014), pp.~27--51.

\bibitem{mirzaei2012generalized}
{\sc D.~Mirzaei, R.~Schaback, and M.~Dehghan}, {\em On generalized moving least
  squares and diffuse derivatives}, IMA Journal of Numerical Analysis, 32
  (2012), pp.~983--1000.

\bibitem{Ouchi2017}
{\sc H.~Ouchi, A.~Katiyar, J.~T. Foster, and M.~M. Sharma}, {\em A peridynamics
  model for the propagation of hydraulic fractures in naturally fractured
  reservoirs}, Society of Petroleum Engineers Journal, 22 (2017),
  pp.~1082--1102.

\bibitem{Pasetto2019}
{\sc M.~Pasetto}, {\em Enhanced meshfree methods for numerical solution of
  local and nonlocal theories of solid mechanics}, PhD thesis, University of
  California at San Diego, 2019.

\bibitem{Pasetto2018}
{\sc M.~Passetto, Y.~Leng, J.~S. Chen, J.~Foster, and P.~Seleson}, {\em A
  reproducing kernel enhanced approach for peridynamic solutions},
  Computational Methods in applied mechanics and engineering, 340 (2018),
  pp.~1044--1078.

\bibitem{Seleson2016}
{\sc P.~Seleson, Q.~Du, and M.~L. Parks}, {\em {On the consistency between
  nearest-neighbor peridynamic discretizations and discretized classical
  elasticity models}}, Computational Methods in Applied Mechanics and
  Engineering, 311 (2016), pp.~698--722.

\bibitem{Seleson2016C}
{\sc P.~Seleson and D.~J. Littlewood}, {\em Convergence studies in meshfree
  peridynamic simulations}, Computers and Mechanics with Applications, 71
  (2016), pp.~2432--2448.

\bibitem{Silling2000}
{\sc S.~A. Silling}, {\em Reformulation of elasticity theory for
  discontinuities and long-range forces}, Journal of the Mechanics and Physics
  of Solids, 48 (2000), pp.~175--209.

\bibitem{Silling2005}
{\sc S.~A. Silling and E.~Askari}, {\em A meshfree method based on the
  peridynamic model of solid mechanics}, Computers and Structures, 83 (2005),
  pp.~1526--1535.

\bibitem{strang2011fourier}
{\sc G.~Strang and G.~Fix}, {\em A fourier analysis of the finite element
  variational method}, in Constructive aspects of functional analysis,
  Springer, 2011, pp.~793--840.

\bibitem{tian2017conservative}
{\sc H.~Tian, L.~Ju, and Q.~Du}, {\em A conservative nonlocal
  convection--diffusion model and asymptotically compatible finite difference
  discretization}, Computer Methods in Applied Mechanics and Engineering, 320
  (2017), pp.~46--67.

\bibitem{Tian2013}
{\sc X.~Tian and Q.~Du}, {\em {Analysis and comparison of different
  approximations to nonlocal diffusion and linear peridynamic equations}}, SIAM
  J. Numerical Analysis, 51(6) (2013), pp.~3458--3482.

\bibitem{Tian2014}
{\sc X.~Tian and Q.~Du}, {\em Asymptotically compatible schemes and
  applications to robust discretization of nonlocal models}, SIAM J. Numerical
  Analysis, 52 (2014), pp.~1641--1665.

\bibitem{Trask2019}
{\sc N.~A. Trask, H.~You, Y.~Yu, and M.~L. Parks}, {\em An asymptotically
  compatible meshfree quadradure rule for nonlocal problems with applications
  to peridynamics}, Computer Methods in Applied Mechanics and Engineering, 343
  (2019), pp.~151--165.

\bibitem{Yu2011}
{\sc K.~Yu, X.~J. Xin, and K.~B. Lease}, {\em A new adaptive integration method
  for the peridynamic theory}, Modeling and Simulation in Materials Science and
  Engineering, 19 (2011), p.~045003.

\bibitem{zhang2018accurate}
{\sc X.~Zhang, J.~Wu, and L.~Ju}, {\em An accurate and asymptotically
  compatible collocation scheme for nonlocal diffusion problems}, Applied
  Numerical Mathematics, 133 (2018), pp.~52--68.

\end{thebibliography}

\end{document}